\DeclareMathOperator{\id}{\operatorname{id}}
\DeclareSymbolFont{AMSb}{U}{msb}{m}{n}
\DeclareMathSymbol{\N}{\mathbin}{AMSb}{"4E}
\DeclareMathSymbol{\Z}{\mathbin}{AMSb}{"5A}
\DeclareMathSymbol{\R}{\mathbin}{AMSb}{"52}
\DeclareMathSymbol{\Q}{\mathbin}{AMSb}{"51}
\DeclareMathSymbol{\I}{\mathbin}{AMSb}{"49}
\DeclareMathSymbol{\C}{\mathbin}{AMSb}{"43}
\newcommand{\rr}[1]{\textcolor{red}{#1}}
\newcommand{\bb}[1]{\textcolor{blue}{#1}}
\newcommand{\bfsr}{\textcolor{red}{\bf r\ }}
\newcommand{\bfr}{\textcolor{red}{{\bf r}}}
\newcommand{\bfb}{\textcolor{blue}{{\bf b}}}
\newcommand{\bfk}{\textcolor{black}{{\bf k}}}
\newtheorem{thm}{Theorem}[section]
\title{On the inner structure of a permutation:\\
Bicolored Partitions and Eulerians\\ 
Trees and Primitives}
\author{Adrian Ocneanu}
\begin{document}
\maketitle

\begin{abstract}
We present a bijective algorithm with which an arbitrary permutation decomposes canonically into elementary blocks which we call families, which are sets with a specified number of ascents and descents.  We show that families, arranged in an arbitrary order in a sequence, are in bijection with permutations.  

The permutation decomposes canonically, by inserting parentheses, into a tree having as nodes a class of permutations which we call primitive.  Primitive permutations can be assembled from very simple data. 

The data for the trees into which a permutation decomposes can be written in a form similar to the decimal classification of a library. We axiomatize that data. It has a structure very different from the permutation which it encodes, with shuffles and pairings instead of reorderings. These structures are similar to the fundamental processes in quantum field theory.

Our main bijective structure algorithm gives explicit, additive multinomial formulae for the number of permutations with given sets of elements under and over the diagonal, or with given ascent and descent values.  

The multinomial expressions obtained this way give a new class of bicolored set statistics, between set partitions and set compositions, called shifted multinomials.  These provide for the first time additive multinomial expressions for Eulerian numbers and derangements, as part of a sequence of new combinatorial objects.  

These multinomial expressions satisfy inductive relations involving only immediate neighbors, similar to the relations satisfied by the Eulerian numbers.
 
\end{abstract}

\section{Preintroduction}

Much of this paper is based on the structure of a {\bf family}.  There are different equivalent ways to look at it.  First, a family $S$ is a set, together with a specified number $k$ ascent values, or reds, and $l$ descent values, or blues, with $k,l\ge 1$, and $k+1=\vert S\vert$.  The information in the bicoloring of $S$ can equivalently be given by making $S$ into a sequence $(r_1,r_2\ldots,r_k,b_l,b_{l-1},\ldots,b_1)$, where $b_1<b_2<\ldots b_l<r_1<r_2<\ldots<r_k$, and $\{r_1,\ldots,r_k\}$ are the reds, the elements colored red in the set $S$, and $\{b_1,\ldots,b_l\}$ are the blues, the elements colored blue. We put both kinds of information together by viewing $S$ as a colored sequence.

\begin{center}
\includegraphics[width=1.0\linewidth]{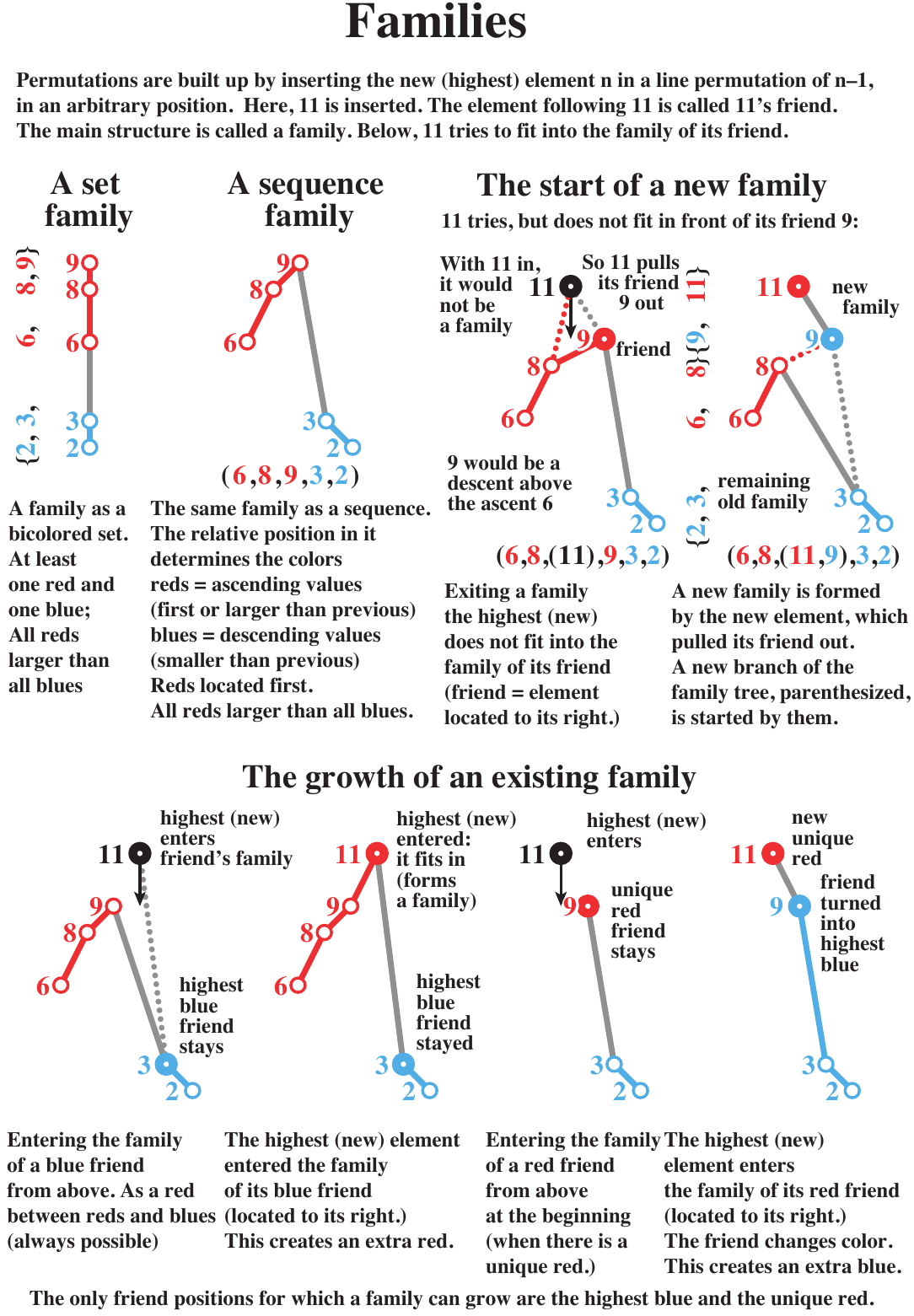}
\end{center}

The main function of a family $S$ is dynamical. We shall build up permutations of $\overline{n}=\{1,\dots,n\}$ in the manner of Euler, by inserting the new element $n$ into the line form of a permutation of $n-1$, on one of the $n$ positions available. Unless the new $n$ is at the end, where it would become a singlet, $n$ is followed in the line permutation by an element
$m$, which we call $n$'s \textbf{friend}. The friend $m$ is in  a family $S$, and we try to see whether $n$ fits into that family, positioned immediately before $m$. 

Either $S$ with $n$ inserted in front of $m$ is a family, in which case we say that $n$ fits in $S$, or else $n$ pulls out its friend $m$, and they start a new family $(\textbf{\rr{n}},\textbf{\bb{m}})$.  If $n$ is inserted at the end of $S$, it forms a new singlet family $(\textbf{n})$.

This is the main dynamics of the inner structure of a permutation.  The insertion of new elements is repeated, as the permutation is built, by inserting new, largest elements, into its line description.  The tree of possibilities in this process, enlarging families and shrinking them as they divide to create new families, is extremely complex, and gives rise to several tree and shuffle type structures.  Several algorithmic methods, which are the main results of this paper, show that the dynamics of family growth and division can be controlled by simple data, or reversed to yield bijective structure results on permutations.

The processes involved are similar to creation and annihilation operators in particle physics, or cell growth processes in biology.  

In the paper which follows, we shall study the inner structure of a single permutation.  Composing two permutations gives very rich structures, not studied here.

\section{Introduction} 
Permutations are central to the study of symmetry in mathematics and physics.  There are many interesting algebraic structures on the linear combinations of permutations.  The most remarkable of these is the block decomposition of their convolution algebra, namely the study of their representations.  This has been an active subject of research for over a century, and gave birth to modern representation theory. 

In contrast to the above, we shall concentrate in what follows on a new internal structure of each individual permutation.  Leonhard Euler found the recurrence relations on the number of permutations with a given count of ascents and descents, numbers which are now called the Eulerian.  These have found applications as numbers and operators in many areas of mathematics and physics.  After three centuries of study of permutations with given numbers $a$ of ascents and $b$ of descents, there appear to be no simple additive combinatorial formulae in terms of $a$ and $b$.  Such an expression would correspond to a construction of a permutation from a list of subsets involving $a$ and $b$.  

The central result of our study is such a formula.  The theory behind it is a way to decompose an arbitrary permutation of $\{1,\dots,n\}$ into an ordered sequence of mutually disjoint subsets $S_1 \cup \ldots \cup S_n$ in which for every $S_i$, which we call a family, we specify a number $a_i\in \{1,\ldots, \vert S_i\vert -1\}$ of ascent values, with the remaining number $b_i=\vert S_i\vert - a_i$ counting descent values.  The algorithm is bijective and provides an explicit way of building the permutation out of the list $(S_1,\ldots, S_k)$ which we call a registry of families. The numbers not used in the registry become singlet blocks.  A transformation, called the cycle transform, sends a permutation with given ascent values, descent values and singlets into a permutation with the same values which are over, under, and respectively on the diagonal.  Our algorithm gives thus an explicit construction of the permutations in which such sets, not only their cardinality, are specified. 

We show that any permutation has a canonical forest structure, in which nodes of trees consist of a special class of permutations, which we call primitive.  A primitive permutation consists of an active family, with ascent and descent values as described before, together with insertion points, called buds, for higher branches.  For the tree structure of a permutation, primitive permutations play a role analogous to prime numbers with the commutative multiplication replaced by a tree structure.

We give a bijective algorithm for constructing directly the primitive permutations from simple combinatorial data such as a sequence of positive and negative numbers -- or equivalently a Dyck path with decorated descents -- together with the desired number of ascents of the active family.  The explicit nature of the data leads to a differential equation with which we compute the generating function for the number of primitive permutations with an active family of size $m$ and with  $k$ buds.

The data for primitive permutations, which are the nodes of the tree structure of a permutation, is assembled in a form similar to the decimal classification of a library, which we call a decimal code of a permutation. The decimal code, which we axiomatize and show to be equivalent to the permutation data, replaces the change of order in the permutation with creation - annihilation pairing data and shuffling.

The description of permutations in terms of sets with a given number of ascents and descents gives rise to a study of partition of sets into bicolored subsets.  While such partitions were first studied half a century ago, we obtain for the first time Eulerian type inductive relations for these, involving only neighboring terms, as well as a new type of expressions for their count, as sums of shifted multinomials.  Such expressions interpolate between bicolored partitions and compositions.

The decomposition of a permutation into a forest is constructed inductively by inserting the new highest element in the line description of a permutation, in the manner of Euler.  The growth of the forest in this process has similarities with biological growth patterns. Small nodes, or families, mostly grow while larger nodes tend to shrink in size due to a process of  reproductive division.  This process accounts for the extraordinary complexity of the end result of the forest and node structure as well as for the connections to particle creation and annihilation of particles in quantum field theory, as well as to possible connections with mathematical biology. 

\subsection{The library metaphor} We are in a big library, in which books are labeled by their date of entry. A new book is put in a random position every day by a careless librarian assistant.

The books are arranged on shelves, some, on the left, leaning left, and some, on the right, leaning right, at least one on each side.  On both sides, numbers should increase toward the center of the shelf, and all books on the right must be older than all books on the left.  As the new, latest book, is put in a random position on a random shelf, there are two cases.  The new book may fit on the shelf, and this happens in two cases.  Either the book is placed in the middle, in which case being newest, it will lean left.  Or, there is only one book on the left and the new book is put against the left wall.  Then, the former single left book must lean right.  In all other cases, the new book does not fit.  Then, it picks the book immediately following it, and with them a new shelf is started.  There are now two ways to proceed, organizing the whole library.  In the first, the new shelf and the remains of the previous one are arranged in the library in a very elaborate way.  That allows, with an intricate algorithm, to figure out the exact history of the library, just from the positions and contents of the shelves.  Moreover, every shelf arrangement corresponds to a history.  This is our main bijective algorithm.

Another system is to keep track of the books that were taken away and repositioned, by leaving markers for each pair which was taken away, and by assigning these books on their new shelf a longer decimal number.  The list of these decimal numbers is in mostly ascending order, but has books from different shelves shuffled relative to each other, and keeps track of the pairings between the books which started a new shelf.  This is a decimal code for our library, and again it encodes the whole library structure.  In this system, which is very far from the original permutation, we can think of the pair of books which went missing from a shelf, in a grand way, as an annihilation in a universe.  As they reappear on their own new shelf, they form a new universe.  And indeed, the modern type of library, which is the online encyclopedia of integer sequences, shows that in the simplest case, when universes disappear without creating new ones, on a unique shelf with only two books left on it, the number of possible histories matches one to two the number of indecomposable fermionic Feynman diagrams with a given number of creation -- annihilation pairs.

\begin{center}
\includegraphics[width=6.1 in]{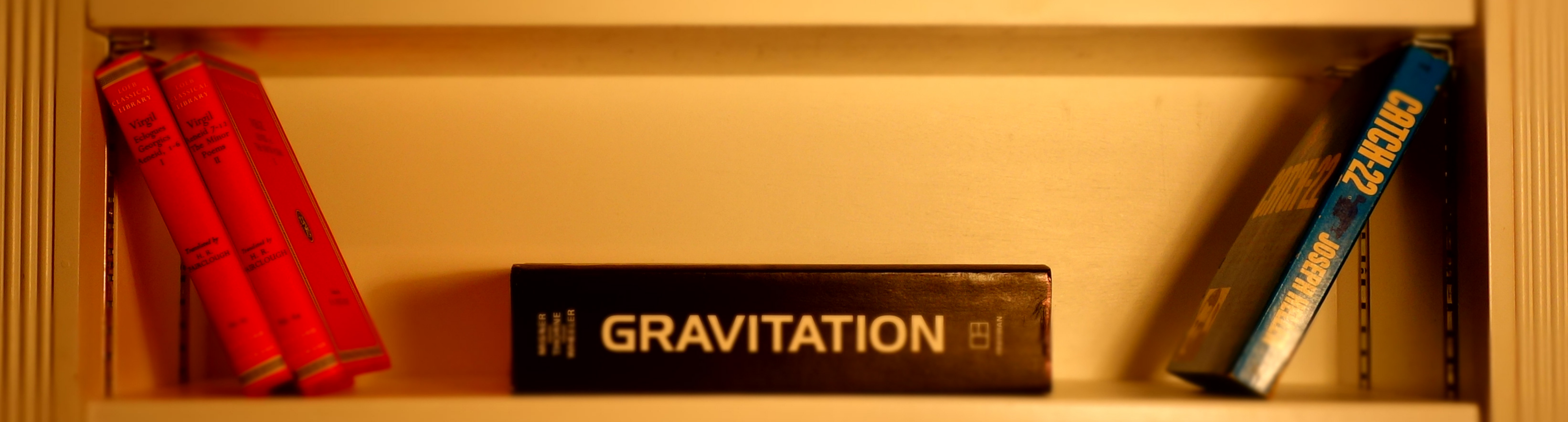}
\end{center}

\section{Main theorem: permutations with prescribed ascent and descent values}

\subsection{The cycle transform}

Let $\pi$ be a permutation of $\{1,\ldots,n\}$.  Extend it with zero at the beginning, i.e., $\pi(0)=0$.  A position $i$ is called an ascent  (position) if $\pi(i-1)<\pi(i)$, and a descent (position) otherwise.  If $\pi(i-1)<\pi(i)$, the value $\pi(i)$ is called an ascent value, else $\pi(i)$ is a descent value.  If $\pi(i)>i$, then $i$ is called an over diagonal position and $\pi(i)$ an over diagonal value.  If $\pi(i)<i$, then $i$ is called an under diagonal position and $\pi(i)$ an under diagonal value.  Elements with $\pi(i)=i$ are called on diagonal.  Note that $\pi$ maps over/under positions into over/under diagonal values.  That is not the case with ascent/descent values and positions.  Ascents and descents are defined between adjacent positions, while the images of ascents and descents are not adjacent to each other.

For the permutation $\pi$, consider the absolute minimum value 1, with $\pi(i_1)=1$.  Let $m_1=1$, and let $m_2=\text{min}_{i>i_1}\pi(i)$, with $\pi(i_2)=m_2$.  Continue similarly the sequence of \textbf{successive minima} $m_1<m_2<m_3<\ldots$, and the corresponding successive minima positions $i_1<i_2<i_3<\ldots$, with $\pi(i_k)=m_k$.  We call the intervals $\{1,\ldots, i_1\}, \{i_1+1,\ldots,i_2\}, \ldots$ the  \textbf{blocks} of the permutation, with $i_1,i_2,\ldots$ the block \textbf{anchors}, and $m_1, m_2,\ldots$ the {\bf block anchor values}.  A block consisting of a single element will be called a {\bf singlet}.  The similarly defined successive maxima, not used here, are called salients in the literature.

A remarkable transformation was discovered half a century ago by Alfr\'{e}d R\'{e}nyi, and called an ``unusual transformation'' by Donald Knuth.  It is the {cycle transform}, and is defined as follows.

Decompose a permutation $\sigma$ into cycles, arranged so that the minimum of each cycle is on the last position, and so that these minima are in ascending order.  Write now the list of elements of these cycles, and read it as a line permutation $\pi$.  For instance, if $\sigma$, written as a line permutation, is $524361$, then its cycles ordered as above are $(561)(2)(43)$.  The cycle transform $\pi$ is $561243$ in line notation, with successive minima $1,2,3$, and blocks $561,2,43$, which are the cycles of $\sigma$.  $\sigma$ has $4,5,6$, respectively $2$, respectively $1,3$, as over/on/under diagonal values, which are also the ascent values, singlet blocks, and, respectively, descent values of $\pi$.

We call the permutation $\pi$ the \textbf{cycle transform} of $\sigma$.  Remark that the minima of cycles of $\sigma$ become the successive minima of $\pi$, and the cycles of $\sigma$ are the blocks of $\pi$, as defined above.  $\sigma$ is the {\bf inverse cycle transform} of $\pi$, and is obtained by interpreting each block between successive minima positions of $\pi$ as a cycle of $\sigma$.

The on diagonal elements of $\sigma$ are the singlet blocks of $\pi$.  The over/under diagonal values in $\sigma$ are the ascent/descent values of $\pi$, which are not singlets.  Depending on the problem studied, it is sometimes convenient to view singlets as ascents, while in other applications they are set apart in statistics.

The structure studied in what follows will concern the ascent/descent values and the blocks between successive minima of a permutation $\pi$.  We shall use the inverse cycle transforms of these to over/under diagonal values and cycles only for the interpretation of the results. The cycle transform as a three dimensional picture is shown below.

\begin{center}
\includegraphics[width=1.0\textwidth]{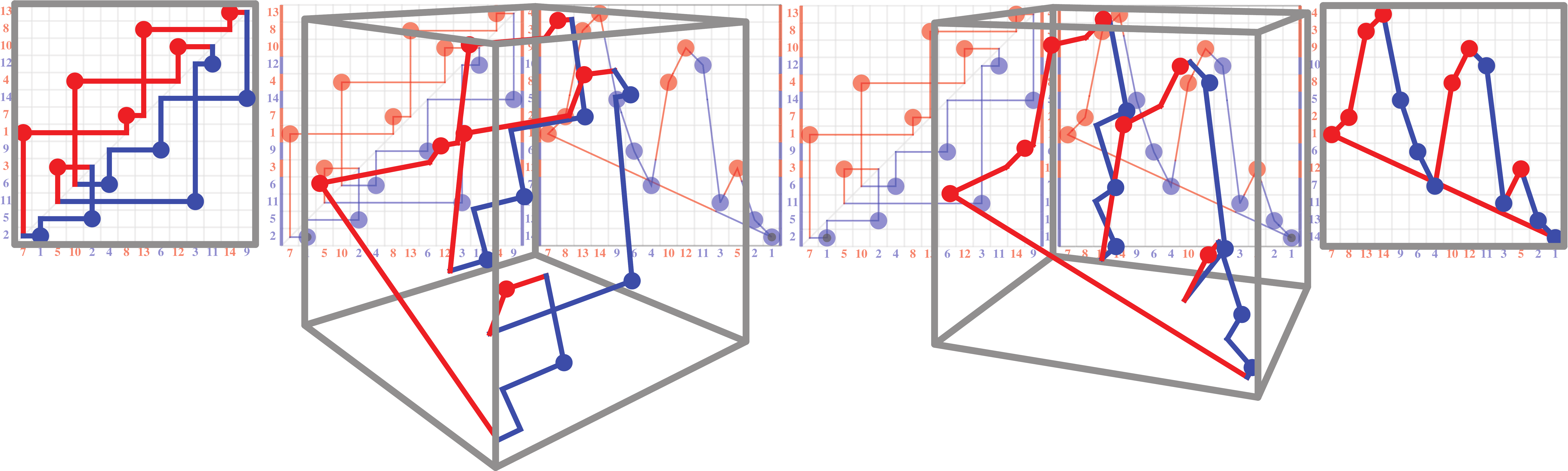}
\end{center}

\subsection{Derangements, Eulerians, and Eulerian lift}

We call a permutation $\sigma$ a \textbf{derangement} if it has no diagonal elements, or cycles of length one.  We denote by $D_{a,b}$ the number of derangements with $a$ values over diagonal and $b$ values under diagonal.  The cycle transform sends these derangements into permutations without singlet blocks, with $a$ ascent values and $b$ descent values.  

Extend a permutation $\pi$ with zeroes to  $(0,\pi(1),\pi(2),\ldots,\pi(n),0)$.  Euler studied the number of permutations with a given number $a$ of ascent adjacent pairs and a number $b$ of descent adjacent pairs in that sequence.  The number of such permutations are the Eulerian numbers, denoted in the literature by $A(a+b,a)$, and here by $E_{a,b}$, with $\sum_{a+b=n+1}E_{a,b}=n!$.  

Given a permutation $\pi$ of $\{1,\ldots,n\}$ with $a$ ascent values, including the singlet blocks, and $b$ descent values, as counted by Euler, we define a new permutation $\tilde{\pi}$ of $\{1,\ldots,n+1\}$, which we call the Eulerian lift of $\pi$, by letting $\tilde{\pi}(i)=\pi(i)+1$ for $i\le n$ and $\tilde{\pi}(n+1)=1$.  Note that $\tilde{\pi}$ has a single block, and the number of its ascent and descent values is the same as the number of ascent and descent pairs counted by Euler.  For instance, $\pi=312$ has two ascent pairs $0312$ and two descent pairs $31,20$ in $03120$.  Its Eulerian lift is $\tilde{\pi}=4231$, and has in $04231$ ascent values $4,3$ and descent values  $2,1$.  The inverse cycle transform of $\tilde{\pi}$ is the cycle $(4,2,3,1)$, written $4312$ as a line permutation.  It has $4,3$ as over diagonal values, and $1,2$ as under diagonal values.

The transformation which corresponds to the Eulerian lift by the inverse cycle transform is {\bf daisy chaining}. Let $\sigma$ be a permutation of $\{1,\dots,n\}$ and let $c_1,\dots,c_k$ be the list of its cycle heads, the smallest elements in each cycle, arranged increasingly. Let $\sigma'$ denote the permutation of  $\{0,1,\dots,n\}$ defined by $\sigma' c_i = \sigma c_{i+1}$ for $i<k$, $\sigma' c_k = 0$, $\sigma' 0 = \sigma c_1$ and $\sigma' j = \sigma j$ otherwise.  Now lift $\{0,\dots,n\}$ to $\{1,\dots,n+1\}$ to obtain $\tilde{\sigma}$. Daisy chaining puts in bijection permutations of $\overline n$ with single cycle permutations of $\overline {n+1}$.

The Eulerian numbers $E_{k,l}$ count thus the number of permutations of single cycle permutations with $k$ elements over and $l$ elements under the diagonal.

\subsection{Patterns and shuffles}

In the sequel, we shall make use of \textbf{patterns and shuffles.} For a finite $F\subset\R$, an injective function $f:F\to \R$ has a {\bf pattern} which is a permutation of $(1,\dots,|F|)$, obtained by composing $f$ on both sides with order preserving maps. 

Given an injective function $f:F\to \R$ on a finite $F\subset\R$ we call $f$ a shuffle of $f_1,\dots,f_n$ if there is a partition $F=\bigcup_i F_i$ so that the restriction of $f$ to any $F_i$ has the same pattern as $f_i$. 

More generally, for an injective function $f:F=\bigcup_i F_i\to \R$ with $F_i$ not necessarily mutually disjoint, we call $f$ a {\bf shuffle of  $f_1,\dots,f_n$  along $(F_1,\dots,F_n)$} if the restriction of $f$ to any $F_i$ has the same pattern as $f_i$.

\subsection{Families and registries}

In what follows, we shall develop structural results which will put a permutation in bijection with a \textbf{composition}, an ordered partition, of part of its positions into bicolored sets, called families, with the remaining elements singlet blocks.  The algorithm will construct explicitly, and thus count, permutations with given sets of ascent/descent values, or equivalently, via the inverse cycle transform, of given sets of over/under diagonal values or over/under diagonal positions.

Recall that a \textbf{set family} $S$ is a subset of $\{1,\ldots,n\}$ with a color, red or blue, assigned to each element, with $r\in\{1,\ldots,\vert S\vert -1\}$ \textbf{reds}, and \textbf{blues}, such that any red element is larger than any blue element.  Thus, a set family is completely determined by its underlying set $S$ with $\ge 2$ elements, together with the number $r\in\{1,\ldots,\vert S\vert-1\}$ of reds.

A \textbf{sequence family} is a sequence of integers $(a_1,\dots,a_k,b_l,\dots,b_1)$ with $k,l \ge 1$ and $b_1<b_2<\cdots<b_l<a_1<a_2<\cdots<a_k$.  The elements $r_1,r_2,\dots,r_k$ are called ascent values, or reds, and the elements $b_1,b_2,\dots,b_l$ are called descent values, or blues, and we color the sequence as $(\rr{a_1},\dots,\rr{a_k},\bb{b_l},\dots,\bb{b_1})$. Note that the reds and the blues in a nonsinglet sequence family are completely determined by their relative position in the sequence, and all reds have values larger than all blues. Thus the subjacent bicolored set of a sequence family is a set family. 

Conversely the set family $\{\bb{b_1},\dots,\bb{b_l},\rr{a_1},\dots,\rr{a_k}\}$ with $b_1<\cdots<b_l<a_1\cdots<a_k$ determines the sequence family $(a_1,\dots,a_k,b_l,\dots,b_1)$, or $(\rr{a_1},\dots,\rr{a_k},\bb{b_l},\dots,\bb{b_1})$. We shall consider all these as set and sequence forms of a same family.

Given a subjacent set $S$ with $\ge 2$ elements, there is precisely one family with a given number $r\in\{1,\ldots,\vert S \vert-1\}$ reds.

We shall also sometimes consider singlets as families (colored black in the pictures).

A \textbf{family registry} is an ordered sequence of mutually disjoint set families. 
For various purposes, we may skip the singlet families from registries.

{\bf Examples}

$S=\{2,3,5,6,7\}$ colored with 2 reds is the set family $\{\bb{2},\bb{3},\bb{5},\rr{6},\rr{7}\}$, and the sequence family $(6,7,5,3,2)$ colored as $(\rr{6},\rr{7},\bb{5},\bb{3},\bb{2})$, or $\rr{6}\rr{7}\bb{5}\bb{3}\bb{2}$.

A set with four elements $\{1234\}$ can be organized as a family in 3 ways:
$\rr{4}\bb{3}\bb{2}\bb{1}$, 
$\rr{3}\rr{4}\bb{2}\bb{1}$, 
$\rr{2}\rr{3}\rr{4}\bb{1}$.  
These are all the patterns of families with four elements.

The colored sequences \rr{58}\bb{431} and \rr{7}\bb{52} and 9 are families while \rr{58}\bb{341}, \rr{58}\bb{631}, \rr{75}\bb{2} and \rr{58} and \bb{531} are not.

\section{The main counting theorem}

Our main permutation construction result is the following.  
\begin{thm}

The permutations with a given set $A$ of elements over the diagonal, a given set of elements $B$ under the diagonal and the rest on the diagonal are in bijection with the number of compositions $A=A_1\cup \dots \cup A_k$, $B=B_1\cup \dots\cup B_k$ with $\min A_i >\max B_i$ for all $i$.

\end{thm}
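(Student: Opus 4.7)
My plan has three parts: reducing the theorem through the cycle transform, constructing the bijection by an Euler-type inductive insertion, and handling the subtle bookkeeping that makes the bijection faithful.

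First I would translate via the inverse cycle transform. By the properties recalled in the previous subsections, permutations $\sigma$ of $\{1,\dots,n\}$ with over-diagonal value set $A$, under-diagonal value set $B$ and on-diagonal complement are in bijection with permutations $\pi$ whose non-singleton ascent values form $A$, whose descent values form $B$, and whose singleton blocks form the complement. On the combinatorial side, a composition $A=A_1\cup\dots\cup A_k$ and $B=B_1\cup\dots\cup B_k$ with $\min A_i>\max B_i$ for all $i$ is precisely a family registry whose $i$-th family has reds $A_i$ and blues $B_i$, since the inequality is exactly the set-family condition that all reds exceed all blues. So the problem reduces to bijecting such $\pi$ with such registries.

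Second I would construct this bijection inductively, following the dynamics sketched in the Preintroduction. Starting from $\pi_1=(1)$ with the empty registry, insert the elements $2,3,\dots,n$ in increasing order; for each new maximum $m$ placed in front of its friend $f$ (or appended as a singlet), apply one of three family moves: (i) $m$ at the end becomes a singlet, leaving the registry unchanged; (ii) $m$ fits in the family of $f$, enlarging that family in the registry by a new red or by converting reds into blues in the controlled way permitted by the family sequence form; (iii) $m$ does not fit, in which case $m$ pulls $f$ out, the residue of $f$'s old family is updated, and the new two-element family $(\textbf{\rr{m}},\textbf{\bb{f}})$ is appended to the registry. Each move visibly preserves the family condition $\min A_i>\max B_i$ because any red added is strictly the current maximum and any blue demoted from a red is constrained by the canonical family sequence form. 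The inverse map removes the current maximum and undoes the move indicated by the most recently modified family, and induction closes the loop.

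The main obstacle will be the faithful bookkeeping of move (iii). A direct block-level reading of families from $\pi$ is not bijective: distinct permutations can share the same block-level bicolored content, so the registry must carry additional ordering and split-history data produced by the dynamical build. Ensuring that after the splitting move the residual fragment of the disrupted family remains a valid family (or dissolves correctly into singlets reabsorbed into the background), that its line position is tracked so that the reverse procedure reconstructs $\pi$ uniquely, and that an arbitrarily prescribed registry order on arbitrary families is attained by exactly one permutation, is the central combinatorial difficulty. Resolving this is what turns the theorem from an equinumerous counting statement into the explicit bijective structure algorithm that the paper advertises.
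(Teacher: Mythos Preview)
Your outline matches the paper's approach: reduce via the cycle transform to ascent/descent values, then build a bijection between permutations and registries by Euler-style insertion of the current maximum, with the three cases you list. You have also correctly located where the real work lies.

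However, there is a genuine gap, and it is precisely at the point you flag as the ``main obstacle.'' Your concrete suggestion for case~(iii)---that the new two-element family $(\rr{m},\bb{f})$ be \emph{appended} to the registry while the residual family is ``updated''---does not yield a bijection. If you simply append, the reverse map cannot recover which family $f$ came from, nor can an arbitrary registry order be attained. The paper resolves this with two ingredients you are missing. First, an \emph{entry/exit lemma}: the positions in a family from which a friend can be extracted are exactly those reached by a ``regular entry'' from outside, so extraction and reinsertion are mutually inverse at the level of a single family. Second, a \emph{sliding rule} for placing the two families produced in case~(iii): depending on whether the extracted friend was the anchor (minimum) of its family, either the residual family or the new pair slides leftward past families with larger anchors until it hits a wall; the other stays in the old family's slot. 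This sliding encodes, in the registry's linear order itself, exactly the information the reverse algorithm needs: the orphan, after removing the maximum, can tell from whether it can slide left whether to look left or right for its old family, and the entry/exit lemma guarantees it reenters on the correct position. Without these two mechanisms your inductive step is not invertible, and your final paragraph is an acknowledgment of this rather than a resolution. Your inverse description---``undoes the move indicated by the most recently modified family''---presupposes access to history that the registry does not carry unless the sliding rule is in force.
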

{\bf An example} 

Consider the subsets $A=\{2,4,5\}$ and $B=\{1,3\}$ of $\overline 5 = \{1,2,3,4,5\}$.   We want to construct all the permutations with $A$ as ascent values and $B$ as descent values, or equivalently all the permutations with $A$ as over diagonal values and $B$ as under diagonal values, or equivalently  all the permutations with $B$ as under diagonal positions and $A$ as under diagonal positions.

Consider the compositions $A=\cup_{j\in J} A_j$,  $B=\cup_{j\in J} B_j$ with $\min A_j>\max B_j$ for all $j$. 

There are 6 such compositions, written as vectors
$$\begin{matrix}{245}\\{13}\end{matrix} : 
\left(\begin{matrix}{2}\\{1}\end{matrix}\right)\cup\left(\begin{matrix}{45}\\{3}\end{matrix}\right) = 
\left(\begin{matrix}{45}\\{3}\end{matrix}\right)\cup\left(\begin{matrix}{2}\\{1}\end{matrix}\right) = 
\left(\begin{matrix}{24}\\{1}\end{matrix}\right)\cup\left(\begin{matrix}{5}\\{3}\end{matrix}\right) = 
\left(\begin{matrix}{5}\\{3}\end{matrix}\right)\cup\left(\begin{matrix}{24}\\{1}\end{matrix}\right) = 
\left(\begin{matrix}{25}\\{1}\end{matrix}\right)\cup\left(\begin{matrix}{4}\\{3}\end{matrix}\right)= 
\left(\begin{matrix}{4}\\{3}\end{matrix}\right)\cup\left(\begin{matrix}{25}\\{1}\end{matrix}\right).$$
A pair $A_j>B_j$ forms a set family $A_j\cup B_j$ with $A_j$ as reds and $B_j$ as blues.
The families given by the compositions are arranged as registries, i.e. ordered lists of families, as follows.
$$
((\rr{2}\bb{1}),(\rr{4}\rr{5}\bb{3})),\  
((\rr{4}\rr{5}\bb{3}),(\rr{2}\bb{1})),\ 
((\rr{2}\rr{4}\bb{1}),(\rr{5}\bb{3})),\ 
((\rr{5}\bb{3}),(\rr{2}\rr{4}\bb{1})),\ 
((\rr{2}\rr{5}\bb{1}),(\rr{4}\bb{3})),\ 
((\rr{4}\bb{3}),(\rr{2}\rr{5}\bb{1})).
$$
By our reverse algorithm these registries are mapped into 6 permutations $\pi$ of $\overline 5$ with $A=\{\rr{2},\rr{4},\rr{5}\}$ as ascent values and $B=\{\bb{1},\bb{3}\}$ as descent values, respectively (using the block algoritm).$$
\rr{2}\bb{1}\rr{4}\rr{5}\bb{3}, 
\rr{2}\rr{4}\rr{5}\bb{3}\bb{1}, 
\rr{2}\rr{4}\bb{1}\rr{5}\bb{3}, 
\rr{2}\rr{5}\bb{3}\rr{4}\bb{1}, 
\rr{2}\rr{4}\bb{3}\rr{5}\bb{1}, 
\rr{2}\rr{5}\bb{1}\rr{4}\bb{3}
,$$

The blocks between successive minima of these are respectively

$$
(\rr{2}\bb{1})(\rr{4}\rr{5}\bb{3}), 
(\rr{2}\rr{4}\rr{5}\bb{3}\bb{1}),
(\rr{2}\rr{5}\bb{3}\rr{4}\bb{1}),  
(\rr{2}\rr{4}\bb{1})(\rr{5}\bb{3}), 
(\rr{2}\rr{5}\bb{1})(\rr{4}\bb{3}), 
(\rr{2}\rr{4}\bb{3}\rr{5}\bb{1})
.$$ 
These blocks give the cycles of the inverse cycle transform. Written in line notation, the inverse cycle transforms are

$$
\rr{2}\bb{1}\rr{4}\rr{5}\bb{3}, 
\rr{2}\rr{4}\bb{1}\rr{5}\bb{3},
\rr{2}\rr{5}\rr{4}\bb{1}\bb{3},
\rr{2}\rr{4}\rr{5}\bb{1}\bb{3},
\rr{2}\rr{5}\rr{4}\bb{3}\bb{1},
\rr{2}\rr{4}\rr{5}\bb{3}\bb{1}.
$$ 
In these, the elements $\{\rr{2},\rr{4},\rr{5}\}$ of $A$ and respectively $\{\bb{1},\bb{3}\}$ of $B$, are the values over, respectively under, the diagonal, as desired. 

We now replace each permutation $\sigma$ above by $\sigma^{-1}$. This will make $A=\{\bb{2},\bb{4},\bb{5}\}$ and $B=\{\rr{1},\rr{3}\}$ the sets of positions mapped under $(\bb{u})$ respectively over $(\rr{o})$ the diagonal. That is, we obtain the 6 permutations with the pattern $(\rr{o}\bb{u}\rr{o}\bb{u}\bb{u})$. 
$$
\rr{2}\bb{1}\rr{5}\bb{3}\bb{4},
\rr{3}\bb{1}\rr{5}\bb{2}\bb{4},
\rr{4}\bb{1}\rr{5}\bb{3}\bb{2},
\rr{4}\bb{1}\rr{5}\bb{2}\bb{3},
\rr{5}\bb{1}\rr{4}\bb{3}\bb{2},
\rr{5}\bb{1}\rr{4}\bb{2}\bb{3}
$$ 

In our study of the structure of a permutation, we shall decompose it with nested parentheses, such that the outermost parentheses are its blocks (between successive minima positions).  The contents of each parenthesis, in which the included parentheses and their contents are removed, form a sequence family.  This forest structure, in which each block is a tree, is canonical, and is called the parenthesized form of the permutation.

The parenthesized forms of the permutations $\pi$, which refine their block decompositions as written before, are respectively

$$
(\rr{2}\bb{1})(\rr{4}\rr{5}\bb{3}), 
(\rr{2}(\rr{4}\rr{5}\bb{3})\bb{1}),
(\rr{2}(\rr{5}\bb{3})\rr{4}\bb{1}),  
(\rr{2}\rr{4}\bb{1})(\rr{5}\bb{3}), 
(\rr{2}\rr{5}\bb{1})(\rr{4}\bb{3}), 
(\rr{2}(\rr{4}\bb{3})\rr{5}\bb{1})
.$$ 

The above parenthesized forms show that the compositions of the ascent/descent sets $A, B$ from which we started, written as sequence families
$$
\{\rr{2},\rr{4},\rr{5},\bb{1},\bb{3}\} = 
(\rr{2}\bb{1})\cup(\rr{4}\rr{5}\bb{3}) =   
(\rr{4}\rr{5}\bb{3})\cup(\rr{2}\bb{1}) =  
(\rr{2}\rr{4}\bb{1})\cup(\rr{5}\bb{3}) =  
(\rr{5}\bb{3})\cup(\rr{2}\rr{4}\bb{1}) =  
(\rr{2}\rr{5}\bb{1})\cup(\rr{4}\bb{3}) = 
(\rr{4}\bb{3})\cup(\rr{2}\rr{5}\bb{1})
$$
appear, very transparently, in the very body of the corresponding permutations, inside the parentheses. 

Our direct algorithm gives, starting from these permutations, an ordering of the families which appear in parentheses as a registry of families. This way we recover the compositions of $A,B$ from which we started.

\section{Bicolored partitions and their Eulerian type recurrence relations}

We shall count Eulerian numbers $E_{a,b}$, derangements $D_{a,b}$, and similar classes of bicolored permutations, by a new type of statistics of bicolored sets, which we now describe.

Let $k,l\in\{1,2,\ldots\}$.  Decompose as vectors $(k,l) = (k_1,l_1)+\dots+(k_r,l_r)$ into an (unordered) partition,  with $k_i,l_i\in\{1,2,\dots\}$. Let $r_1,...,r_p$ be the number of identical parts of each type, with $r=r_1+\dots+r_p.$  

The multinomial sum 
$$\sum \frac{(k+l)!}{(k_1+l_1)!\dots(k_r+l_r)!}$$ 
counts the decompositions of the set $\{1,\ldots,k+l\}$ into $r$ subsets of sizes $k_1+l_1,\ldots,k_r+l_r$.

The sum 
$$N^{[r]}_{k,l}=\sum \frac{(k+l)!}{(k_1+l_1)!\dots(k_r+l_r)!}\frac{1}{r_1!\dots r_p!}$$
counts the corresponding set partitions, i.e. the decompositions in which the order of the subsets is neglected.  Note that subsets with the same cardinality $k_i+l_i=k_j+l_j$, in which the set labels $(k_i,l_i)$, $(k_j,l_j)$ are different are counted as different in the partition.  
We call the numbers $N^{[r]}_{k,l}$ the \textbf{fundamental $r-$part multinomials}. 

If we now take 
$$r!N^{[r]}_{k,l}=\sum \frac{(k+l)!}{(k_1+l_1)!\dots(k_r+l_r)!}\frac{r!}{r_1!\dots r_p!},$$
we count the above as set compositions, or ordered partitions.  More generally, we shall count shifted partitions, defined by 
$$ N_{k,l}^{(s)} = \sum_{r\ge\max(-s,1)}(r+s)!N_{k,l}^{[r]} = \sum_{r\ge\max(-s,1)}\sum \frac{(k+l)!}{(k_1+l_1)!\dots(k_r+l_r)!}\frac{(r+s)!}{r_1!\dots r_p!}.$$
Here $s\in\{\ldots,-1,0,1,\ldots\}$ is a \textbf{shift}, and we take negative factorals to be zero.  We call the numbers $N^{(s)}_{k,l}$ the \textbf{shifted multinomials of shift $s$.}

Our main enumeration result is the following.  

\begin{thm}
The Eulerian numbers have the multinomial expression 
$$E_{k,l}=N^{(-1)}_{k,l} = \sum \frac{(k+l)!}{(k_1+l_1)!\dots(k_r+l_r)!}\frac{(r-1)!}{r_1!\dots r_p!}.$$
The derangements have the expression 
$$D_{k,l}=N^{(0)}_{k,l} = \sum \frac{(k+l)!}{(k_1+l_1)!\dots(k_r+l_r)!}\frac{r!}{r_1!\dots r_p!},$$
summing  over the unordered partitions $(k,l)=(k_1,l_1)+\cdots+(k_r,l_r)$, with $k_i,l_i\in\{1,2,\ldots,\}$, $r_1,\ldots,r_p$ the number of identical parts of each type, and $r=r_1+\cdots+r_p$
\end{thm}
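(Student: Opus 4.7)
The plan is to apply Theorem 4.1 (the main bijective algorithm) together with the Eulerian lift and daisy chaining dictionary to reduce both enumerations to counts of ordered family registries partitioning $\{1,\ldots,k+l\}$, and then to organize those registries by the sizes of their families.

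For $D_{k,l}=N^{(0)}_{k,l}$: a derangement of $\{1,\ldots,k+l\}$ with $k$ values over and $l$ under the diagonal corresponds, via the inverse cycle transform, to a permutation $\pi$ of $\{1,\ldots,k+l\}$ with no singlet blocks, whose ascent values form a $k$-set $A$ and descent values an $l$-set $B$ partitioning $\{1,\ldots,k+l\}$. Summing the bijection of Theorem 4.1 over all such $A,B$ reduces $D_{k,l}$ to the number of ordered registries whose families partition $\{1,\ldots,k+l\}$, each family having at least one red and one blue. For a fixed unordered partition $(k,l)=(k_1,l_1)+\cdots+(k_r,l_r)$ with multiplicities $r_1,\ldots,r_p$, the number of ways to distribute ordered disjoint subsets of sizes $k_i+l_i$ among the $r$ family slots is $(k+l)!/\prod(k_i+l_i)!$---within each subset the top $k_i$ elements must be the reds and the bottom $l_i$ the blues---while the number of ordered arrangements of the $r$ parts is $r!/\prod r_j!$. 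Multiplying and summing over unordered partitions gives $N^{(0)}_{k,l}$.

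For $E_{k,l}=N^{(-1)}_{k,l}$: daisy chaining combined with the inverse cycle transform identifies $E_{k,l}$ with the number of single-cycle permutations of $\{1,\ldots,k+l\}$ with $k$ over and $l$ under the diagonal, equivalently single-block permutations $\pi$ with $k$ ascent and $l$ descent values. The element-assignment factor is unchanged, but we must now restrict to those registries whose reverse-algorithm image is single-block. The crux is the following claim: for every unordered size partition, exactly $1/r$ of the corresponding registries yields a single-block $\pi$, so only $(r-1)!/\prod r_j!$ of the $r!/\prod r_j!$ orderings of the family slots survive. Granting this, summing the product $(k+l)!/\prod(k_i+l_i)!\cdot(r-1)!/\prod r_j!$ over unordered partitions yields $N^{(-1)}_{k,l}$.

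The $1/r$ claim is the main obstacle. The worked example of Section 4 suggests a concrete description of the distinguished single-block orderings: these are precisely the registries whose first family contains the overall maximum $k+l$. A symmetry argument then gives the count: in a uniformly random registry with a fixed unordered size partition, the expected size of the first slot is $(k+l)/r$, so the maximum lies in the first slot with probability $1/r$, producing the required $(r-1)!/\prod r_j!$ distinguished orderings per unordered partition. To upgrade this observation into a proof that the specified orderings are \emph{exactly} the single-block ones, I would induct on $r$, using the inverse of Theorem 4.1's reverse algorithm: deleting the largest element $k+l$ either strips it off its family or undoes a $2$-element family split, and tracking how the single-block property interacts with this deletion should pin down the rule precisely. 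Conceptually, the factor $1/r$ is the combinatorial shadow of daisy chaining, which canonically splices any collection of cycles into one.
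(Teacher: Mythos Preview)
Your treatment of $D_{k,l}$ is correct and is exactly the paper's argument: the main bijection identifies permutations with no singlet blocks with stripped registries whose families partition $\{1,\dots,k+l\}$, and grouping these registries by their multiset of family types gives the composition count $N^{(0)}_{k,l}$.

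For $E_{k,l}$ your strategy is also the paper's --- identify single-block permutations with a distinguished $1/r$ fraction of the orderings --- but your concrete characterization is wrong, and this is a genuine gap. You conjecture that the single-block registries are those whose \emph{first} family contains the maximum $k+l$. A counterexample already appears in the paper's own worked example: the registry $((\rr{4}\bb{3}),(\rr{2}\rr{5}\bb{1}))$ maps to the single-block permutation $24351$, yet the maximum $5$ lies in the \emph{last} family; dually, $((\rr{2}\rr{5}\bb{1}),(\rr{4}\bb{3}))$ has $5$ in the first family but maps to the two-block permutation $25143$. (The paper's listing of permutations in that example has adjacent pairs transposed relative to the registries, which may have misled you; running the reverse algorithm by hand confirms the matching above.) Your probability heuristic --- expected size of the first slot is $(k+l)/r$, hence the maximum lands there with probability $1/r$ --- is also a non sequitur, though the $1/r$ conclusion happens to hold by direct symmetry among the $r$ orderings.

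The correct characterization, and the one the paper uses, is that the \emph{last} family contains the minimum $1$. This is not a guess to be verified by induction: it is an immediate consequence of the block-preservation property built into the bijection (see the Comments following the direct algorithm). The algorithm sends each block of $\pi$ to a registry block with the same support, in order; hence $\pi$ is single-block iff the registry is single-block, and by the definition of registry blocks via successive minima this means precisely that the global minimum $1$ sits in the last family. Once you have this, the $(r-1)!$ factor is trivial: for any unordered decomposition into $r$ families, exactly $(r-1)!$ of the $r!$ orderings place the family containing $1$ last. No probabilistic averaging or secondary induction is needed.
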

We shall show, with a method which relies on the decomposition of partitions into set families, the following Eulerian type recurrence relations, between the fundamental and shifted multinomials.
\begin{thm}
\begin{eqnarray*}
N_{k,l}^{[r]} &=& kN_{k,l-1}^{[r]}+lN_{k-1,l}^{[r]}+(k+l-1)(N_{k-1,l-1}^{[r-1]}-r N_{k-1,l-1}^{[r]}) \\
N_{k,l}^{(s)} &=& kN_{k,l-1}^{(s)} +lN_{k-1,l}^{(s)} +(k+l-1)(N_{k-1,l-1}^{[-s-1]}+(s+1)N_{k-1,l-1}^{(s)}).
\end{eqnarray*}
\end{thm}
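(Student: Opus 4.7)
My plan is to prove the first recurrence via a first-order partial differential equation for the exponential generating function of $N^{[r]}_{k,l}$, and then obtain the shifted recurrence by multiplying by $(r+s)!$ and summing over $r$.

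Because the coloring inside each family is determined by the values of its underlying elements (the $r$ largest are red), a single family on an $n$-element labeled set has exactly one realization per choice of its red count. Tracking reds by $u$ and blues by $v$, the EGF of one family is
$$f(u,v) = \sum_{k,l\ge 1}\frac{u^k v^l}{(k+l)!},$$
and a direct computation of the partition-type sum verifies that
$$\Psi_r(u,v) := \sum_{k,l\ge 1} N^{[r]}_{k,l}\,\frac{u^k v^l}{(k+l)!} = \frac{f(u,v)^r}{r!},$$
so that $N^{[r]}_{k,l}=(k+l)!\,[u^k v^l]\Psi_r$.

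The heart of the argument is the first-order PDE
$$u(1-v)\,f_u + v(1-u)\,f_v = uv\,(1-f). \qquad (\star)$$
I would prove $(\star)$ by summing $f$ in closed form, $f(u,v)=\frac{ve^u-ue^v}{u-v}+1$, and substituting. Both $f_u$ and $f_v$ carry $(u-v)^{-2}$ in their denominators; the bracket $(1-v)[(u-v-1)e^u+e^v]+(1-u)[(v-u-1)e^v+e^u]$ collapses to $-(u-v)(ve^u-ue^v)=-(u-v)^2(f-1)$, yielding the right-hand side after division by $(u-v)^2$. (A coefficient-level verification gives $-1/(k+l-2)!$ on both sides at $[u^k v^l]$ for $k,l\ge 2$, with small boundary checks at $k=1$ or $l=1$.) Multiplying $(\star)$ by $f^{r-1}/(r-1)!$ and using $\partial_u\Psi_r = f^{r-1}f_u/(r-1)!$ promotes it to
$$u(1-v)\,\partial_u\Psi_r + v(1-u)\,\partial_v\Psi_r = uv\,(\Psi_{r-1}-r\Psi_r).$$
Extracting $[u^k v^l]$ on both sides (with $[u^k v^l]u^a v^b G=[u^{k-a}v^{l-b}]G$ and $[u^k v^l]\partial_u G=(k+1)[u^{k+1}v^l]G$) and clearing the common factor $(k+l-1)!$ produces the first recurrence verbatim.

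For the shifted recurrence, multiply the first recurrence by $(r+s)!$ and sum over $r\ge 1$, using the convention $n!=0$ for $n<0$. The first two terms pass directly to $kN^{(s)}_{k,l-1}$ and $lN^{(s)}_{k-1,l}$. Reindexing $r\mapsto r+1$ on the third term rewrites it as $(k+l-1)[(s+1)!\,N^{[0]}_{k-1,l-1}+N^{(s+1)}_{k-1,l-1}]$; for the fourth, apply $r(r+s)!=(r+s+1)!-(s+1)(r+s)!$, which holds except at $r=-s-1$ (where the left side is $0$ but the right side is $1$, contributing an extra residue $-\mathbf{1}_{s\le -2}N^{[-s-1]}_{k-1,l-1}$ inside the sum). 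The two $N^{(s+1)}_{k-1,l-1}$ contributions cancel; the boundary term $(s+1)!\,N^{[0]}_{k-1,l-1}$ vanishes for $k,l\ge 2$; and the identity-failure residue $(k+l-1)\mathbf{1}_{s\le -2}N^{[-s-1]}_{k-1,l-1}$ matches $(k+l-1)N^{[-s-1]}_{k-1,l-1}$ under the convention $N^{[-s-1]}=0$ for $s\ge -1$. Collecting yields the asserted shifted recurrence.

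The principal obstacle is the verification of $(\star)$. The calculation from the closed form is routine, but the algebraic collapse of the bracket to $-(u-v)(ve^u-ue^v)$ is what makes everything work; without this cancellation, no first-order PDE with polynomial coefficients would relate $f$ and its partials. A secondary difficulty is correctly tracking the two boundary residues (from reindexing and from the failure of the factorial identity at $r=-s-1$) in the shifted summation; together they produce the $N^{[-s-1]}_{k-1,l-1}$ term.
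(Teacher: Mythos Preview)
Your argument is correct. The derivation of the shifted recurrence from the fundamental one is essentially the same telescoping summation the paper uses (the paper writes $(r+s)!N^{[r-1]}-r(r+s)!N^{[r]}=((r-1)+(s+1))!N^{[r-1]}-(r+s+1)(r+s)!N^{[r]}+(s+1)(r+s)!N^{[r]}$ and observes that the first two terms telescope, leaving the same boundary residue you isolate). Where you genuinely diverge is in the proof of the fundamental recurrence for $N^{[r]}_{k,l}$: the paper gives a direct bijection. It rewrites the identity in positive form,
\[
N^{[r]}_{k,l}+r(k+l-1)N^{[r]}_{k-1,l-1}=(k+l-1)N^{[r-1]}_{k-1,l-1}+kN^{[r]}_{k,l-1}+lN^{[r]}_{k-1,l},
\]
splits each side into five cases (according to whether the family of the largest element is small, has $\ge 2$ reds, or has a unique red; and, for the second left-hand term, whether the auxiliary index $m$ lies below or above the top blue of the chosen family), and exhibits explicit inverse maps between matching cases. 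Your route through the closed form $f(u,v)=(ve^u-ue^v)/(u-v)+1$ and the PDE $u(1-v)f_u+v(1-u)f_v=uv(1-f)$ is shorter and cleanly lifts to $\Psi_r=f^r/r!$, but it hides the combinatorics; the paper's bijection, by contrast, is constructive and fits the family/insertion philosophy used throughout (Cases B, C, D, E are exactly the growth and exit moves of the entry/exit lemma applied to the family containing $k+l$). Either approach proves the theorem; yours buys brevity, the paper's buys an explicit set-level correspondence.
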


\begin{thm}
For negative shifts $s\le 0$, using compositions (ordered partitions) with $r\ge -s$ parts 
$$ (k,l)=(k_1,l_1)+\cdots + (k_r,l_r),$$
where $k_i,l_i\in\{1,2,\ldots\}$, letting $n_i=k_i+l_i$, the shifted multinomials $N^{(s)}_{k,l}$ decompose as 

\begin{eqnarray*}
N_{k,l}^{(s)} &=& \sum \binom{n_1+\dots +n_r-1}{n_1-1}\binom{n_2+\dots +n_r-1}{n_2-1}\dots\\
 &\ & \dots \binom{n_{-s-1}+\dots +n_r-1}{n_{-s-1}-1} \binom{n_{-s}+\dots +n_r -1}{n_{-s}-1,n_{-s+1},\dots,n_{r}}.
\end{eqnarray*}

\end{thm}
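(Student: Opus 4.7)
Set $t=-s\ge 0$. My plan is to double-count the set $\mathcal{S}$ of ordered sequences $(C_1,\dots,C_r)$, $r\ge t$, which partition $\{1,\dots,k+l\}$ into bicolored families (with $k_i,l_i\ge 1$) and which satisfy the successive minimum condition $\min C_i=\min(C_i\cup\cdots\cup C_r)$ for every $i=1,\dots,t$. Equating the two counts will give the theorem.

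For the first count, I would stratify $\mathcal{S}$ by the composition $(k,l)=(k_1,l_1)+\cdots+(k_r,l_r)$ recording the colored type of each $C_i$, and write $n_i=k_i+l_i$ and $N_i=n_i+\cdots+n_r$. Building $(C_1,\dots,C_r)$ block by block, at each of the first $t-1$ steps the smallest remaining element is forced into $C_i$, so the other $n_i-1$ elements of $C_i$ are chosen from the remaining $N_i-1$ in $\binom{N_i-1}{n_i-1}$ ways. At step $t$ I simultaneously fix $\min C_t$, choose the remaining $n_t-1$ elements of $C_t$, and distribute the leftover $N_{t+1}$ elements into ordered blocks of sizes $n_{t+1},\dots,n_r$; these combine via the identity
$$\binom{N_t-1}{n_t-1}\binom{N_{t+1}}{n_{t+1},\dots,n_r}=\binom{N_t-1}{n_t-1,n_{t+1},\dots,n_r}$$
into the multinomial appearing in the theorem. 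The bicoloring of each block is then forced by $k_i$ (its top $k_i$ elements are red), and summing the resulting product over all compositions reproduces exactly the right-hand side.

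For the second count, I would instead stratify $\mathcal{S}$ by the underlying unordered family partition. Distinct families have distinct underlying sets and hence distinct minima, so any unordered partition into $r$ families admits a canonical ordering $F_1,\dots,F_r$ by increasing minimum. The successive minimum condition then forces $C_i=F_i$ for $i\le t$, while $(C_{t+1},\dots,C_r)$ may be any permutation of $(F_{t+1},\dots,F_r)$, contributing exactly $(r-t)!=(r+s)!$ orderings. Since there are $N^{[r]}_{k,l}$ unordered partitions with $r$ parts and total type $(k,l)$, summing over $r$ gives $|\mathcal{S}|=\sum_{r\ge t}(r+s)!\,N^{[r]}_{k,l}=N^{(s)}_{k,l}$ directly from the definition of the shifted multinomial.

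The delicate point is the second count: one must verify that the successive minimum constraint really cuts the orderings of each unordered partition down to exactly $(r-t)!$ regardless of any repetitions among the types $(k_i,l_i)$. This is where the bicolored families setup pays off, since the underlying sets are always distinct and the ordering by minimum is unambiguous. The first count and the binomial identity above are routine once this combinatorial interpretation is fixed.
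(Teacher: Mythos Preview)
Your proposal is correct and matches the paper's own approach. The paper does not spell out a formal proof of this theorem; instead, in the subsection ``Combinatorial interpretation'' it asserts precisely your combinatorial model, namely that for a negative shift the quantity $N^{(s)}_{k,l}$ counts compositions in which the first $|s|$ parts are forced to be $S_1,\dots,S_{|s|}$, where $S_1$ is the part containing the smallest element, $S_2$ the part containing the smallest remaining element, and so on. Your set $\mathcal{S}$ with the successive-minimum condition is exactly this, your second count is the paper's stated interpretation, and your first count (building $C_1,\dots,C_t$ greedily and then freely distributing the rest) supplies the binomial-multinomial product that the paper records without derivation. Your observation that disjoint families have distinct minima, so the $(r-t)!$ factor is exact regardless of repeated types $(k_i,l_i)$, is the one point the paper leaves entirely to the reader, and you have handled it correctly.
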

This generalizes the expression of multinomials in terms of binomials. The identity decomposes the shift $s\le 0$ into $|s|$ binomial shifts of $-1$ (and does not work for other such decompositions of the shift). 

\subsection{Combinatorial interpretation} 

Shifted multinomials with shift 0 count compositions of a bicolored set with $a$ reds and $b$ blues into parts $S_i$ which have each a number of reds $a_i\ge 1$ and blues $b_i\ge 1$, $|S_i| = a_i+b_i$.

For a negative shift $-s<0$, $s$ of the parts are on fixed positions. For instance let $S_1$ be the part containing the smallest element 1, $S_2$ the part containing the smallest element not in $S_1$, and so on.
Then $N_{k,l}^{(s)}$ counts compositions which keep $S_1,\dots,S_s$, in order, on the first $s$ positions. In the case of our algorithm which maps permutations into ordered lists of bicolored sets, $S_1,\dots,S_s$ are in order on the last $s$ positions. In particular single block permutations are mapped onto compositions with $S_1$ last. As we map permutations of $n$, with the Eulerian lift, onto single block permutations of $n+1$ and count Eulerian numbers on the image, we obtain expressions of Eulerian numbers in terms of shifted multinomials with shift $-1$. 

For positive shifts $s>0$, shifted multinomials count compositions in which $s$ dummy parts (markers or jokers) are inserted as parts. Thus positively shifted multinomials are divisible by $s!$, accounting for the relative order of the extra parts.

\subsection{Examples}  The above formulae show that the Eulerian number $E_{3,3}$ is given by the sum over the partitions $$(3,3)=(2,2)+(1,1)=(2,1)+(1,2)=(1,1)+(1,1)+(1,1)$$
$$E_{3,3} = \frac{6!}{6!} \frac{0!}{1!}+\frac{6!}{4!2!} \frac{1!}{1!1!}+\frac{6!}{3!3!} \frac{1!}{1!1!}+\frac{6!}{2!2!2!} \frac{2!}{3!}=1+15+20+90\cdot \frac 1 3=66.$$

The fundamental multinomials are $N_{3,3}^{[1]}=1,\ N_{3,3}^{[2]}=35,\ N_{3,3}^{[3]}=15$.  

From these, we compute the Eulerian
$$E_{3,3}=0! N_{3,3}^{[1]}+1! N_{3,3}^{[2]}+2! N_{3,3}^{[3]}=1+35+2\times 15=66.$$ 
From the same fundamental multinomials, with different coefficients, we compute the derangement number
$$D_{3,3}=1! N_{3,3}^{[1]}+2! N_{3,3}^{[2]}+3! N_{3,3}^{[3]}=1+2\times 35+6\times 15=161.$$ 

The Eulerian number $E_{3,3}$ also has the following expression, defined in terms of compositions of $(3,3)$,
$$(3,3)=(2,2)+(1,1)=(1,1)+(2,2)=(2,1)+(1,2)=(1,2)+(2,1)=(1,1)+(1,1)+(1,1)$$ with sums $$6=2+4 = 4+2 = 3+3 = 3+3=2+2+2$$ 
is, after shifting the size of the first parts  by $-1$, as sums of multinomials
$$E_{3,3}=\binom{5}{5}+\binom{5}{1,4}+\binom{5}{3,2}+\binom{5}{2,3}+\binom{5}{2,3}+\binom{5}{1,2,2}=1+5+10+10+10+30=66.$$

Eulerians, derangements, multinomials, and their relations are illustrated in the following picture.
\vskip 0.2 in
\includegraphics[width=5.9 in]{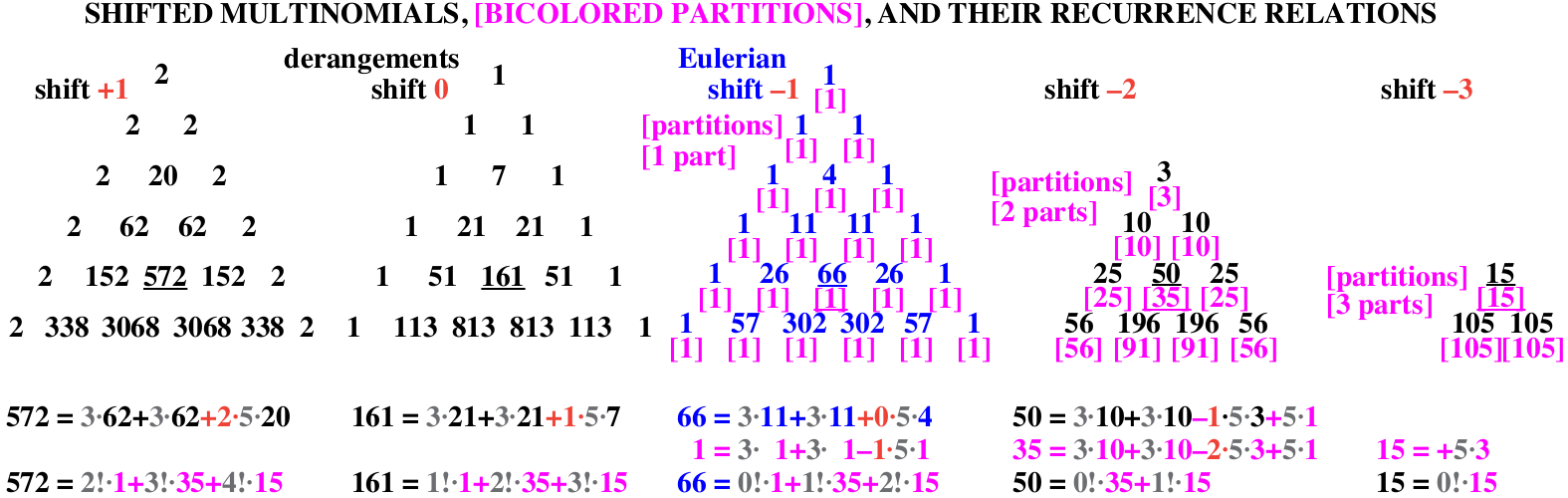}
\vskip 0.2 in

\subsection{Comments}
In the middle of the last century, colored partitions were studied by analogy with the rich structure of number partitions, but the interest waned without the realization that bicolored partitions were fundamentally related to permutations. In particular our shifted multinomials appear to be a new concept, missing at that time. In our case, the shifts in factorials are due to linear relations in the Quantum Field Theory model which motivated the combinatorics.

The shifted multinomials $N^{(s)}_{k,l}$ satisfy $N_{k,l}^{(-1)}=E_{k,l}$, the Eulerian numbers, and $N_{k,l}^{(0)}=D_{k,l},$ the derangement numbers.   $N_{k,l}^{(+1)}$ appeared in the count of certain hyperdeterminants. The boundary cases $N_{k,1}^{(-k-1)}$ are associated Stirling numbers of the second kind. 

Except for these few, according to the oeis database, no other numbers  $N_{k,l}^{(s)},\ N_{k,l}^{[r]}$ which we described above appear to have been studied before. 

The connection between the Eulerian $N_{k,l}^{(-1)}$ and derangement $N_{k,l}^{(0)}$ numbers, both obtained from the same fundamental numbers $N_{k,l}^{[r]}$ with different factorial multiplicities appears to be new as well.

In the literature, there are relations for bicolored partitions involving an arbitrarily large number of terms.  Our Eulerian type relations, which involve only immediate neighbors, appear to be new.

\section{Recurrence relations for multinomials.  Algorithms and proofs.}

We now prove the Eulerian type relations 
\begin{eqnarray*}
N_{k,l}^{[r]} &=& kN_{k,l-1}^{[r]}+lN_{k-1,l}^{[r]}+(k+l-1)(N_{k-1,l-1}^{[r-1]}-r N_{k-1,l-1}^{[r]})
\end{eqnarray*}
combinatorially, by providing a bijection between the two members.  The equality involves the unordered partitions number $N^{[r]}_{k,l}$, which counts bicolored subsets which are mutually disjoint, each with at least one element of each color, with a total of $k$ respectively $l$ elements of each color, and having the elements labeled by the numbers $1,2,\ldots,k+l$.  The sequence of sets is considered unordered, and inside each set the assignment of colors to elements is ignored.  In other words, we look at an unordered partition of \[\{1,2,\ldots,k+l\}=S_1\cup\cdots\cup S_r,\] and numbers $k_1,l_1,\ldots,k_r,l_r\in\{1,2,\ldots,\}$, with $k_i+l_i= \left\vert S_i\right\vert$ and $\sum_i k_i=k,$ $\sum_i l_i=l$.  We color each set $S_i$ as a set family, the highest $k_i$ numbers in each $S_i$ red and the other $l_i$ blue. In what follows we shall denote by ${\cal N}^{[r]}_{k,l}$ the set, having cardinality $N^{[r]}_{k,l}$, of all partitions of $\{1,2,\ldots,\}$ into nonsinglet set families with a total of $k$ reds and $l$ blues.  For instance, ${\cal N}^{[2]}_{2,2}$ consists of the partitions 

\[\{\bb{1},\rr{2}\}\cup \{\bb{3},\rr{4}\},
\{\bb{1},\rr{3}\}\cup \{\bb{2},\rr{4}\},
\{\bb{1},\rr{4}\}\cup \{\bb{2},\rr{3}\},\]
or more simply 
\[\bb{1}\rr{2}\cup \bb{3}\rr{4},\bb{1}\rr{3}\cup \bb{2}\rr{4},\bb{1}\rr{4}\cup \bb{2}\rr{3}.\]

We write the induction relation in a positive form, as 
\begin{eqnarray*}
N_{k,l}^{[r]} + r(k+l-1)N_{k-1,l-1}^{[r]} & = & (k+l-1)N_{k-1,l-1}^{[r-1]}+kN_{k,l-1}^{[r]}+lN_{k-1,l}^{[r]}.
\end{eqnarray*}
We shall describe a bijection between sets counted by the two members above.  $N^{[r]}_{k,l}$ counts partitions of $\{1,\dots,k+l\}$ into $r$ families, with a total of $k$ reds and $l$ blues. For such a partition $\cal P$ we consider 3 cases, depending on the family $S$ of its largest element $k+l$, which being largest is colored red,

\textbf{Case (A)} The family $S$ is small, i.e. it has two elements.

\textbf{Case (B)} The family $S$ has $\ge 2$ red elements (including $k+l$).

\textbf{Case (C)} The family $S$ has $k+l$ as the unique red element, and $\ge 2$ blue elements.

The second term $r(k+l-1)N_{k-1,l-1}^{[r]}$ counts triples $({\cal P},m,S)$ where ${\cal P}$ is a partition of $\{1,\dots,k+l-2\}$ into $r$ families, with a total of $k-1$ reds and $l-1$ blues, $m\in\{1,\ldots,k+l-1\}$, and $S$ is a choice of one of the $r$ families of $\cal P$.

We consider two cases. Decompose $S = R\cup B$ into its red and blue elements, where $r>b$ for any $r\in R$ and $b\in B$. Now either

\textbf{Case (D)} $m \le \max B$ or 

\textbf{Case (E)} $m > \max B$.

The corresponding cases counted by the right side member are the following.

\textbf{Case (A$'$)} The first term $(k+l-1)N_{k-1,l-1}^{[r-1]}$ counts pairs $({\cal P},m)$ between a partition ${\cal P}$ into $r-1$ families having a total of $k-1$ reds and $l-1$ blues and a number $m\in \{1,\dots,k+l-1\}$.

The second term $kN_{k,l-1}^{[r]}$ counts pairs $({\cal P},m)$ between a partition ${\cal P}$ of $\{1,\dots,k+l-1\}$ into $r$ families having a total of $k$ reds and $l-1$ blues and a number $m$ which is among the $k$ reds in ${\cal P}$. Let $S$ be the family containing $m$. We distinguish two cases.

\textbf{Case (C$'$)} The family $S$ has $m$ as its unique red element, or

\textbf{Case (E$'$)} The family $S$ has $\ge 2$ red elements, including $m$.

The third term $lN_{k-1,l}^{[r]}$ counts pairs $({\cal P},m)$ between a partition ${\cal P}$ of $\{1,\dots,k+l-1\}$ into $r$ families having a total of $k-1$ reds and $l$ blues and a number $m$ which is among the $l$ blues in ${\cal P}$.
Let $S$ be the family containing $m$. We distinguish two cases.

\textbf{Case (B$'$)} $m$ is the largest blue in $S$, or

\textbf{Case (D$'$)} $m$ is not the largest blue in $S$.

We now describe a bijection between each case $A,\dots,E$ and the corresponding case primed.

\textbf{Case (A)} $\cal P$ is a partition of $\{1,\dots,k+l\}$ into $r$ families, with a total of $k$ reds and $l$ blues. Its largest element $k+l$ is in a small family $S = \{\bb{m},\rr{k+l}\}$. Remove the family $S$ from the partition $\cal P$. Lower all labels $>m$ by 1, to obtain a partition ${\cal P}'$ of $\{1,\dots,k+l-2\}$ into $r-1$ families  having a total of $k-1$ reds and $l-1$ blues. The pair $({\cal P}',m)$ is in the \text{Case (A$'$)}.

\textbf{Case (A$'$)} Given a pair $({\cal P},m)$ between a number  $m\in \{1,\dots,k+l-1\}$  and a partition ${\cal P}$ of $\{1,\dots,k+l-2\}$ into $r-1$ families having a total of $k-1$ reds and $l-1$ blues, increase all labels $>m$ in ${\cal P}$ by 1 and append to  ${\cal P}$ the family $\{\bb{m},\rr{k+l}\}$. We obtain this way a partition ${\cal P}'$ of $\{1,\dots,k+l\}$ into $r$ families having a total of $k$ reds and $l$ blues, with its largest element $k+l$ in a small family, which is the \text{Case (A)}.

\textbf{Case (B)} We have a partition ${\cal P}$ of $\{1,\dots,k+l\}$ into $r$ families having a total of $k$ reds and $l$ blues, with the family $S$ of the largest element $k+l$ having $\ge 2$ red elements. Let $m$ denote the largest blue in $S$. Remove the element $k+l$. The remaining partition ${\cal P}'$ of $\{1,\dots,k+l-1\}$ into $r$ families has a total of $k-1$ reds and $l$ blues. The pair $({\cal P}',m)$ is in the \text{Case (B')}.

\textbf{Case (B$'$)} We have a pair $({\cal P}',m)$ between a partition ${\cal P}$ of $\{1,\dots,k+l-1\}$ into $r$ families having a total of $k-1$ reds and $l$ blues, and a number $m$ marked blue, which is the largest blue in its family $S$. Insert the element $k+l$ colored red into $S$ to obtain a partition ${\cal P}$ of $\{1,\dots,k+l\}$ into $r$ families with a total of $k$ reds and $l$ blues, with the family $S$ of its largest element $k+l$ having $\ge 2$ red elements. Then  ${\cal P}$ is in the \text{Case (B)}.

\textbf{Case (C)} We have a partition ${\cal P}$ of $\{1,\dots,k+l\}$ into $r$ families having a total of $k$ reds and $l$ blues, with the family $S$ of the largest element $k+l$ having no other red element. Remove the element $k+l$. Let $m$ be the remaining largest element in $S$.  Change the color of $m$ from blue to red. The remaining partition ${\cal P}'$ of $\{1,\dots,k+l-1\}$ into $r$ families has a total of $k$ reds and $l-1$ blues. The pair $({\cal P}',m)$ has $m$ as the unique red in its family, so is in the case \text{Case (C$'$)}.

\textbf{Case (C$'$)} We have a pair $({\cal P}',m)$ between a partition ${\cal P}'$ of $\{1,\dots,k+l-1\}$ into $r$ families having a total of $k$ reds and $l-1$ blues, and $m$ which is a number in $\{1,\dots,k+l-1\}$ marked red in ${\cal P}'$, with $m$ being the unique red in its family $S$. Insert $k+l$ into $S$ and change the color of $m$ from red to blue. The corresponding partition ${\cal P}$ of $\{1,\dots,k+l\}$ into $r$ families has a total of $k$ reds and $l$ blues. The largest element $k+l$ is the unique red in its family. The partition ${\cal P}$ is in the \text{Case (C)}.

\textbf{Case (D)} We have a triple $({\cal P},m,S)$ where $1\le m\le k+l-1$, ${\cal P}$ is a partition of $\{1,\dots,k+l-2\}$ into $r$ families, with a total of $k-1$ reds and $l-1$ blues and $S$ is a family of $\cal P$. The family $S = R\cup B$ is decomposed into its red and blue elements, and $m \le \max B$. Raise the labels of elements $\ge m$ in ${\cal P}$ by 1 and insert $m$ colored blue into $S$. We obtain a partition ${\cal P}'$ of $\{1,\dots,k+l-1\}$ into $r$ families, with a total of $k-1$ reds and $l$ blues. In it $m$ is not the largest blue in its family. The pair $({\cal P}', m)$ is in the \text{Case (D$'$)}.

\textbf{Case (D$'$)} We have a pair $({\cal P}',m)$ between a partition ${\cal P}'$ of $\{1,\dots,k+l-1\}$ into $r$ families having a total of $k-1$ reds and $l$ blues, and $m$ which is a number in $\{1,\dots,k+l-1\}$ marked blue in ${\cal P}'$ which is not the largest blue in its family $S'$. Remove $m$ and lower the elements $> m$ by 1 to get a set $S$ and a partition ${\cal P}$ of $\{1,\dots,k+l-2\}$ into $r$ families having a total of $k-1$ reds and $l-1$ blues. Then the triple $({\cal P},m,S)$ is in the \text{Case (D)}

\textbf{Case (E)} We have a triple $({\cal P},m,S)$ where $1\le m\le k+l-1$, ${\cal P}$ is a partition of $\{1,\dots,k+l-2\}$ into $r$ families, with a total of $k-1$ reds and $l-1$ blues and $S$ is a family of $\cal P$. The family $S = R\cup B$ is decomposed into its red and blue elements, and $m > \max B$. Raise the labels of elements $\ge m$ in ${\cal P}$ by 1 and insert $m$ colored red into $S$. We obtain a partition ${\cal P}'$ of $\{1,\dots,k+l-1\}$ into $r$ families, with a total of $k$ reds and $l-1$ blues, so that $m$ is not the unique red in its family. The pair $({\cal P}', m)$ is in the \text{Case (E$'$)}.

\textbf{Case (E$'$)} We have a pair $({\cal P}',m)$ between a partition ${\cal P}'$ of $\{1,\dots,k+l-1\}$ into $r$ families having a total of $k$ reds and $l-1$ blues, and $m$ which is a number in $\{1,\dots,k+l-1\}$ marked red in ${\cal P}'$, so that $m$ is not the unique red in its family $S'$. Remove $m$, lower the elements $> m$ by 1 to get a family $S$ and a partition ${\cal P}$ of $\{1,\dots,k+l-2\}$ into $r$ families having a total of $k-1$ reds and $l-1$ blues. Then the triple $({\cal P},m,S)$ is in the \text{Case (E)}

This ends the proof.  

We choose an arbitrary integer $s$ and prove the recurrence relation for $s$-shifted multinomials.  We multiply the relation obtained
\begin{eqnarray*}
N_{k,l}^{[r]} &=& kN_{k,l-1}^{[r]}+lN_{k-1,l}^{[r]}+(k+l-1)(N_{k-1,l-1}^{[r-1]}-r N_{k-1,l-1}^{[r]}).
\end{eqnarray*}
with $(r+s)!$, where negative factorials are taken to be 0. We obtain
\begin{eqnarray*}
(r+s)!N_{k,l}^{[r]} &=& k(r+s)!N_{k,l-1}^{[r]}+l(r+s)!N_{k-1,l}^{[r]}+ \\
&& +(k+l-1)((r+s)!N_{k-1,l-1}^{[r-1]}-r (r+s)!N_{k-1,l-1}^{[r]})
\end{eqnarray*}
which we write as
\begin{eqnarray*}
(r+s)!N_{k,l}^{[r]} &=& k(r+s)!N_{k,l-1}^{[r]}+l(r+s)!N_{k-1,l}^{[r]}+\\&&+ (k+l-1)(((r-1)+(s+1))!N_{k-1,l-1}^{[r-1]}-(r+(s+1)) (r+s)!N_{k-1,l-1}^{[r]})+\\
&&+(k+l-1)((s+1) (r+s)!N_{k-1,l-1}^{[r]}), \\
\end{eqnarray*}
We now sum over all $r\ge \max(-s,1)$. The sum on the second row is telescopic with respect to $r$, and we are left with   $$(k+l-1)(\max(-s,1)+s)!N_{k-1,l-1}^{[\max(-s,1)-1]}=(k+l-1)(\max(s+1,0))!N_{k-1,l-1}^{[\max(-s-1,0)]}$$
If  $s\ge-1$ then $N_{k-1,l-1}^{[\max(-s-1,0)]}=N_{k-1,l-1}^{[0]}=0$. Else if $s\le -2$ then $(\max(s+1,0))!N_{k-1,l-1}^{[\max(-s-1,0)]}=N_{k-1,l-1}^{[-s-1]}$. We obtain in all cases
\begin{eqnarray*}
N_{k,l}^{(s)} &=& kN_{k,l-1}^{(s)} +lN_{k-1,l}^{(s)} +(k+l-1)(N_{k-1,l-1}^{[-s-1]}+(s+1)N_{k-1,l-1}^{(s)})
\end{eqnarray*}
Note that when $s\ge-1$ the last relation simplifies to the positive 
$$N_{k,l}^{(s)} = kN_{k,l-1}^{(s)} +lN_{k-1,l}^{(s)} +(k+l-1)(s+1)N_{k-1,l-1}^{(s)},$$ In particular this gives for $s=-1$ the relations satisfied by Eulerian permutations 
\begin{eqnarray*}
N_{k,l}^{(-1)} &=& kN_{k,l-1}^{(-1)} +lN_{k-1,l}^{(-1)}
\end{eqnarray*}
and for $s=0$ the relations satisfied by derangements.  
\begin{eqnarray*}
N_{k,l}^{(0)} &=& kN_{k,l-1}^{(0)} +lN_{k-1,l}^{(0)} +(k+l-1)N_{k-1,l-1}^{(0)}
\end{eqnarray*}

\section{The structure of a permutation: families as building blocks}
\subsection{The entry/exit lemma}

Recall that a nonsinglet family $S$ written as a bicolored set is $S=\{\bb{b_1},\bb{b_2},\dots,\bb{b_l},\rr{a_1},\rr{a_2},\dots,\rr{a_k}\}$, with $\ge 1$ reds and $\ge 1$ blues,  in which any red is larger than any blue.  The same family written as a sequence is $S=(a_1,\dots,a_k,b_l,\dots,b_1)$ with $k,l \ge 1$ and $b_1<b_2<\cdots<b_l<a_1<a_2<\cdots<a_k$, or as a colored sequence $S=(\rr{a_1},\dots,\rr{a_k},\bb{b_l},\dots,\bb{b_1})$.

{\bf Definition.} Let $S$ be a nonsinglet family, written as a sequence. Let $m\in \R$ be such that $m>\max S$. Denote by $S'$ the sequence obtained by inserting $m$ in the sequence $S$ on any position other than the last at the end of $S$. Let $x\in S$ be the element located immediately after $m$ in the sequence $S'$, which we call the \textbf{friend of $m$ in $S$.} We call $x$ an {\bf exit} element of $S$ (or say that $x$ is in an exit position in $S$) if $S'$ is \textbf{not} a family. 

\textbf{Remarks.} A family with 2 elements, which we call a {\bf small family}, has no exit elements, while a family with $\ge 3$ elements, which we call a {\bf large family}, remains a family after the removal of any of its elements.

In the algorithms which follow, the friend $x$ of $m$ as above will be pulled out of its family $S$ and will start a new sequence family, $(m,x)$. From the perspective of the old family $S$, $(m,x)$ will be called a {\bf bud}, as it starts a new branch of the family tree. From the point of view of the newly started family, $(m,x)$ will be called the {\bf family root}, with $x$ called the {\bf family founder} and $m$ the {\bf family cofounder}.

{\bf Definition}
Let $S$ be a nonsinglet family, written as a bicolored set. Let $x\in \N$ be such that $x\not\in S$. Let $S'=S\cup \{x\}$, bicolored so that it is a family and the restriction of its coloring to $S$ is the coloring of $S$. We say that $x$ had a {\bf regular entry} in $S'$, or was {\bf regularly inserted} into $S'$, if the color of $x$ in $S'$ is determined as follows.  In all cases except for one, the color of is determined by the requirement that after $x$ enters, $S$ is a family.  The only exception is when the value $x$ is between the highest blue and the lowest red.  In that case, placing $x$ at the beginning, as a red, or between the reds and blues, as a blue, will both give families.  The algorithms in which this process is used require that after $x$ enters $S'$, then $x$, as a friend should be possible to extract by a higher element placed before it in $S$.  This is possible only in the case in which between these two choices $x$ enters as a red in front, and we shall call this case a regular entry.  Thus a regular entry is characterized by the fact that it can be reversed by an exit, which is the content of the entry/exit lemma.  Let $S=\{\bb{b_1},\bb{b_2},\dots,\bb{b_l},\rr{a_1},\rr{a_2},\dots,\rr{a_k}\}$, with  $b_1<b_2<\cdots<b_l<a_1<a_2<\cdots<a_k$ as above.  If $x<b_l$ then $x$ is colored {\bf blue} in $S'$.  If $x>b_l$ then $x$ is colored {\bf red} in $S'$.

{\bf Remark} If $x<b_l$, respectively $x>a_1$ then $x$ must be colored blue, respectively red in order for $S'$ to be a family.  The only multiple choice of color is when $b_l<x<a_1$ and in that case a regular entry will color $x$ red, in view of the result which follows, which connects the horizontal (sequence order) of a family in sequence form with the vertical structure of the same family as a set family. 

\begin{thm} {\bf (the entry/exit lemma)}

If $S$ is a nonsinglet family, $x\in S$ is in an exit position, and $S'$ denotes the family $S$ with $x$ removed, then the regular insertion of $x$ into $S'$ gives back $S$.

If $S$ is a nonsinglet family, and $x\not\in S$ is inserted regularly into $S$ to obtain a family $S'$, then $x$ is in an exit position in $S'$.
\end{thm}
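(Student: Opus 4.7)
The plan is to first pin down the non-exit positions of a nonsinglet family $S=(a_1,\dots,a_k,b_l,\dots,b_1)$ and then show that both halves of the lemma reduce to matching the colour of $x$ there.

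First, a direct check of where one may insert $m>\max S$ in the sequence and preserve the pattern ``reds increasing, then blues decreasing, with all reds above all blues'' shows that the non-exit positions of $S$ are exactly the slot before $b_l$, together with the slot before $a_1$ in the special case $k=1$. Equivalently, the only non-exit elements of $S$ are $b_l$, plus $a_1$ when $k=1$; every other element is an exit.

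For the forward direction, let $x\in S$ be an exit and put $T=S\setminus\{x\}$. Because $x\ne b_l$, the maximum blue of $T$ is still $b_l$, and because $x\ne a_1$ when $k=1$, $T$ retains at least one element of each colour, so it is a nonsinglet family. The regular-entry rule colours the reinserted $x$ red if $x>b_l^T$ and blue if $x<b_l^T$; since $b_l^T=b_l$, this reproduces exactly the colour $x$ had in $S$, including the ambiguous case $x=a_1$ with $k\ge 2$, where $x$ falls in the band $b_l<x<a_1^T=a_2$ and the red-in-front convention correctly reassigns it as red. Once the colour matches, the sorted position within that colour class is forced, so $x$ returns to its original slot and the reinsertion recovers $S$.

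For the reverse direction, let $x\notin S$ be regularly inserted into $S$ to form $S'$, and split on the value of $x$. If $x<b_l$ then $x$ is blue with $b_l^{S'}=b_l>x$, and when $k=1$ the only other non-exit slot of $S'$ contains the red $a_1\ne x$; so $x$ sits at an exit position. If $x>b_l$ then $x$ is red, hence $k(S')=k+1\ge 2$ and $S'$ has a unique non-exit position, the slot before $b_l$; a quick check in the three subcases $b_l<x<a_1$, $a_i<x<a_{i+1}$, and $x>a_k$ places $x$ strictly before that slot in each.

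The main subtlety, which is exactly what forces the ``red in front'' convention in the definition of a regular entry, is the ambiguous band $b_l<x<a_1$: both colourings of $x$ yield a family $S'$, but only red-in-front places $x$ at an exit position of $S'$. The blue-between alternative would make $x$ the new maximum blue of $S'$, whose slot coincides with the unique non-exit position of $S'$, and the lemma would fail for that choice. Once this case is handled correctly, the remaining verifications are bookkeeping on sorted positions.
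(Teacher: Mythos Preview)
Your proof is correct and follows essentially the same approach as the paper: both arguments hinge on identifying the non-exit elements as precisely the highest blue $b_l$ together with the unique red $a_1$ when $k=1$, and then observing that removing an exit element leaves the highest blue unchanged, so the regular-entry colour test against $b_l$ restores the original colour. Your explicit discussion of the ambiguous band $b_l<x<a_1$ and why the red-in-front convention is forced there is a nice expository addition; the three-subcase check at the end is more than is strictly needed (once $x$ is red and $k(S')\ge 2$, the only non-exit element is the blue $b_l$, so $x\ne b_l$ suffices), but it does no harm.
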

\begin{proof}
The only nonexit positions in a family $S$ are the {\bf highest blue} and the {\bf unique red}.  If $S=(a_1,\dots,a_k,b_l,\dots,b_1)$ with $b_1<b_2<\cdots<b_l<a_1<a_2<\cdots<a_k$ and $m>\max S$, then $S'=(a_1,\dots,a_k,m,b_l,\dots,b_1)$ with $b_1<b_2<\cdots<b_l<a_1<a_2<\cdots<a_k<m$ is a family, in which the friend of $m$ is $b_1$, the highest blue.  

In case $k=1$, i.e. if there is a unique red, then $S'=(m,a_1,b_l,\dots,b_1)$ with $b_1<b_2<\cdots<b_l<a_1<m$ is a family in which the unique red $a_1$ in $S$ has {\bf changed color to blue} in $S'$, and $m$ is now the unique red in $S'$. It is easy to check, and left to the reader (see also the illustration) that if the friend of $m$ is any element other than the highest blue or the unique red, then $S'$ is not a family, so that a position in $S$ is an exit position iff it is not the highest blue or the unique red. Note that in a small family $\vert S\vert = 2$, the 2 elements are the unique red and the highest blue, so there are no exit positions.

We now check that if $x\in S$ is in an exit position and is taken out of $S$ to obtain $S'$, then if $x$ is reintroduced regularly in $S'$ then $x$ recovers the color it had in $S$, and thus $x$ recovers its position in $S$ as a sequence family.  If $x$ was blue, but not the highest blue, it will enter below the highest blue and be colored blue. If $x$ is red, but not the unique red, then the highest blue in $S$ will remain the highest blue in $S'$, so $x$ will enter above the highest blue in $S'$ and with a regular entry will be colored red as it was before.

For the second part of the statement, remark that in a regular entry of $x$ in $S$, either $x$ is below the highest blue of $S$ and will be colored blue, but will not become the highest blue, or $x$ is above the highest blue of $S$ and will be colored red. In the latter case $x$ will not become the unique red, since $S$ had at least one red already. Thus in both cases $x$ will be in an exit position in $S'=S\cup \{x\}$, since the only nonexit positions are the highest blue and the unique red.
\end{proof}

Recall that a nonsinglet family $F$ written as a bicolored set is $\{\bb{b_1},\bb{b_2},\dots,\bb{b_l},\rr{a_1},\rr{a_2},\dots,\rr{a_k}\}$, written as a sequence is $(a_1,\dots,a_k,b_l,\dots,b_1)$ or as a colored sequence $S=(\rr{a_1},\dots,\rr{a_k},\bb{b_l},\dots,\bb{b_1})$, with $k,l \ge 1$ and $b_1<b_2<\cdots<b_l<a_1<a_2<\cdots<a_k$; its subjacent set is $\{b_1,\dots,b_l,a_1,\dots,a_k\}$.  A singlet family is as a set $\{s\}$ and as a sequence $(s)$.

{\bf Definition} A (family) \textbf{registry} ${\cal R}$ is an ordered list, or sequence, of mutually disjoint families ${\cal R}=(F_1,\dots,F_k)$. The set $S=\bigcup_i F_i$ is called the range of ${\cal R}$. Let $\phi$ be the order preserving bijection $S\to \{1,\dots,|S|\}$. The pattern ${\cal R}'=(F'_1,\dots,F'_k)$ of ${\cal R}$ is the registry in which $F'_i = \phi(F_i)$ and $\phi$ preserves colors.

A {\bf stripped registry} is a registry with no singlets, From a full registry one obtains a stripped registry by removing the singlets.

A registry ${\cal R}$ is decomposed into blocks by successive minima in the same way as a permutation. Namely, let $m_1$ be the minimum of all members of all families in ${\cal R}$, found in a family $F_{i_1}$ Let $m_2$ be the minimum of all members of all families located after $F_{i_1}$, found in a family $F_{i_2}$, etc. With $i_0=0$ we have now registries ${\cal R}^{(l)}=(F_{i_{l-1}+1},\dots,F_{i_l})$ called block subregistries which concatenate to ${\cal R}$.
For a block of a permutation, its last element, which is also its minimum, is called the block anchor. Similarly, for a block subregistry, the minimum of all the families in it is called the anchor.

Suppose that $\cal R$ is a registry with support $\overline n$ in which the only singlets are singlet blocks. Let ${\cal R}'$ be the registry from which all the singlet blocks have been stripped, i.e. removed. Let $S$ denote the complement of the range of ${\cal R}'$ in $\overline n$. Insert the elements of $S$ as singlets into ${\cal R}'$ successively, starting with the smallest one, in increasing order. If the smallest singlet is $(1)$, insert it on the first place. Else compute the blocks of ${\cal R}'$, which end in the sequence of successive minima. Insert the new singlet $(s)$ on the position immediately to the right of the last block which has minimum $\ge s$. By induction, the union of the subjacent sets of the blocks with minima $< s$ is the interval $\overline{s-1}$. The last successive minimum of these blocks is $<s$. We insert $(s)$ immediately after the last of these blocks. That will make $((s))$ a singlet block. Continue until the set $S$ is finished and one obtains a registry with range $\overline n$.

\begin{thm}
 (the stripping lemma)
 If $\cal R$ is a registry with support $\overline n$ in which the only singlets are singlet blocks, if one strips away these singlets and then one reintroduces them with the above algorithm, then the resulting registry is $\cal R$.
 
\end{thm}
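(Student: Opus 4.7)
My plan is to prove the lemma by induction on the number of stripped singletons, with the main preparatory step being a structural observation about how the successive-minima block decomposition behaves under partial stripping.

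\textbf{Structural observation.} If $T$ is any subset of the singleton blocks of $\mathcal{R}$ and $\mathcal{R}_T$ is the registry obtained by removing the families in $T$ from $\mathcal{R}$, then the blocks of $\mathcal{R}_T$ (by successive minima) are exactly the surviving $B_k$ of $\mathcal{R}$, with their original minima $m_k$. Indeed, the internal order and values of each surviving $B_k$ are unchanged, so no new successive minimum appears inside it; and the tail from $B_k$ in $\mathcal{R}_T$ is contained in the tail from $B_k$ in $\mathcal{R}$ while still containing $m_k$, so $m_k$ remains the minimum of that tail. Strict monotonicity $m_{k-1}<m_k$ moreover guarantees that removing an entire block never causes its neighbors to merge.

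\textbf{Induction setup and inductive step.} Let $s_1<\cdots<s_p$ be the values of the singleton blocks of $\mathcal{R}$, let $\alpha_l$ denote the index with $B_{\alpha_l}=((s_l))$, and note $\alpha_1<\cdots<\alpha_p$ by strict monotonicity of successive minima. For $0\le j\le p$ write $\mathcal{R}^{(j)}$ for $\mathcal{R}$ with only the singleton blocks $((s_{j+1})),\dots,((s_p))$ removed, so $\mathcal{R}^{(0)}=\mathcal{R}'$ and $\mathcal{R}^{(p)}=\mathcal{R}$. I aim to show by induction on $j$ that the $j$-th reinsertion step transforms $\mathcal{R}^{(j-1)}$ into $\mathcal{R}^{(j)}$. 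By the structural observation applied to $\mathcal{R}^{(j-1)}$, its blocks are exactly the $B_i$ with $i\notin\{\alpha_j,\dots,\alpha_p\}$, with their original minima. Since all stripped indices lie in $\{\alpha_j,\alpha_j+1,\dots\}$, the blocks $B_1,\dots,B_{\alpha_j-1}$ all survive, and they are precisely the blocks of $\mathcal{R}^{(j-1)}$ with minimum less than $s_j=m_{\alpha_j}$. The algorithm inserts $(s_j)$ immediately after the last such block, i.e.\ right after $B_{\alpha_j-1}$ (or at the beginning if $\alpha_j=1$), which is exactly the slot occupied by $B_{\alpha_j}=((s_j))$ in $\mathcal{R}$. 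Hence the output of the step equals $\mathcal{R}^{(j)}$ as an ordered sequence of families. To maintain the inductive hypothesis I verify that $(s_j)$ is again a singleton block in $\mathcal{R}^{(j)}$: its successor block (if any) is some $B_l$ with $l>\alpha_j$, hence $m_l>s_j$, making $s_j$ a successive minimum and $(s_j)$ alone a block. Iterating from $j=1$ to $p$ yields $\mathcal{R}^{(p)}=\mathcal{R}$.

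The principal obstacle is the structural observation above; the rest is bookkeeping. Once one knows that partial stripping preserves the block decomposition of the remaining blocks, it is forced that each stripped singleton $(s_j)$ has a unique admissible position among the surviving blocks, namely immediately after the last block of smaller minimum, and this is exactly the position prescribed by the re-insertion algorithm.
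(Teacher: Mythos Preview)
Your proof is correct. The paper does not give a separate proof of the stripping lemma; it states the theorem immediately after describing the reinsertion algorithm and moves on, so you have supplied what the paper leaves to the reader. Your structural observation---that deleting any collection of entire blocks from a registry leaves the remaining blocks intact as blocks of the new registry, with their original minima---is exactly the right lever, and your induction on the number of reinserted singletons is the natural way to organize the verification.

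One small remark on comparison: the paper's algorithm description does contain a fragment of justification, namely the inductive claim that at each reinsertion step the blocks with minima $< s$ have combined support exactly $\overline{s-1}$. That is a slightly different bookkeeping device from your structural observation (it locates the insertion slot by value rather than by block index), but it encodes the same information. Your approach via block indices $\alpha_1<\cdots<\alpha_p$ is arguably cleaner, since it works directly with the order of successive minima and avoids tracking supports. Either route yields the lemma with the same inductive skeleton.
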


The main result of this section is the following decomposition of any permutation into an ordered list of very simple pieces, families, with no restrictions on the image. 

\begin{thm}
Permutations $\pi$ of $\overline n$ are in bijection with the stripped registries ${\cal R}=(F_1,\dots,F_k)$ for which, as sets, $\bigcup_i F_i\subset \overline n$.

The bijection maps each singlet block of $\pi$ into the empty registry. 

It maps each nonsinglet block of $\pi$ into a registry block with the same support, with pattern the image of the pattern of that block.

The images of individual blocks are concatenated in order.

\end{thm}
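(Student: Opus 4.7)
The plan is to construct the bijection directly and inductively via Euler-style insertion of the largest element, using the entry/exit lemma as the main combinatorial tool. First I would reduce to the case of a single nonsinglet block: successive minima induce compatible block decompositions on both sides, and singlet blocks of $\pi$ must map to the empty registry, with their positions recoverable via the stripping lemma. It therefore suffices to exhibit, for each $m\ge 2$, a bijection between single-block permutations of $\overline m$ (those with $1$ at the last position) and stripped block subregistries whose union of families is exactly $\overline m$ and whose successive family minima form a single block.

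I would then build the bijection by induction on $m$, matching Euler insertion on the permutation side with family dynamics on the registry side. The base case $m=2$ matches the permutation $21$ with the registry $((\rr{2}\bb{1}))$. For the induction step, given a single-block permutation $\pi$ of $\overline m$, let $x$ be the friend of $m$ (the element immediately following $m$ in $\pi$) and let $\pi'$ be $\pi$ with $m$ removed, corresponding by induction to a stripped registry ${\cal R}'$. If $x$ belongs to a family $S$ of ${\cal R}'$, apply the entry/exit lemma to $x$ in $S$: either $x$ occupies a nonexit position, in which case $m$ regularly enters $S$ and ${\cal R}$ is obtained from ${\cal R}'$ by replacing $S$ with $S\cup\{m\}$; or $x$ occupies an exit position, in which case $m$ pulls $x$ out to form a new bud family $(\rr{m}\bb{x})$, to be inserted at a prescribed location in ${\cal R}'$, with $S\setminus\{x\}$ either remaining or (if singleton) being stripped. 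If $x$ is a singleton in $\pi'$, the bud $(\rr{m}\bb{x})$ is created and placed canonically. The backward map inverts each move: extract the largest element $m$ from ${\cal R}$, determine whether it was regularly inserted into a family (undo the regular entry) or was the cofounder of a bud with founder $x$ (return $x$ to its mother family via regular insertion, with the correct color given by the entry/exit lemma), then reinsert $m$ just before $x$ in $\pi'$.

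The hard part is making the bud-insertion rule precise --- exactly where in ${\cal R}'$ the new family $(\rr{m}\bb{x})$ is placed, and symmetrically, how to identify a bud's mother family in the backward step --- so that the two maps are mutually inverse and respect the block decomposition claimed by the theorem. The rule must align with the successive-minima structure on both sides: the bud must sit at the boundary of the appropriate block subregistry so that, when the backward algorithm reconstructs $\pi$, the block containing $m$ terminates at the correct successive minimum. Once this positional rule is fixed, verifying mutual inverseness and the block-preservation property reduces to a careful but mechanical case analysis driven by the entry/exit lemma (handling small versus large families, exit versus nonexit positions, and the color-swap case for a unique red in the mother family), completing the induction.
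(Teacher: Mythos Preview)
Your outline matches the paper's approach closely: Euler-style induction on the largest element, reduction to single blocks via successive minima and the stripping lemma, and the entry/exit lemma governing whether $n$ joins its friend's family or pulls the friend out to form a bud. This is exactly the paper's framework.

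However, you have correctly identified but not supplied the crux, and it is not as freely choosable as your last paragraph suggests. The paper's positioning rule is a \emph{slide} mechanism keyed on whether the friend $m$ is the anchor (minimum) of its family $F'$. In Case~(E) ($m$ not the anchor) the new bud $(n,m)$ slides left past families with larger anchors until it reaches its left wall, while the diminished family $F=F'\setminus\{m\}$ stays in place. In Case~(D) ($m$ is the anchor) it is $F'$ itself that first slides to its left wall before $m$ is removed, and the bud $(n,m)$ occupies $F'$'s former slot. In the reverse direction, after $n$ is removed from its small family, one tests whether the orphan $(m)$ can slide left from its current position: if not, slide it right to its wall and enter the family just beyond (Case~E$'$); if so, slide it left to its wall and enter the family just to the right (Case~D$'$). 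The anchor/non-anchor dichotomy is what lets the reverse algorithm unambiguously locate the mother family, and the paper's bijectivity proof is a genuine five-case matching (A--E against A$'$--E$'$), with the D/E pair the delicate part. The paper also remarks that several distinct reversible positioning rules exist; this particular one is chosen because it preserves the block structure, which is what your theorem statement demands.

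Two small corrections to your case analysis. First, the parenthetical ``(if singleton) being stripped'' cannot occur: small two-element families have no exit positions, so whenever the friend is extracted the remainder still has at least two elements. Second, ``regular entry'' in the paper is a term reserved for the reverse step (the orphan $x$ re-entering a family, with a forced color choice when its value lies between the highest blue and lowest red); in the forward step one simply inserts $n$ before its friend in the sequence and checks whether the result is a family.
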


The algorithm applies to individual blocks separately, and the relative order and colors within the resulting families depends only on the relative order within a block. The last element of the block, which is its minimum, is in the last set of the corresponding block of the registry. 

As such, the algorithm can be extended to arbitrary injective maps which have finite blocks between successive minima. 

Composed with the cycle transform, the algorithm can be extended to arbitrary permutations of linearly ordered sets which have finite cycles. 

{\bf The idea of the algorithm}   

We create out of a permutation, inductively by inserting the largest element, an ordered list of mutually disjoint families which is a registry. In it the singlet families carry no information and can be stripped.

In the opposite direction of the algorithm, we map such a registry back to a permutation.  We remove the largest element, reduce the remaining registry to a smaller registry, map that registry to a permutation by induction, and insert into the latter the largest element. 

The two transforms are inverse to each other. The input of the reverse algorithm is an arbitrary set of mutually disjoint nonsinglet subsets of $\overline n$, organized as families by specifying the number of ascents of each. The remaining elements of $\overline n$ are singlet families and can be added to the stripped registry before the procedure, on set places.

In the bijection the ascents and descents (reds and blues) of the families in the registries end up precisely as the nonsinglet ascent respectively descent values of the permutation.

After an inverse cycle transform the ascents and descents of families become the values over and respectively under the diagonal, while singlets map to values on the diagonal. This way the algorithm allows us to count and construct all permutations with specified elements under and over the diagonal.

The decomposition of the graph of the permutation into families is well-defined and canonical.  Arranging the families in a linear order, so that they form a registry, leaves a certain amount of freedom. There are several different algorithms which are reversible, i.e., they result in bijections between registries and permutations.  We shall use an algorithm which, although more elaborate, has the advantage that it preserves in the registry much of the structure of the permutation itself.  

There are two distinct aspects of the algorithm, each of which is reversible. 

In the direct algorithm the highest new element of the permutation, when it does not fit into the family of the friend, pulls out its friend to start a new small family of two. In the reverse algorithm, the highest element is taken out of its small family of two, and leaves the friend, the other member, as an orphan in search of a family.

The first aspect concerns the location of the newly created family and of the remaining old family in the registry.  In the reverse algorithm, the orphan looks for and finds a family to enter and then places the newly enlarged family in the registry.  These two processes must be inverse to each other. There are different possibilities for this step which lead to different variants of the algorithm.

The second aspect concerns the position inside its family of the friend who is taken out to form a new small family.  This is the content of the entry/exit lemma.  The positions inside a family from which a friend cannot be taken out are the highest blue and the unique red.  For the reverse algorithm, the orphaned friend enters its newfound family, and finds its relative position and its color in that family (see the illustration).  Only the value (the height) of the orphan is known.  The regular entry algorithm directs the orphan in its new family precisely on the positions other than the highest blue and the unique red.  This ensures that exiting a family from above in the direct algorithm, and entering regularly a family from the side in the reverse algorithm, are inverse to each other.

{\bf Definition} Given a family $F$, $m\in F$ and an element $n>\max F$ we say that $n$ fits (or can enter) into the family $F$ with $m$ as its friend if inserting $n$ immediately before $m$ into the family $F$ written as a sequence satisfies the definition of a sequence family.

Whether $n$ fits in $F$ with $m$ as its friend depends only on $m$, not on $n$. If $F = (m)$, $n$ always fits. In the entry/exit lemma, we called the positions of $m$ for which $n$ does not fit {\bf exit} positions. Let us reformulate that lemma as follows.

Given a nonsinglet family $F$, $m\in F$ and an element $n>\max F$, let $F'$ denote the family $F$ with $m$ removed.
The following are equivalent.

$n$ does not fit into $F$ with $m$ as a friend

$F$ is obtained by the regular entry of $m$ into $F'$.

{\bf Example} 11 fits into $(6,8,9,3,2)$ with 3 as friend, since $(6,8,9,11,3,2)$ is a family, but not with 6 as friend, since $(11,6,8,9,3,2)$ is not a family. The latter property is equivalent by the entry/exit lemma with the fact that inserting regularly $6$ into its complement $(8,9,3,2)$ gives $(6,8,9,3,2)$.

\begin{center}
\includegraphics[width=1.0\linewidth]{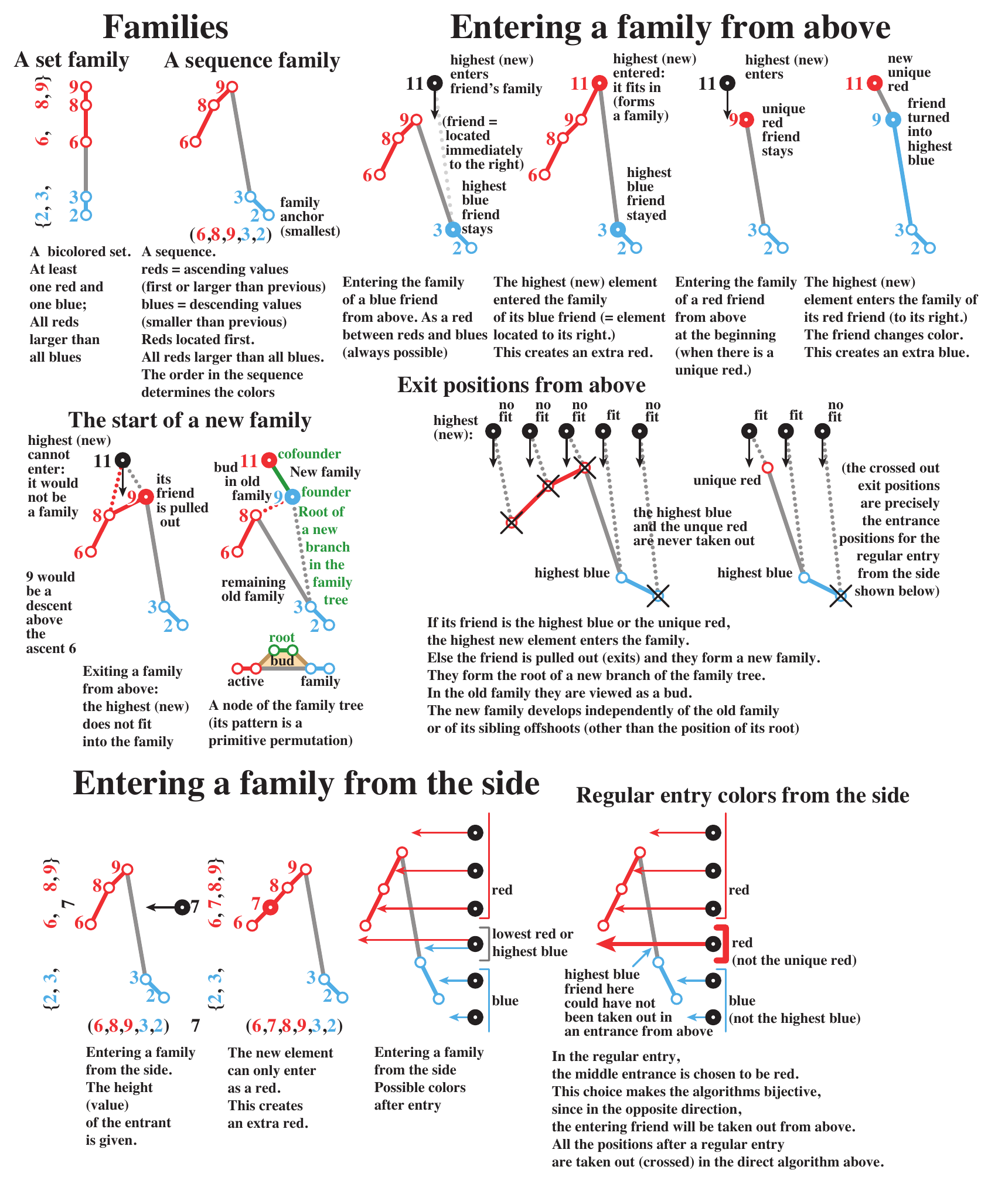}
\end{center}

{\bf Definition} We call the smallest element of a family or block of a permutation or block of a registry the {\bf anchor}. The anchor is located on the last position in a sequence family or permutation, and in the last position of the last family of a family block.

{\bf Definition} Call a family $F$ in a registry ${\cal R}$ in a {\bf left slide} position if it is not on the first position and if the anchor of the family $F'$ immediately to its left is bigger than the anchor of $F$. The name suggests that $F$ can slide under $F'$ when ordered by their anchors. We say that $F$ has {\bf slid to its left wall} if $F$ has exchanged positions in the registry, successively, with families immediately to its left under which it can slide, until it is no longer in a slide position, i.e. is either on the first (leftmost) position, or the family to its left has a smaller anchor than $F$.

Define similarly a {\bf right slide position} for $F$ and the slide of $F$ to its right wall.

{\bf Example} In the registry $((9,3,1),(7,4),(8),(6,5,2))$ the family $(6,5,2)$ is in a left slide position, since $2<8$ and slides left as $((9,3,1),(7,4),(6,5,2),(8))$ and $((9,3,1),(6,5,2),(7,4),(8))$ where it is at its left wall, since $1<2$.
We now start to describe the two directions of the algorithm.

{\bf Permutation to Registry (the direct algorithm)}.

We start with a permutation $\pi$ of $\overline{n}$. Let $\pi'$ be  the permutation obtained by removing from $pi$, written in line notation, its highest value $n$. We assume by induction that a registry ${\cal R}'$ was constructed for $\pi'$. We build a registry ${\cal R}$ for $\pi$.

\textbf{Case (A)}.  If $n$ is last in $\pi$ then we let $n$ form a singlet family $(n)$ which is placed at the right end of the registry ${\cal R}'$ to form the registry ${\cal R}$. 
Remark that Case (A) is precisely the case in which $n$ forms a singlet block in $\pi$. 

If $n$ is not last, the element $m$ which is positioned after $n$ in $\pi$ is called the {\bf friend} of $m$. Let $F'$ be the family of the friend $m$ in ${\cal R}'$.

If $n$ can be inserted into $F'$ with $m$ as friend, we insert $n$ immediately before $m$ to obtain a family $F$. We replace $F'$ in the registry ${\cal R}'$ by $F$ to obtain ${\cal R}$.

We distinguish two cases.

\textbf{Case (B)}. $F'$ is a singlet $(m)$ which by induction will be last in ${\cal R}'$. Inserting $n$ immediately before $m$ produces a last block $(n,m)$ of ${\cal R}$.

\textbf{Case (C)}. $F'$ is not a singlet, and $n$ can be inserted into it in front of $m$ to form a family $F$.

If $n$ does not fit into the family $F'$ with $m$ as friend, $m$ is pulled out of $F'$ to obtain a family $F$, and $m$ together with $n$ form a new family $N = (n,m)$. The old family $F'$ is removed from ${\cal R}'$ and $F$ and $N$ are inserted to form the registry ${\cal R}$. 

For the positions of $F$ and $N$ into ${\cal R}$ we distinguish two cases.

\textbf{Case (D)}. If the friend $m$ was the anchor of its nonsinglet family $F'$, in ${\cal R}'$ replace $F'$ by the pair $(F,N)$. Slide $F'$ (with $m$ in it) to its left wall, then remove $m$ from it to leave $F$. Leave the new family $N$ in the former place of $F'$. This gives ${\cal R}$.

\textbf{Case (E)}. If the friend $m$ was not the anchor of its family $F'$, in ${\cal R}'$ replace $F'$ by the pair $(N,F)$. Slide $N$ to its left wall and leave $F$ in the place of $F'$ to obtain ${\cal R}$.

This ends the direct algorithm.  

{\bf Comments}

Assume by induction that the blocks of ${\cal R}'$ were, in order, transforms by the above algorithm of the blocks of $\pi'$. In Case (A) a new singlet registry block $(n)$ was appended to ${\cal R}'$ at the end, and $\pi'$ has the same $n$ at the end as a block. This comes from the fact that the last successive minimum of both $\pi'$ and ${\cal R}'$ is their last element $n'$, and when appending $n$ the element $n'$ will be the next to last successive minimum, with $n$ the last one.

In the other cases $n$, being largest, does not change the successive minima. In Case (B) the new family $F$ stays in the place of the old one $F'$. In Case (D) and Case (E) the new families $F$ and $N$ never move to the right of the former position of $F'$. The family among $F$ and $N$ which contains the smallest element of $F'$ remains in the place of $F'$. The other one (which could not contain one of the successive minima) slides to the left only past families with a larger minimum or anchor, so it remains within the same block.

Thus the blocks of the registry ${\cal R}$ have as support the blocks of the permutation $\pi$, in order. As the step of the algorithm worked only within a block and depended only on the relative positions and relative values of the elements in it, the algorithm transforms each block $\pi^{(l)}$ of $\pi$ into a block ${\cal R}^{(l)}$ of ${\cal R}$ independently of the other blocks, and the pattern of the resulting ${\cal R}^{(l)}$ depends only on the pattern of $\pi^{(l)}$. 

This shows that it is enough to know the algorithm on single block permutations, i.e. on permutations with 1 on the last position. These will be mapped to registries with one block, having 1 on the last position of the last family. From this, the case of a general permutation $\pi$ is obtained as follows. For each block $\pi^{(l)}$ of $\pi$, let $\phi^{(l)}$ and $\psi^{(l)}$ denote the order preserving maps for which $\widetilde{\pi}^{(l)}=\phi^{(l)}\circ\pi^{(l)}\circ\psi^{(l)}$ is a permutation, the pattern of $\pi^{(l)}$. Let $\widetilde{\cal R}^{(l)}$ be the registry produced by the algorithm from $\widetilde{\pi}^{(l)}$. Let ${\cal R}^{(l)}$ be the registry obtained by applying $\left(\phi^{(l)}\right)^{-1}$ to each element of every family of $\widetilde{\cal R}^{(l)}$. Now concatenate ${\cal R}= \bigcup_l {\cal R}^{(l)}$ to obtain the final registry.

Since the singlet blocks of $\pi$ are mapped into the singlet blocks of $\cal R$ in order, their image carries no information in $\cal R$ and can be stripped off. Conversely, given a registry $\cal R'$ with no singlets and a range $\overline n$, the complement of the range of $\cal R'$ in $\overline n$, organized as singlet families, can be inserted as blocks between the blocks of $\cal R'$ in order to obtain a registry $\cal R$ in which the added elements are singlet blocks.

For the reverse algorithm, registry to permutation, given a stripped registry $\cal R'$ we complete it with singlets inserted as singlet blocks as above to make its range an interval, before proceeding.

\textbf{\bf Registry to Permutation (the reverse algorithm)}.

We start with a registry ${\cal R}$ in which the singlet families form the singlet blocks of ${\cal R}$. We construct, distinguishing cases corresponding to the ones in the direct algorithm, a registry ${\cal R}'$ not containing the highest element $n$ from ${\cal R}$. We construct by induction a permutation $\pi'$ from ${\cal R}'$ and we insert $n$ into $\pi'$ to obtain $\pi$.

We distinguish four cases.

\textbf{Case (A)$'$}. $n$ is in a singlet family $(n)$ of ${\cal R}$. By our hypothesis, $(n)$ is a block of ${\cal R}$, so since $n$ is the largest element in ${\cal R}$, $(n)$ will be placed last in ${\cal R}$. Remove $(n)$ from ${\cal R}$ to obtain ${\cal R}'$, map by induction ${\cal R}'$ to a permutation $\pi'$ and add $n$ at the end of $\pi'$ to get the permutation $\pi$, in which $n$ will form the last block.

In the rest of the algorithm, $n$ is in a family $F$ which is not a singlet. Being largest, $n$ cannot be on the last place in its family.  Call the friend of $n$ the element $m$ immediately to its right in $F$. 
We shall remove $n$, find a family for its friend $m$ and position it in the registry to obtain a registry ${\cal R}'$, map ${\cal R}'$ to a permutation $\pi'$, inductively, and then insert $n$ in $\pi'$ in front of its friend $m$ to obtain $\pi$. 

We describe the way in which we obtain ${\cal R'}$ for the inductive step. 

\textbf{Case (C)$'$}. $n$ is in a large family $F$ of ${\cal R}$. Being largest, $n$ cannot be on the last place in $F$, written as a sequence. Remove $n$ from $F$ to obtain a nonsinglet family $F'$, and call ${\cal R}'$ the registry obtained by replacing $F$ by $F'$ in ${\cal R}$.

Remark that inserting or removing the largest element $n$ on any position other than as a singlet block (which case was treated as Case (A)) does not change the block structure of $\pi$ or ${\cal R}$.

In the remaining cases, $n$ is in a small family $N=(n,m)$ of $ {\cal R}$ with its friend $m$. Removing $n$ from $N$ leaves the friend $m$ as an orphan looking for a family. We distinguish two cases.

If $(m)$ as a singlet family placed in the position of $N$ in ${\cal R}$ is not in a left slide position, then let $(m)$ slide to its right wall. 

\textbf{Case (B)$'$}. If $(m)$ slid right to the last position, keep it as a singlet family there to obtain the registry ${\cal R}'$. Map ${\cal R}'$ inductively to a permutation $\pi'$ in which $(m)$ will be the last block (and thus $m$ is the largest in ${\cal R}'$.) Insert now $n$ before its friend $m$ in $\pi'$ to obtain $\pi$, where $(n,m)$ will be the last block. 

\textbf{Case (E)$'$}. If $(m)$ slid to its right wall and is not last, let $F$ be the family to the right of $(m)$. $F$ cannot be a singlet $(p)$, since in that case $(p)$ would be a singlet block. $N=(n,m)$, being to its left, would be in an earlier block, so the minimum $m$ of $N$ would be smaller than $(p)$ so $(m)$ could have slid past $(p)$ to the right, contradicting the hypothesis that $(m)$ was at its right wall.
So $F$ is not a singlet family. Insert now in a regular way $m$ into $F$ to obtain a large family $F'$ and thus obtain ${\cal R'}$. 

\textbf{Case (D)$'$}. In the remaining case $(m)$ as a singlet family placed in the position of $N$ in ${\cal R}$ is in a left slide position. Make $(m)$ slide to its left wall. Let $F$ be the family immediately to its right. Insert $m$ into $F$ in a regular way to obtain a family $F'$. As $(m)$ has slid past $F$, the anchor of $F$ is larger than $m$. Thus when introduced in $F$, $m$ will become the new minimum, or anchor, of $F'$. Move now $F'$ on the position occupied by $N$ in ${\cal R}$ to obtain ${\cal R'}$.

This ends the inverse algorithm.  

We now prove that the algorithm is bijective, case by case. In the direct case, $n$ is the largest element of a permutation $\pi$, which produces a registry ${\cal R}$. Removing $n$ leaves a permutation $\pi'$ which maps to a registry ${\cal R}'$.
By induction, we may assume that ${\cal R}'$ gives back $\pi'$ by the reverse algorithm. We have to show that ${\cal R}$ gives back $\pi$ by the reverse algorithm.

In Case (A), the new element $n$ is at the end of the permutation $\pi$. In the registry ${\cal R}$, $n$ forms a singlet placed at the end of the registry ${\cal R}'$. In reverse, this is Case (A)$'$ in which $n$ is in a singlet family $(n)$ at the end of ${\cal R}$ and is removed, to obtain ${\cal R}'$. That gives back $\pi'$, in which $n$ is inserted at the end to obtain $\pi$.

In the remaining cases, $n$ is not last in $\pi$ and the element following it is called its friend $m$. $n$ is removed to obtain a permutation $\pi'$ which maps to a registry ${\cal R}'$. We also mapped $\pi$ to a registry ${\cal R}$. 

For the reverse algorithm, we define a friend 
of $n$ in ${\cal R}$. We must make sure that this friend is $m$. 
Then ${\cal R}$ is reduced to a registry not containing $n$. 
We must make sure that this registry is ${\cal R}'$. 

By induction, ${\cal R}'$ is mapped back onto $\pi'$, into which $n$ is inserted in front of its friend $m$, which gives back $\pi$ and ends this part of the proof.

In Case (B), $m$ is last in $\pi'$ and forms a block. Its family $F'$ in ${\cal R}'$ is a singlet $(m)$ positioned at the end, as a block. In ${\cal R}$ the last block is $(n,m)$. Thus in the reverse algorithm, $m$ will be the friend of $n$, and we shall be in the Case (B)$'$ in which after removing $n$, the remaining family is the singlet $(m)$ positioned at the end. This is ${\cal R}'$, and we are done.

In Case (C), $m$ is in ${\cal R}'$ in a nonsinglet family $F'$, $n$ is inserted into $F'$ in front of $m$ to form a large family $F$ which is replacing $F'$ in ${\cal R}'$ to obtain ${\cal R}$. In the reverse direction $n$ is in the large family $F$, so we are in Case (C)$'$. The element following $n$ in $F$ is its friend, and is thus $m$. The new registry is obtained by removing $n$ from $F$, and is thus ${\cal R}'$, and we are done.

In the remaining cases $n$ does not fit into the family $F'$ of $m$ with $m$ as a friend, $m$ is pulled out leaving the family $F$ behind, and forms a new small family $N=(n,m)$. In the reverse algorithm $n$ will be pulled away and its friend will be $m$ which is left as an orphan. We have to show that the registry obtained from ${\cal R}$ is ${\cal R}'$. 

We are in the cases (D)$'$ and (E)$'$, as the other cases were exhausted. In the inverse algorithm, $n$ will be removed from $N$ to leave its friend $m$ an orphan. This is the most subtle part of the argument.

In Case (D), the friend $m$ was the anchor of its nonsinglet family $F'$. In ${\cal R}'$ $F'$ was replaced by the pair $(F,N)$. $F'$ with $m$ in it slid to its left wall, then $m$ was removed from $F'$ to leave $F$ and joined $N$ which stayed in the former place of $F'$ to obtain ${\cal R}$. Note that in this case, as $m$ was the smallest in $F'$, the smallest remaining element of the remaining $F$ will be bigger than $m$. Moreover, $F$ will slide to its left wall past families with even bigger minima. In any case, the family to the left of $m$ has a minimum larger than $m$, and this shows that we are in Case (D)$'$, since $(m)$ can slide to the left. $(m)$ does now slide past families with larger minima to its left wall, which is precisely where it left the rest $F$ of its former family $F'$. $(m)$ makes one last left slide past its former family $F$ which has a minimum larger than $m$, before ending at its left wall. $F$ is now immediately at $m$'s right, where in Case (D)$'$ $m$ finds it. Now $m$ enters $F$ in a regular way. The entry/exit lemma guarantees that $m$ will end up precisely in the place from which it was taken out by $n$ in Case (D), in this case the anchor, or smallest position, and thus the family $F'$ is  reconstituted. Then the family $F'$ is moved on the position of $N=(n,m)$ in $R$ which is precisely the place from which it was taken away in Case (D). So with $F'$, as it was before, in its old place we recovered ${\cal R}'$ and we are done.

In this case the idea was that the family head $m$, in a position of responsibility, could not leave home. So it sent its family on vacation South, to the left (block anchors get smaller, and thus move Southward to the left). To make sure that its family doesn't get lost, $m$ accompanied them all the way to $m$'s left wall and left them there, where they would be easy to find. Back home, $m$ remembered the situation by the fact that there was slide room to its left. Home alone, $m$ started a new family with $n$, but that didn't go very far, since $n$ left.  So $m$ went back, sliding all the way to its left wall, to find its former family there. Its former family gladly gave $m$ the honor position which it used to have. Then $m$ brought them all back to their old homestead, and all was happy like before (which is what bijections are all about.)

In Case (E), the friend $m$ was not the anchor of its nonsinglet family $F'$. In this case $N=(n,m)$ slid to its left wall. Thus after $n$ is removed, $(m)$ cannot slide further left, which shows that we are in Case (E)$'$. So $(m)$ slides right. It moves to its right past all the families under which $(n,m)$ slid left, but cannot slide right past its former family $F$, since $m$ was not the anchor, so $F$ has a smaller minimum than $m$. Thus $m$ has found its former family $F$ at the right of its right wall, and enters it regularly. By the entry/exit lemma, $m$ recovers its former position, to form $F'$ in the place of $F$, which is precisely the place where $F'$ used to be in ${\cal R}'$, and we are done.

In this case the idea was that a family member $m$, not being the family head, took some time off and went on vacation to its left wall. $m$ remembered that by the fact that there was no more room to go to further to its left. While there, $m$ started a family with $n$, but as vacation flings go, $n$ left. So $m$ went back sliding to its right, to be stopped only by its former family. It reentered it in its old place and, once again, all was the way it used to be. A happy ending for bijective proofs.

This ends the bijectivity proof in one direction.

For the other direction, we start with a registry ${\cal R}$. We remove from it the largest element $n$ and change it to a registry ${\cal R}'$. By induction ${\cal R}'$ is mapped onto a permutation $\pi'$ which by the direct algorithm gives back ${\cal R}'$. When $n$ is inserted into $\pi'$ to give $\pi$, and $\pi$ is mapped onto a registry, we must show that that registry is ${\cal R}$.

In Case (A)$'$, $n$ is in a singlet family $(n)$, the last family of ${\cal R}$. ${\cal R}'$ is obtained by removing $(n)$, is mapped into the permutation $\pi'$. $n$ is inserted at the end of  $\pi'$, so we are in Case (A). The new registry is obtained by appending $(n)$ at the end of ${\cal R}'$, and that is precisely ${\cal R}$.

In the remaining cases, $n$ is in a family $F$ (written as a sequence) which is not a singlet, and has a friend $m$ to its right in $F$. $n$ is removed from $F$ to obtain a new registry ${\cal R}'$ which is mapped into the permutation $\pi'$. $n$ is introduced in $\pi'$ immediately before its friend $m$ to obtain $\pi$. By induction, $\pi'$ is mapped back into ${\cal R}'$. We need to show that $\pi$ is mapped into ${\cal R}$.

In Case (C)$'$, the family $F$ of $n$ is large, with $\ge 3$ elements, $n$ is removed from it to get a family $F'$ with $\ge 2$ elements, which replaces $F$ and thus one obtains ${\cal R}'$. So $F'$ is not a singlet. Inserting $n$ back in front of its friend $m$ produces $F$, which is a family. So $n$ can be inserted, and we are in Case (C). We replace $F'$ with $F$ and we obtain the registry ${\cal R}$ from which we started.

In the remaining cases, $n$ forms,  with its friend $m$, a small family $N=(n,m)$ of ${\cal R}$. Removing $n$ from $N$ leaves the friend $m$ an orphan.

If $(m)$ as a singlet family placed in the position of $N$ in ${\cal R}$ was not in a left slide position, we let $(m)$ slide to its right wall.

Case (B)$'$. If $(m)$ was on the last position, we kept it as a singlet family there to obtain the registry ${\cal R}'$. We mapped ${\cal R}'$ inductively to a permutation $\pi'$ in which $(m)$ will be the last block (and thus $m$ is the largest in ${\cal R}'$.) We inserted $n$ before its friend $m$ in $\pi'$, to obtain $\pi$ where $(n,m)$ will be the last block. This is precisely the Case (B). $\pi'$ is mapped to ${\cal R}'$, and the registry obtained by replacing $(m)$ by $(n,m)$ in ${\cal R}'$ is precisely the ${\cal R}$ we started from, so $\pi$ is mapped into ${\cal R}$.

Again, the remaining cases are more delicate. We started from ${\cal R}$ in which $n$ was in a small family $N=(n,m)$. We removed $n$ and were left with $(m)$.

Case (E)$'$. $(m)$ could not slide left, so $(m)$ slid to its right wall and was stopped by a family $F$ from sliding further right. $F$ was a nonsinglet family into which $m$ was inserted in a regular way, to obtain a large family $F'$. With $F'$ in the place of $F$, we called the registry  ${\cal R'}$. 

Note that to get back $\cal R$ out of ${\cal R}'$, we pull $m$ out of $F'$, leave the remaining $F$ in place of $F'$, start a small family $(n,m)$and slide it to its left wall.

${\cal R'}$ was inductively mapped to a permutation $\pi'$ which was then sent inductively back to ${\cal R}'$. We have to show that the permutation $\pi$ obtained by inserting $n$ in front of its friend $m$ is mapped by the direct algorithm back into $\cal R$.

Since $m$ was inserted in a regular way into $F$ to get $F'$, by the entry/exit lemma $m$ is in an exit position in $F'$, that is, when $n$ is placed in front of $m$ in $F'$, $n$ will not fit and will pull its friend $m$ out of its family to leave back $F$. In ${\cal R'}$, $(m)$ could not slide past $F$, so $m$ was larger than the minimum of $F$. So $m$ was not the anchor of $F'$, which places us in Case (E). 

In Case (E), $F'$ had $m$ pulled out by $n$. So  after pulling $m$ out of $F'$ we get back $F$. In Case (E)  $N=(n,m)$ was slid to its left wall (where it cannot slide further left), and $F$ replaced $F'$ to get a new registry, so sliding back $m$ to its right wall it finds $F$ and enters it regularly, by the entry/exit lemma on its former position in $F'$ . These operations are precisely the reciprocal for Case (E)$'$, 

From our original $\cal R$, in Case (E)$'$ we obtained  ${\cal R}'$ from $\cal R$ by removing $n$ from its small family $(n,m)$, by having the orphaned $m$, which could not slide left, slide right to its right wall, in front of a family $F$ into which $m$ was inserted regularly to get a family $F'$. Thus reciprocally, to get $\cal R$ from ${\cal R}'$, we remove $m$ from $F'$ to leave $F$, form the new family $(n,m)$, and slide it to the left wall where it cannot slide further left, and from where, when $(m)$ is sliding right, it will be stopped by $F$. These are precisely the transformations in Case (E), so those lead back to our original registry $\cal R$.

Case (D)$'$.  Here $(m)$ as a singlet family placed in the position of $N$ in ${\cal R}$ is in a left slide position. Make $(m)$ slide to its left wall. Let $F$ be the family immediately to its right. Insert $m$ into $F$ in a regular way to obtain a family $F'$. As $(m)$ has slid past $F$, the anchor of $F$ is larger than $m$. Thus when introduced in $F$, $m$ will become the new minimum, or anchor, of $F'$. Move now $F'$ on the position occupied by $N$ in ${\cal R}$ to obtain ${\cal R'}$.

The reverse of these operations is the following, giving back $\cal R$ out of ${\cal R'}$. Take the family $F'$ and slide it to its left wall. Remove $m$ from it to remain with $F$. Slide $m$ to the former position of $F'$ and insert $n$ to make the family $(n,m)$ there. This gives back ${\cal R}$.

As $m$ was the anchor of $F'$ and $n$ did not fit before its friend $m$ into $F'$ (since $m$ had entered regularly into $F$ to get $F'$) we are in Case (D). The operation in that case for getting $\cal R$ out of ${\cal R}'$ are precisely the ones we described earlier as reversing Case (D)$'$ to get the original $\cal R$ out of ${\cal R}'$. So Case (D) gives back our original  $\cal R$ and we are finished.

This ends the proof of the bijectivity of the algorithm.

The algorithm put arbitrary permutations of $\overline n$ in bijection with registries in which singlets were blocks. By the stripping lemma, the latter are in bijection with arbitrary stripped registries, and the main theorem is proved.

The pattern of a nonsinglet permutation block is a permutation of $\overline p$ which ends in 1. By the inverse cycle transform, this corresponds to a nonsinglet single cycle permutation. The algorithm, as we described it, maps single blocks into single block registries, i.e. registries having 1 in the last family. The number of such registries, for a given number total number $k$ of reds, or ascent values and $l$ of blues, or descent values, is the shifted multinomial $N^{(-1)}_{k,l}$, since all families except for the one containing 1 can be positioned arbitrarily. The shifted multinomials $N^{(-1)}_{k,l}$ are the Eulerian numbers $E_{k,l}$.

In general the algorithm as given works on block patterns. That is, in order to map an arbitrary permutation $\pi$, decompose $\pi$ into blocks $\pi^{(l)}$. Bring each block, of size $n_l$,  to its pattern, which is a permutation of $\overline{n_l}$ which ends in 1. Lift back the resulting registry ${\cal R}^{(l)}$ to the original values taken by $\pi^{(l)}$, and then concatenate the results in order.

In what follows we write permutations in line notation, though written enclosed in parentheses.
We define inductively a primitive permutation 

{\bf Definition} A permutation $\pi$ of $\overline n$ is called {\bf primitive} and has active family ${\cal A}(\pi)$ if 

$n=2$ and $\pi = {\cal A}(\pi) = (2,1)$

or $n>2$ and denoting by $\pi'$ obtained from $\pi$ by removing $n$,

the permutation $\pi'$ is primitive, and 

$n$ is followed in $\pi$ by $m\in {\cal A}(\pi')$ its friend, and either

{\bf Case (A)} ${\cal A}(\pi')$ with $n$ inserted (in the order from $\pi$) is a family, which is then ${\cal A}(\pi)$, or

{\bf Case (B)} ${\cal A}(\pi')$ with $n$ inserted is not a family, and then ${\cal A}(\pi)$ is ${\cal A}(\pi')$ with $m$ removed.

In the latter case, $(n,m)$ is a small family called a {\bf bud} of $\pi$. The buds of $\pi$ are the buds obtained at all the steps, when building $\pi$ out of $(2,1)$.
The buds are mutually disjoint, and their complement in $\pi$ is the active family ${\cal A}(\pi)$. 

The {\bf pairing pattern} ${\cal P}(\pi)$ of the primitive permutation $\pi$ is a sequence of integers of length $n$, defined inductively as follows.

If $\pi = (2,1)$ then ${\cal P}(\pi)=(1,2)$, and if $n>2$, let $b$ denote the number of buds, and $f$ the length of ${\cal A}(\pi)$, so $2b + f=n$

in Case (A) the pattern ${\cal P}(\pi)$ is obtained by appending $b+f$ to ${\cal P}(\pi')$.
in Case (B) the pattern ${\cal P}(\pi)$ is obtained by appending to ${\cal P}(\pi')$ the negative number $-{\cal P}(\pi')(m)$  (where ${\cal P}(\pi')(m)$ is the $m$-th entry of ${\cal P}(\pi')$.)

In Case (A), the last entry in ${\cal P}(\pi)$ is positive, not paired, on the $n$-th place with $n$ part of the active family ${\cal A}(\pi)$.
In Case (B), if $-p$ is the last entry in ${\cal P}(\pi)$, the pair $(p,-p)$ in ${\cal P}(\pi)$ is on positions $(m,n)$ and thus we have.

Define a pairing sequence ${\cal P}=(p_1,p_2,\dots,p_n)$ as follows.

The first 2 entries of ${\cal P}$ are $(1,2)$.

The positive numbers appear in natural increasing order $\{1,2,\dots \}$ in ${\cal P}$, 

Each negative number is paired with a previous positive number, with different negative numbers having different pairs.

The sum of the signs of the entries on positions $3,\dots,k$ of ${\cal P}$ is nonnegative, for all $k$.

Let $sgn=sgn({\cal P})$ denote the sequence of signs of ${\cal P}$. For each descent $sgn(i)=-1$ there are $\sum_{j<i} sgn(j)$ choices for ${\cal P}(i)$. 

We may think thus of $(sgn(3), sgn(4),\dots)$ as a Dyck path having nonnegative partial sums, with a decoration of any descent by a number between 1 and 2 + the path height before the descent. 

Remark now that the pairing sequence ${\cal P}(\pi)$ of a primitive permutation $\pi$ satisfies the above definition of a pairing sequence.

The sum of the signs of the numbers is the size $f$ of the active family ${\cal A}(\pi)$, and we always have $f \ge 2$, as $f=k+l$ with $k$ and $l$ the number of reds and blues (ascent values and descent values) of ${\cal A}(\pi)$, and $k,l\ge 1$

The positions of the unpaired positive numbers in ${\cal P}(\pi)$ are the numbers in the active family ${\cal A}(\pi)$.

The number of negative numbers in ${\cal P}(\pi)$ is the number of buds $b$. 

The positions of each pair $(i,-i)$ in ${\cal P}(\pi)$ form a bud of $\pi$.

Our main result is the following.

\begin{thm}
Pairs $({\cal P},(k,l))$ between a pairing sequence ${\cal P}$ and numbers $k,l\ge 1$ with $k+l$ equal to the sum of signs of ${\cal P}$ are in natural bijection with primitive permutations.

The bijection matches $({\cal P},(k,l))$ with a primitive permutation $\pi$ having pattern ${\cal P}(\pi)={\cal P}$ and with an active family ${\cal A}(\pi)$ having $k$ reds and $l$ blues.
\end{thm}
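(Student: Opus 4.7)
The plan is to prove the theorem by induction on $n$, constructing the bijection explicitly in both directions. The base case $n = 2$ is immediate: $\pi = (2, 1)$ pairs with ${\cal P} = (1, 2)$ and $(k, l) = (1, 1)$.

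For the forward direction $\pi \mapsto ({\cal P}(\pi), (k, l))$, all axioms of a pairing sequence follow directly from the inductive definition of ${\cal P}(\pi)$: positives are appended in natural increasing order (one per Case A step), each negative is the negation of the friend's pattern value, and the partial sum $\sum_{j=3}^{i} \mathrm{sgn}(p_j) = |{\cal A}(\pi_i)| - 2 \geq 0$ holds because $k_i, l_i \geq 1$ at every step. The sum of signs equals $|{\cal A}(\pi_n)| = k + l$.

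For the reverse direction, I first determine the final active family as the set of positions carrying unpaired positives in ${\cal P}$, colored so the top $k$ are red and the bottom $l$ are blue. Then I process ${\cal P}$ from position $n$ down to $3$, undoing each step to recover every intermediate family ${\cal A}(\pi_i)$, and finally rebuild $\pi$ forward from $\pi_2 = (2,1)$. The undo rules are deterministic thanks to a structural observation: in forward Case A, the new largest element can be inserted into an active family with $k \geq 2$ only in the unique position $j = k + 1$, yielding $k' = k+1$, while for $k = 1$ two positions are available, corresponding to a no-demote choice ($k' = 2$) and a demote choice ($k' = 1$ with the old unique red turned blue). Hence in reverse Case A one removes $i$; if $k_i \geq 2$ the coloring is inherited, while if $k_i = 1$ one promotes the current largest blue back to red to invert the demote. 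In reverse Case B, the friend $m$ is identified from the pairing $-p$, and the entry/exit lemma dictates that inserting $m$ into the current active family by regular entry yields the unique coloring placing $m$ in an exit position of the enlarged family, as required by the primitive definition.

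The main obstacle is twofold. First, I must verify that the intermediate $(k_i, l_i)$ remain $\geq 1$ throughout: reverse Case B cannot decrease $k$ or $l$, and reverse Case A with $k_i = 1$ decreases $l$ by one but requires $|{\cal A}_i| \geq 3$ by the partial-sum axiom, so $l_i \geq 2$ and thus $l_{i-1} \geq 1$. Second, I must verify that the forward rebuild from the computed data produces a primitive permutation whose derived pair $({\cal P}(\pi), (k, l))$ agrees with the input. This follows by matching each clause of the inductive definition of primitive permutations with the corresponding undo step, using the entry/exit lemma (for Case B friends) and the insertion-position analysis (for Case A) to confirm that forward and reverse operations are mutual inverses on each branch. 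This closes the induction and establishes the bijection.
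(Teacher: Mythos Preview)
Your proposal is correct and follows essentially the same inductive strategy as the paper: both arguments peel off the last entry of the pairing sequence, determine how the red/blue counts of the active family change in each of the four cases (positive entry with $k=1$, positive entry with $k\ge 2$, negative entry removing a blue, negative entry removing a red), and verify that each step is invertible. The paper organizes this as a pure recursion on $(\text{inds},n_{\text{Reds}})$ that builds $\pi'$ and then inserts $n$, while you unroll the recursion into a backward pass computing all $(k_i,l_i)$ followed by a forward rebuild; the content is the same. One cosmetic difference: for the bud case the paper locates the friend via the rank $n_{\text{choice}}$ among unpaired positives and compares it to $n_{\text{Blues}}$, whereas you read off the friend's value $m$ directly from the pairing and invoke the entry/exit lemma to recover its color by regular entry --- these are equivalent, and your phrasing is arguably cleaner. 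Your justification that $|\mathcal A_i|\ge 3$ in reverse Case~A with $k_i=1$ is slightly elliptic (it uses $|\mathcal A_{i-1}|\ge 2$ from the partial-sum axiom together with $p_i>0$, not the axiom at $i$ alone), but the conclusion is correct.
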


It is implemented by an algorithm which constructs the primitive permutations directly, without going through all permutations.

The singlet families in the registry thus carry no information other than the subset which consists of their union.  The algorithm establishes a bijection between permutations and registries consisting of families with $\ge 2$ elements, i.e., mutually disjoint subsets with $\ge 2$ elements, each with a specified, nonzero number of ascents and descents.  Singlet families are inserted into such a registry in a canonical way, with each singlet starting with the smallest one inserted immediately to the right of the set of families in the registry with minima smaller than it.  

Remark that the algorithm described above maps each block of the permutation into a block of the registry, independently of all the rest.  Thus the algorithm applies to permutations of the integers $\mathbb{Z}$ which have finite support, or by the inverse cycle transform, to permutations with a finite number of cycles.  More generally, the algorithm applies to permutations of $\mathbb{Z}$ with finite cycles, by transforming each cycle individually into a block of the registry.

{\bf An example.} In the sequel, we mark ascents in red and descents in blue for clarity. We start with the permutation 
$\rr{6}\textcolor{black}{2}\rr{5}\bb{1}\bb{4}\rr{7}\bb{3}$, 
with \rr{567} over diagonal in red, 2 on the diagonal in black and \bb{134} under the diagonal in blue.
Its cycle transform is 
$\pi = \rr{6}\rr{7}\bb{3}\rr{5}\bb{4}\bb{1}\textcolor{black}{2}$, with \rr{567} now ascent values in red, 2 a singlet block in black and  \bb{134} descent values in blue. We work with the cutoff $\pi^{(k)}$ of $\pi$ to values $\le k$, for $k=1,2,\dots$, and think, following Euler, of $\pi^{(k)}$ as obtained from $\pi^{(k-1)}$ by inserting the largest element $k$. We construct the families and the registries (lists of families) ${\cal R}^{(k)}$ corresponding to each $\pi^{(k)}$. 

Start with the singlet $\pi^{(1)}=1$ so ${\cal R}^{(1)} = ((1))$. The next element is 2, with permutation $\pi^{(2)}=12$. As 2 is last, it is inserted in the registry as the singlet (2) at the end, so ${\cal R}^{({2})}$ $ = ((1),(2)).$ The next cutoff is $\pi^{(3)}=\rr{3}\bb{1}2$. The friend of the largest element 3 is its right neighbor 1, with family $(1)$. 3 joins it and gives the family $(\rr{3}\bb{1})$, remaining in the place of $(1)$, in the first place in the registry, which now becomes ${\cal R}^{({3})}= ((\rr{3}\bb{1}),(2)).$ Then $\pi^{(4)}=\rr{3}\rr{4}\bb{1}2$. The friend of the largest element 4 is its right neighbor 1, with family $(\rr{3}\bb{1})$. With 4 inserted, that family becomes $(\rr{3}\rr{4}\bb{1})$, which is a family.  So ${\cal R}^{({4})}= ((\rr{3}\rr{4}\bb{1}),(2)).$

\begin{center}
\includegraphics[width=6.1 in]{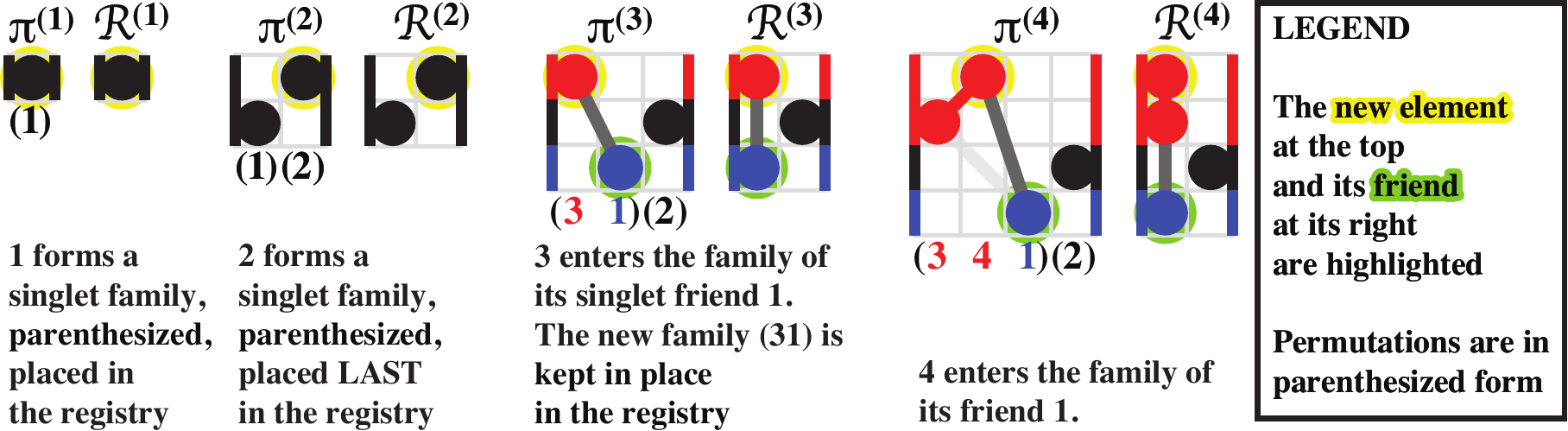}
\end{center}

The next step is $\pi^{(5)}=\rr{3}\rr{5}\bb{4}\bb{1}2$. The largest element 5 is followed by the friend 4. When inserted into the family $(\rr{3}\rr{4}\bb{1})$ of its friend, we obtain $(\rr{3}\rr{5}\bb{4}\bb{1})$.  This is not a family, as the descent $\bb{4}$ is bigger than the ascent \rr{3}. So 5 pulls its friend 4 out and they start a new family $\rr{5}\bb{4}$.  As the element $4$, taken away, is not the anchor of or minimum of its family, that family remains in place and the new family $(\rr{5},\bb{4})$ is sliding to its left wall, i.e., until it can move no further under elements with an anchor bigger than 4.  In this case, the left wall is the left end of the registry, and $(\rr{5},\bb{4})$ ends up in the first place in the registry.  Note that the idea behind this choice of position is the following.  In principle, in the reverse algorithm, 4 will look for its old family, which could be to its left or to its right.  The fact that $(\rr{5},\bb{4})$ cannot slide further left is a giveaway for the fact that it was not an anchor of its family, and that it must slide back right, stopped by its old family, which it will then rejoin.

Thus, the new family $(\rr{5},\bb{4})$ is placed first in the registry, to give ${\cal R}^{({5})}= ((\rr{5}\bb{4}),(\rr{3}\bb{1}),(2)).$  In the next step, $\pi^{(6)}=\rr{6}\bb{3}\rr{5}\bb{4}\bb{1}2$, 6 enters the family of its friend 3, with ${\cal R}^{({6})}= ((\rr{5}\bb{4}),(\rr{6}\bb{3}\bb{1}),(2)).$ In the last step $\pi = \pi^{(7)}=\rr{6}\rr{7}\bb{3}\rr{5}\bb{4}\bb{1}2$, 7 enters the family of its friend 3, with ${\cal R}={\cal R}^{({7})}= ((\rr{5}\bb{4}),(\rr{6}\rr{7}\bb{3}\bb{1}),(2)).$

\begin{center}
\includegraphics[width=6.1 in]{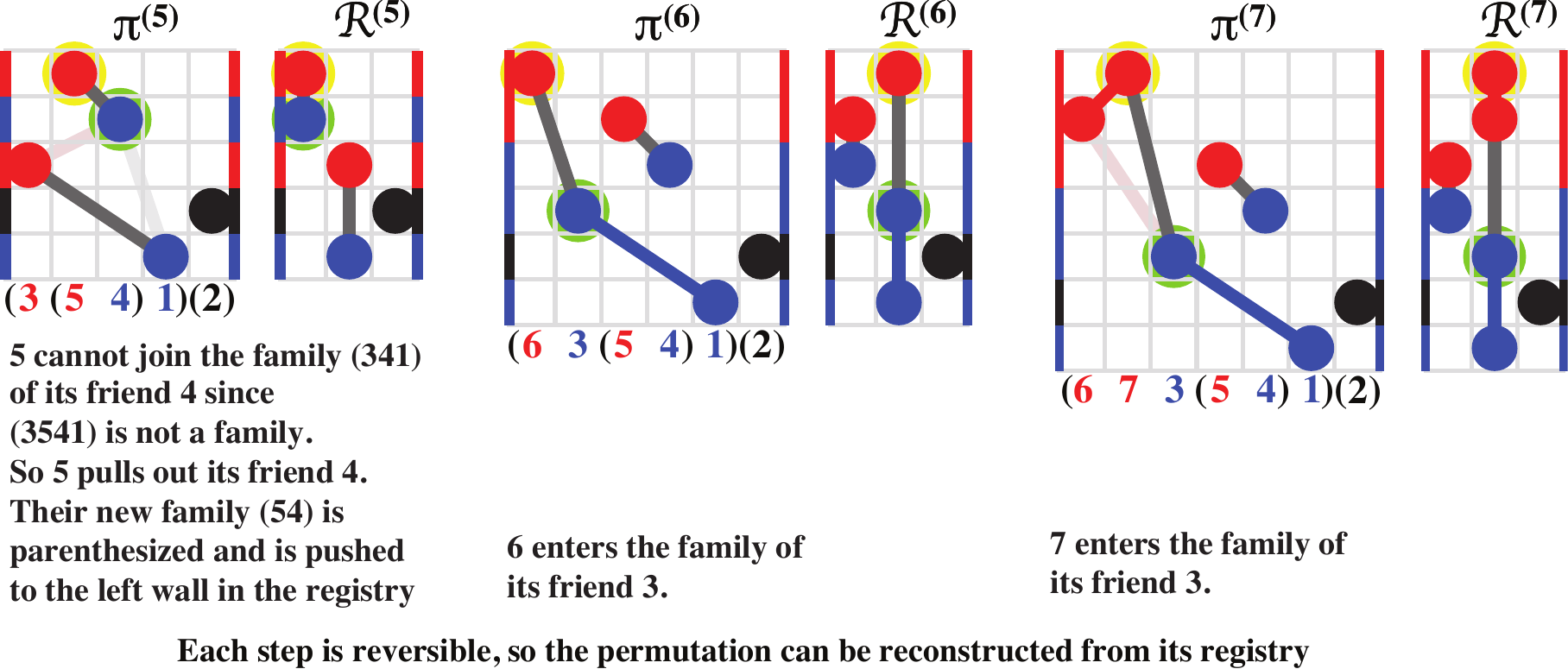}
\end{center}

For the reverse algorithm, we construct out of each registry a smaller registry, by removing its largest element, and which we map to a permutation by induction. Then we insert the largest element into that permutation.

We check the steps as follows. The registry ${\cal R}^{(1)} = ((1))$ gives the permutation $\pi^{(1)}=1$. ${\cal R}^{({2})}  = ((1),(2))$ gives the previous registry ${\cal R}^{(1)} = ((1))$, and then $\pi^{(2)}=12$ by inserting the singlet 2 at the end of $\pi^{(1)}$. 
The registry  ${\cal R}^{({3})}= ((\rr{3}\bb{1}),(2))$ leaves, after removing the largest element 3, its orphaned friend 1 which succeeded it in the family (\rr{3}\bb{1}). The orphan is left in place, so ${\cal R}^{({2})}  = ((1),(2))$. By our previous step, inductively, ${\cal R}^{({2})}$ corresponds to $\pi^{(2)}=12$.  The largest 3 is placed in the permutation $\pi^{(2)}=12$ immediately preceding its friend 1, to obtain $\pi^{(3)}=\rr{3}\bb{1}2$.  Now ${\cal R}^{({4})}= ((\rr{3}\rr{4}\bb{1}),(2))$ has largest element 4 followed by its friend 1. With 4 removed from its (large) family (\rr{3}\rr{4}\bb{1}) we get ${\cal R}^{({3})}= ((\rr{3}\bb{1}),(2))$ which by induction, as we saw above, gives the permutation $\pi^{(3)}=\rr{3}\bb{1}2$. In $\pi^{(3)}$, 4 is inserted before its friend 1 to get $\pi^{(4)}=\rr{3}\rr{4}\bb{1}2$. Take now ${\cal R}^{({5})}= ((\rr{5}\bb{4}),(\rr{3}\bb{1}),(2)).$ Its largest element 5 has friend 4. Removing 5 leaves 4 orphaned.  4 cannot slide further left, so it will slide right to find its family as (\rr{3}\bb{1}), which it enters, to give (\rr{4}\bb{3}\bb{1}), with 3 changing color from red to blue.  The registry is now ${\cal R}^{({4})}= ((\rr{3}\rr{4}\bb{1}),(2))$. By induction we showed that it corresponds to $\pi^{(4)}=\rr{3}\rr{4}\bb{1}2$, and we insert into it 5 before its friend 4, to get $\pi^{(5)}=\rr{3}\rr{5}\bb{4}\bb{1}2$. In ${\cal R}^{({6})}= ((\rr{5}\bb{4}),(\rr{6}\bb{3}\bb{1}),(2))$ the largest 6 has friend 3.  With 6 removed we get ${\cal R}^{({5})}= ((\rr{5}\bb{4}),(\rr{3}\bb{1}),(2))$ which we showed corresponded to $\pi^{(5)}=\rr{3}\rr{5}\bb{4}\bb{1}2$. Inserting 6 before its friend 3 in $\pi^{(5)}$ gives $\pi^{(6)}=\rr{6}\bb{3}\rr{5}\bb{4}\bb{1}2$. Finally ${\cal R}={\cal R}^{({7})}= ((\rr{5}\bb{4}),(\rr{6}\rr{7}\bb{3}\bb{1}),(2))$ has its largest element 7 in a large family (\rr{6}\rr{7}\bb{3}\bb{1}) having as friend its successor 3. With 7 removed, the registry is ${\cal R}^{({6})}= ((\rr{5}\bb{4}),(\rr{6}\bb{3}\bb{1}),(2))$ corresponding by induction, as we saw, to $\pi^{(6)}=\rr{6}\bb{3}\rr{5}\bb{4}\bb{1}2$.  By inserting 7 in front of its friend 3, we obtain the final result of the inverse algorithm, the permutation $\pi = \pi^{(7)}=\rr{6}\rr{7}\bb{3}\rr{5}\bb{4}\bb{1}2$, which corresponds to the registry ${\cal R}=((\rr{5}\bb{4}),(\rr{6}\rr{7}\bb{3}\bb{1}),(2))$.

The inverse algorithm starts in fact backwards, with the registry ${\cal R} = {\cal R}^{({7})}$ and proceeds to define the registries ${\cal R}^{({6})},\dots,{\cal R}^{({1})}$ inductively as above, and then constructs $\pi^{(1)},\dots,\pi^{(7)}=\pi$ as described.

At each step $k$ the ascent/descent colors in $\pi^{(k)}$ match the colors within the families in ${\cal R}^{({k})}$.

The inverse algorithm thus reverses the map from $\pi$ to $\mathcal{R}$ from the direct algorithm.

A family registry has singlets, which are elements of the permutation which form a block of 1 between successive minima, as singlet blocks of the registry, again as blocks of 1 between successive minima of the registry, as such their position is entirely determined by the nontrivial families in the registry.  If we want to have a bijection between permutations and unrestricted registries, we have to strip these singlets away from these unrestricted registries.  The reverse algorithm requires that singlets be present in the correct places, but as their positions are determined by the other families, the missing singlets can be inserted in place at the beginning of the algorithm.  A stripping and reinsertion algorithm for singlets is part of the implementation of the main theorem.  We formalize it in this paper as the stripping lemma.

{\bf Essentially the algorithm decomposes a permutation $\pi$ of $\overline n$ into an arbitrary ordered list $((F_1,a_1),\dots,(F_k,a_k))$ where $F_1,\dots ,F_k\subset \overline n$ are mutually disjoint and $1\le a_i<|F_i|$ for all $i$.}
The complement of $\bigcup_i F_i$ consists of singlets. The largest $a_i$ elements in each $F_i$ are its ascents, the rest its descents. The algorithm maps these ascents and descents of families into the ascent-values $\pi_i>\pi_{i-1}$ respectively descent-values $\pi_i<\pi_{i-1}$ of $\pi$, with singlets becoming the singlet blocks.  In the inverse cycle transform $\sigma$ of $\pi$ these become elements $\sigma_i $ with $\sigma_i \gtrless i$ and $\sigma_i = i$ respectively. The registries can be easily counted, resulting in the multinomial type formulae given at the beginning.

\section{The parenthesized form and the tree form of a permutation} The algorithm presented in the previous section reveals information on the internal structure 
of a permutation, decomposing it into a tree. Every node of the tree is called a primitive permutation. 

A permutation decomposed into families, as seen in the illustrations, has a two dimensional structure, with families positioned vertically, each above an opening in a family below, in a tree structure. This structure can be captured in a very natural manner, by grouping the permutation into nested parentheses.   

We construct the parenthesized form of a permutation $\pi$ as follows.  The outermost parentheses enclose the blocks of the permutation, which by inverse cycle transform correspond to cycles.  Then, every time a new family is started in the algorithm, that family starts a new tree branch, and we enclose that new family in parentheses.   

The result of this algorithm is the parenthesized form of the permutation $\pi$. 

Remark that, while the main bijection depends on a choice of algorithm for ordering new families in a linear order in the registry, the parenthesized form of the permutation, is, by contrast, canonical, as are the tree and the primitive permutation structures which are derived from it.

In our previous example, the parenthesized form of $\pi=\rr{6}\rr{7}\bb{3}\rr{5}\bb{4}\bb{1}2$ is constructed stepwise as follows.  We start with the singlet 
$\pi^{(1)}=(1)$, then $\pi^{(2)}=(1)(2)$, then $\pi^{(3)}=(\rr{3}\bb{1})(2)$, $\pi^{(4)}=(\rr{3}\rr{4}\bb{1})(2)$. As 5 starts a new family with its friend 4, we have $\pi^{(5)}=(\rr{3}(\rr{5}\bb{4})\bb{1})(2)$.  Afterwards $\pi^{(6)}=(\rr{6}\bb{3}(\rr{5}\bb{4})\bb{1})(2)$ and $\pi = \pi^{(7)}=(\rr{6}\rr{7}\bb{3}(\rr{5}\bb{4})\bb{1})(2)$.

The families in the registry ${\cal R}= ((\rr{5}\bb{4}),(\rr{6}\rr{7}\bb{3}\bb{1}),(2))$ are now visible in the body of the permutation $\pi = (\rr{6}\rr{7}\bb{3}(\rr{5}\bb{4})\bb{1})(2)$, as the content of each parenthesis from which the inner parentheses and their content have been removed.

The outermost parentheses are the blocks of the permutation $\pi$, corresponding to the cycles of the inverse cycle transform of $\pi$.

The figure below describes the graph of a permutation on the left and its cycle transform in the center.  Both are decomposed into families with elements over diagonal, respectively ascents, in red and elements under diagonal, respectively descents, in blue. Families are ordered in the registry on the right.  The family tree associated to the permutation is placed underneath. The permutation in the center is written in parenthesized form, reflecting the tree structure underneath.  The tree structure is also shown on the graph of that permutation, in the background. Roots and buds of each node are marked in green with branches marked in brown.  In the graph, the active family of each node is connected by a light purple line to that node.

\begin{center}
\includegraphics[width=6.2 in]{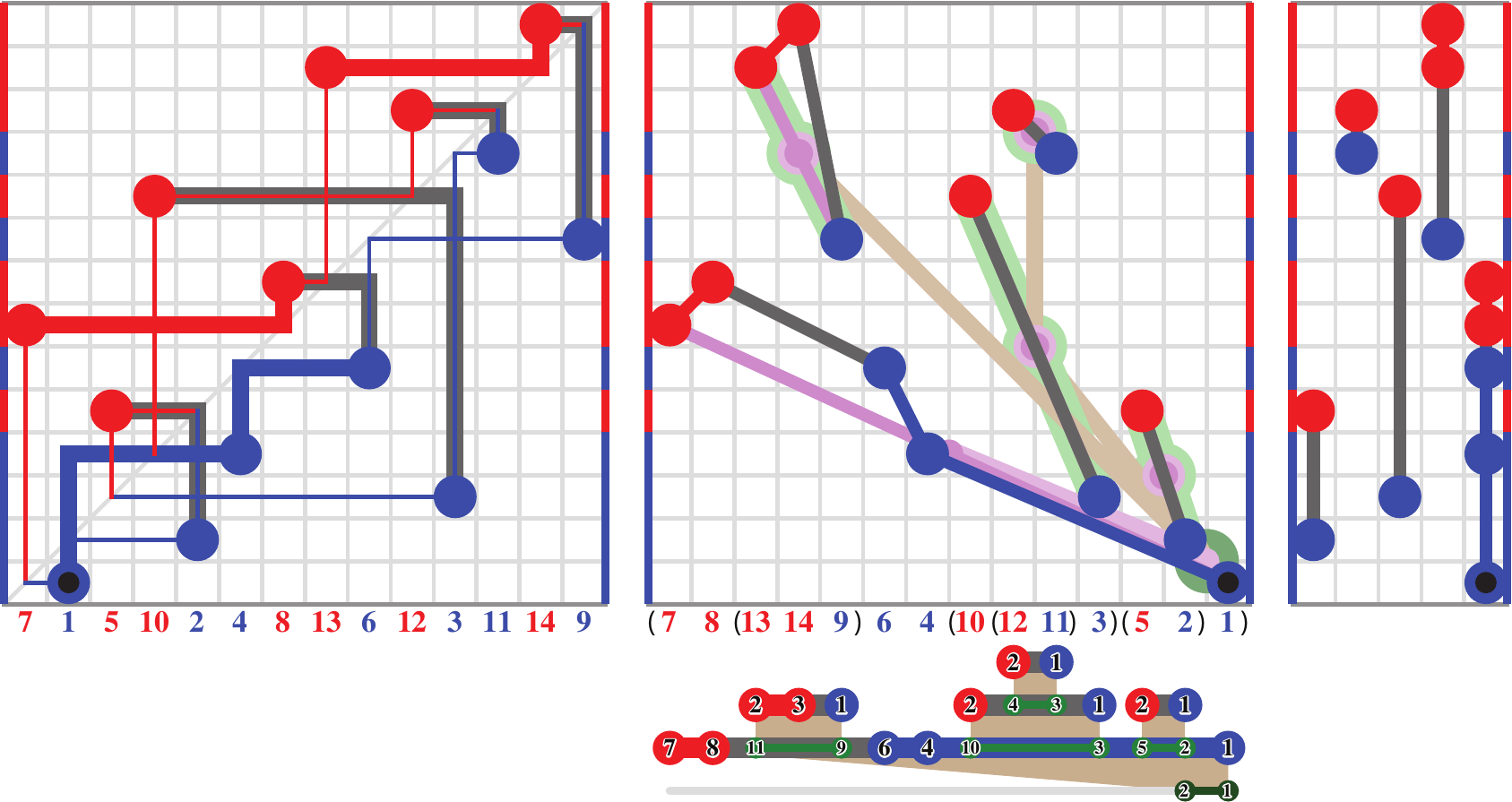}
\end{center}

\section{The multiparenthesized form of a permutation}  Given a permutation $\pi$, the main algorithm constructs a registry of families, a list of nonsinglet families, arranged in linear order, which encode the permutation $\pi$.  We can consider now the permutation $\pi^{(1)}$ which encodes the order of the families in the registry, relative to the standard order in which the minima of the families are increasing.  In turn, $\pi^{(1)}$ can be subjected to the same algorithm, will produce a registry, and a permutation $\pi^{(2)}$, encoding the order of the families of $\pi^{(1)}$ in the registry.  We can continue this way until we reach a family which consists only of singlets.

It is clear that the previous sequence of registries encodes the original permutation $\pi$.  We do not need, however, the order of families in any of the registries, since that is encoded inductively in the next permutations.  The only information we need to keep track of is the number of reds and blues in each family, in all iterations.  We obtain this way the following data, which encodes, bijectively, any permutation $\pi$ of the set $\overline{n}=\{1,\ldots,n\}$.

Partition the set $\overline{n}$ into nonempty subsets.  For each nonsinglet family, specify the number of reds and blues (e.g., by coloring a number of its elements starting from the top as red, with the rest blue).

Partition the set of nonsinglet sets from the previous step.  Specify for each nonsinglet at this step a number of reds and blues (e.g. by coloring some of its members red, starting with the biggest ones, as arranged in increasing order of the smallest number in each member).  We mark this second partition, by enclosing the previous partition in parentheses, and we mark the colors by coloring these parentheses.

At each subsequent step, partition the nonsinglets of the previous partition, enclose each nonsinglet part in a parenthesis, and color the immediate inner parenthesis within that red or blue, as before.  Repeat this until all elements of the last partition are singlets.  This will encode the original permutation $\pi$.

The idea is that if we put the elements of a set into bags with $\ge 2$ elements each, leaving out some singlets, and we mark on each bag a number of reds, then we put bags into bigger bags, each with $\ge 2$ bags inside, leaving aside some singlets, and mark on each bigger bag a number of reds, and we continue this way until there are only singlets left, the data in the successive partitions, together with the number of reds on each bag, encode precisely a permutation of the original set.

\textbf{Example.} For the identity permutation, the set consists of singlets and no bags are needed.  If the permutation is the reverse, $(n,n-1,\ldots,1)$, then we need one bag, with one red and the others blue.

In the picture of the large permutation attached in the appendix, the idea of the multiparenthesized form is to start with the permutation on the upper middle board, and continue with the permutation on the lower right board, describing the order of the families, and then proceed from the latter inductively, until the lower right board becomes trivial, i.e., there are $\le 1$ nonsinglet families.  Thus, the main idea of the multiparenthesized form of a permutation is that by marking the number of reds and blues of families in successive registries, we no longer need to specify the relative order in any of the registries.  In other words, with our algorithm, repeated bicolored partitions encode the relative order of families in registries, and thus ultimately the original permutation itself.

\begin{center}
\includegraphics[width=1.0\linewidth]{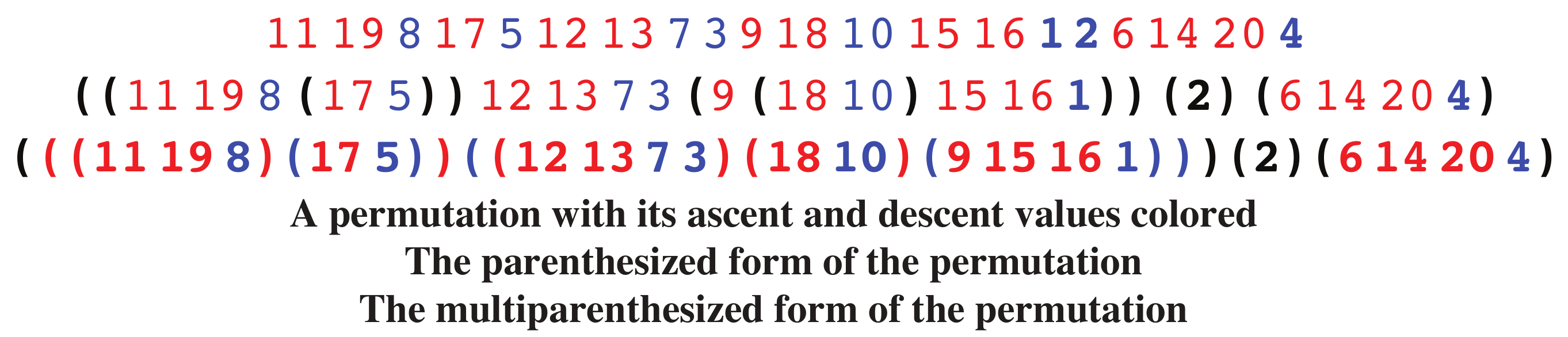}
\end{center}

We have implemented in Mathematica both directions of the algorithm, as well as supplementary aspects of the structure, fully interactively. One can input permutations or registries by clicking and dragging any point in the display.

{\bf The tree form of a permutation.}
The parentheses give a permutation a quite complex tree structure, visible in its graph. We now construct from each parenthesis a node of a tree. 
We shall retain at a node enough information for reconstituting the tree, and thus the whole permutation, and call its pattern a {\bf primitive permutation}.

A primitive permutation at a node has a {\bf branch root} in a previous node, it contains a family, as defined before, called the {\bf active family} of the node, as well as pairs of elements forming small families called ${\bf buds}$, which are the branch roots of its offshoot branches. The idea is that while a permutation decomposes into families as seen in the algorithm, the branch root and buds indicate precisely where different families are inserted into previous families.

In the example in the picture above a node is (7 8 (13 9) 6 4 (10 3) (5 2) 1), where only the smallest two of each inner parenthesis were retained as a bud which started that inner parenthesis. Its pattern is itself, so this is a primitive permutation. Its active family is 7 8 6 4 1 and its root is 2 1. Note that in the tree underneath the permutation, in the picture above, the part of the permutation at each node has been reduced to its pattern, which is a primitive permutation. For instance the node (10 (12 11) 3) has pattern (2 (4 3) 1).

The tree can be reconstituted from these primitive permutations by specifying for each branch root the bud on the previous root with which it will be identified (unless it is a root of a whole tree.) In addition one has to specify the shuffling of various branches, the way in which the numbers on different branches are made different while preserving the order on each branch.

Note that the parenthesized form of a permutation is natural and canonical. There are various algorithms for ordering the families in the registry, which are based on the history of the families, as in the previous example. The "visual" orders, based on the position of the smallest or largest ascent or descent of each family in $\pi$ do not lead to bijective algorithms.

The structure described, as well as the definition of primitive permutations below, are motivated by the following property. {\bf The pattern structures of different nodes} (an active family, buds and a root for each) {\bf are independent of each other.} In other words, once a new family is started, it has the potential to develop like any self standing permutation. The only relative interaction with the rest is that, $\pi$ being a permutation, each entry can be used only once, i.e. different branched need to be shuffled. This shuffling, which is a pattern on the union of the branches which reduces to the pattern of each node, is arbitrary.

\section{Primitive permutations}

The idea of the primitive permutation is the following.  In the evolution of a permutation which we described before, when a new element does not fit into a family, it picks its friend immediately to its right and they start a new family.  Keeping track of all these new families is complicated, and as it turns out, once their new family is started they do not interfere anymore with their old family or with their siblings.  Therefore, just like in a large modern family, we shall keep on the wall a wedding picture of each member who left, but we shall no longer follow in detail what happened to them.  Such a permutation, in which a departed pair of immediate neighbors, called a bud (since they started a new branch of the family tree) is no longer developed, will be called a primitive permutation.  It consists of the active family members still in the household, together with the positions of the buds.  It forms a node of the family tree, with the buds indicating the precise insertion points of the branches above.

The grand assembly of all these nodes into the family tree -- in fact a family forest, as blocks of the permutation develop into separate trees -- is axiomatized and described by a decimal code, similar to the one which organizes a whole library.

The delicate algorithm which describes the construction of a primitive permutation, given only the pairing between the values in each bud and the number of colors in the active family, is described in what follows.  The assembly algorithm for the decimal code of a permutation is described in the chapter which follows.

The data from which we construct a primitive permutation is of the following type.  Construct a primitive permutation in which the fifth highest element was taken away by the eighth highest, and from the remaining six members the highest four are ascent images and the lowest are two descent images.  We shall show that there is precisely one primitive family satisfying these conditions, and we shall construct algorithmically its whole history.

In the decimal notation which encodes the whole permutation, this family will have a decimal head like $3.1.7$, and then each member will be assigned two digits.  In our case, those will be $1.\bfb,\ 2.\bfb.,\ 3.\bfr,\ 4.\bfr,\ 5.1$, as the fifth highest was taken away, $6.\bfr,\ 7.\bfr,\ 5.2$, which is on the position of the highest element which took 5 away.  When $5.1$ and $5.2$ start their new branch of the family tree, the family will append to the label $3.1.7$ of the family which they left the suffix 5, which will give the label $3.1.7.5$ for their new branch.  In it, they will be the elements $3.1.7.5.1.x$, and $3.1.7.5.2.y$.  For instance, $3.1.7.5.1.\bfb$ will indicate that the founder of the new family stayed in, as a blue descent value, while $3.1.7.5.2.1$ will show that the cofounder of the new family moved on, to start a new family of its own.

\begin{center}
\includegraphics[width=1\linewidth]{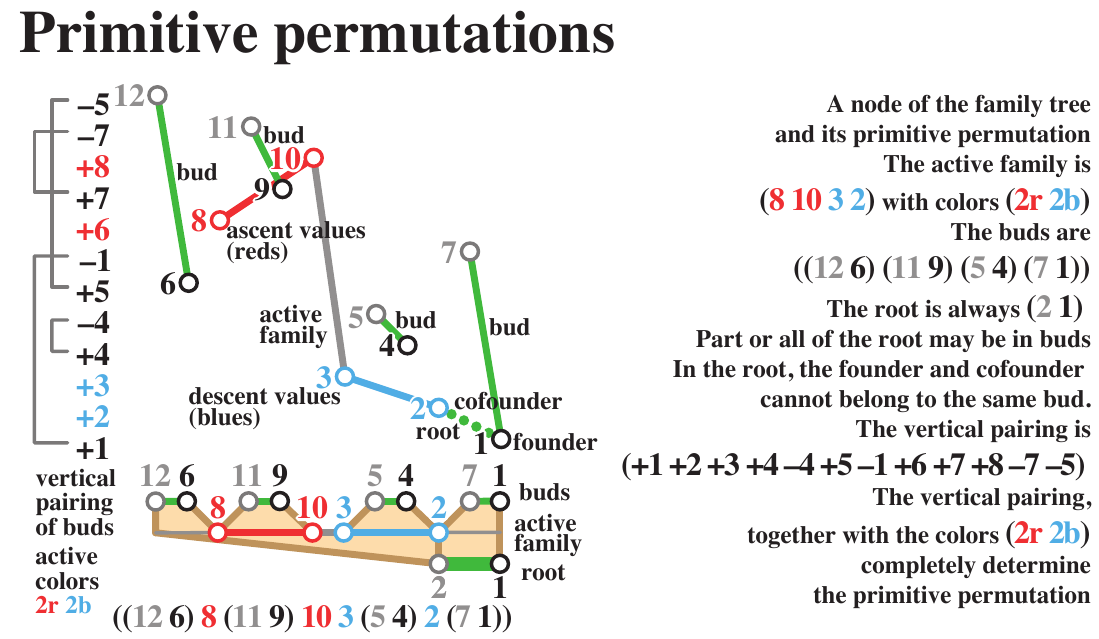}
\end{center}

{\bf Primitive permutations.} Recall that we named a permutation $\pi$ a family if it is a singlet or has the pattern $d+1,d+2,\dots,d+a,d,d-1,\dots,1$ with $a,d \ge 1$ (e.g. 456321 for a=d=3.) Any nonempty subset of a family is a family. The formal definition of a primitive permutation is the following.

We call a permutation $\pi$ of $\overline n$ {\bf primitive} if it has $\ge 2$ elements, there are mutually disjoint adjacent pairs $(i,i+1)_{i\in I}$ in $\overline n$, called {\bf buds} with $\pi(i)>\pi(i+1)$ such that (a) on the complement of the buds $\pi$ is a (nonempty) family (b) with one or more of the buds left in it, $\pi$ is not a family and (c) the properties (a) and (b) hold for any cutoff of $\pi$ by $c=1,\dots,n$, defined as the pattern of the restriction of $\pi$ to points $i$ with $\pi(i)\le c$. The complement of the buds will be called the {\bf active family} of the node. The smallest and the second smallest elements of the primitive permutation (which includes the buds) are called its {\bf root.} These are the two elements which started the node, and will also be called the {\bf founder} and {\bf cofounder}.

Primitive permutations are easy to construct inductively. Given a primitive permutation of $\overline n$, insert $n+1$ immediately before any element $m$ of the active family. The result is always a primitive permutation. Either the active family together with $n+1$ forms a family or else, if they don't, $n+1$ and $m$ form an extra bud. All primitive permutations are constructed this way from the singlet (1).

The {\bf graph generating the primitive permutations} is shown below.
Any primitive permutation is uniquely built by a walk on this graph.

Each downward edge produces family growth, by an ascent (red) or descent (blue).
Each upward edge produces a new (parenthesized) bud and decreases the family 
by an ascent (red) or descent (blue), in a number of ways marked by the arrow number.
The pattern of the active family at a node, with the number of ascents in red and descents in blue, 
is shown in examples.  The primitive permutation may contain previous buds. In the examples,
as the new element 6 enters before one of the 5 positions in the active family, the family may grow or bud.

\section{The algorithmic construction of primitive permutations}

\textbf{Definition.} A family pairing data is a pair (inds, n$_\text{Reds}$).  inds is a sequence with elements the numbers $1,\ldots,n$ and a subset $\{-1,\ldots,-n\}$.  The positive numbers appear in the sequence in their natural order, and each negative number $-k$ appears after the positive $k$.  Let $(s_1,s_2,\ldots,)$ denote the signs of elements in the sequence, with $s_i\in \{\pm 1\}$.  For any $k\ge 2$, we require that the partial sum $s_1+\cdots+s_k\ge 2$.  if $s=\sum s_i$, then $1\le n_\text{Reds}\le s-1$.

Remark that the sign condition is equivalent to the fact that $s_1=s_2=+1$ and $(s_3,s_4,\ldots)$ form the increments of a Dyck path.  Equivalently, the entire sequence of signs $(s_1,s_2,\ldots)$ forms the increments of a Dyck path, which reaches height 2 in 2 steps, and remains at or above height 2 afterwards.  The number of choices in the position set corresponding to a descent, $s_i=-1$, is precisely the height $s_1+s_2+\cdots +s_{i-1}$ of the Dyck path, before $s_i$.

Thus the family pairing data is equivalent to a Dyck path which reaches height 2 in 2 steps, and remains at or above height 2 afterwards, in which each descent is marked with a number $1,2,\ldots,h$, where $h$ is the height of the path before the descent, together with a number $n_\text{Reds}$ between 1 and the final height minus 1.  An example is the Dyck path $(s_1,\ldots,s_7)=(+1,+1,+1,-1,+1,+1,-1)$, decorated as $(+1,+1,+1,-1(2),+1,+1,-1(4))(n_\text{Reds}=2)$.  We call the preceding data a decorated Dyck path.  The paired values corresponding to this data is the sequence inds$=(1,2,3,-2,4,5,-5)$, where -2 is the second choice among the preceding $(1,2,3)$, and -5 is the fourth choice among the remaining $(1,3,4,5)$, as 2 was paired with -2.  The height of the Dyck path before the two negatives was 3, respectively 4.  The final height of the Dyck path is h=3; we have $n_\text{Reds}=2<h$, and we let $n_\text{Blues}=h-n_\text{Reds}$.  

The algorithm which follows will establish a bijection between paired values -- or equivalently decorated Dyck paths -- as above, having m descents, $n_\text{Reds}, n_\text{Blues}$ (and thus a total length  $n=n_\text{Reds}+n_\text{Blues}+2m$), and primitive permutations with active family $n_\text{Reds}$ reds $n_\text{Blues}$ blues, and $m$ buds.

With the algorithm, the data in the previous example will correspond to the primitive permutation $((4\ 2)\ 3\ 5\ (7\ 6)\ 1)$, with buds $(4\ 2)$ and $(7\ 6)$, and active family $(3\ 5\ 1)$, having ascent values (reds) $3,5$ and descent value (blue) $1$.

\subsection{The pairing sequence -- primitive permutation algorithm}
We start with paired values data as described above.  The only such data of length 2 is $(1,2)$ with $n_\text{Reds}=n_\text{Blues}=1$.  The corresponding primitive permutation is $\pi= (2\ 1)$ with no buds.  Assume now that paired values inds of length $n$ and $n_\text{Reds}$ are given.  We let inds$'$ denote the sequence obtained by removing the last entry of inds.  We compute below a modified $n_\text{Reds}'$ to be paired with inds$'$.  We let $\pi'$ denote the permutation constructed inductively by the algorithm from the data (inds$'$, $n_\text{Reds}'$), and we describe the construction of $\pi$ from $\pi'$.

Before we begin the algorithm, let us make the following observation.  If the last sign of inds is -1, then we compute the number $n_\text{choice}$ of its chosen sequence as follows.  In the unpaired positive values preceding -k, its pair +k appears on position $n_\text{choice}$.  Equivalently, $n_\text{choice}$ was the decoration of the last entry -1 of the Dyck path ($\cdots$, -1($n_\text{choice}$)).  In the example preceding the proof, $(+1,+1,+1,-1(2),+1,+1,-1(4))$, the last $n_\text{choice}=4$.

We shall construct $n_\text{Reds}'$ for the previous $\pi'$ by adding a correction to $n_\text{Reds}$.  In the second part of the algorithm, we shall show that the permutation $\pi$ which we construct has precisely $n_\text{Reds}$ active reds, and thus the correction was correct.

The previous $n_\text{Reds}'$ is computed by adding to $n_\text{Reds}$ the following correction.  $c=n_\text{Reds}'-n_\text{Reds}$.  

\textbf{Case (A)} If the last sign in inds is $+1$ and $n_\text{Reds}=1$, then $c=0$.  

\textbf{Case (B)} If the last sign in inds is $+1$ and $n_\text{Reds}>1$, then $c=-1$.

\textbf{Case (C)} If the last sign in inds is $-1$, and if the last choice $n_\text{choice}$, as described above, is $\le n_\text{Blues}$, the correction $c=0$.

\textbf{Case (D)} If the last sign in inds is $-1$, and $n_\text{choice}>n_\text{Blues}$, then $c=+1$.

This completes the first part of the algorithm.  We now have the data inds$'$, obtained by removing the last entry of inds, and $n_\text{Reds}'$, obtained by the correction in the cases above.

With these we construct inductively a primitive permutation $\pi'$.  We shall describe below the way in which we obtain the desired permutation $\pi$ from $\pi'$, and check that in each case the correction between $n_\text{Reds}$ of $\pi$ an $n_\text{Reds}$ of $\pi'$ is the one described above.  

Recall that $n$ is the length of the sequence inds, and also the length of and the highest number in the permutation $\pi$ which we are constructing.

\textbf{Case (A)} If the last sign in inds is $+1$ and $n_\text{Reds}=1$, then we insert $n$ immediately before the first element in the active family of $\pi'$ (obtained from $\pi'$ by removing its parenthesized buds).  In $\pi$, $n$ will be red and the previous unique red in the active family of $\pi'$ will turn blue.  Thus, the active family of $\pi$ will have a unique red, as we assumed in the above condition $n_\text{Reds}=1$.  Thus, $n_\text{Reds}=n_\text{Reds}'=1$, and  $c=n_\text{Reds}'-n_\text{Reds}=0$, as assumed.

\textbf{Case (B)} If the last sign in inds is $+1$ and $n_\text{Reds}>1$, then we insert $n$ immediately before the highest blue in the active family of $\pi'$.  Then the active part of $\pi$ (obtained by removing its parenthesized buds) is a family, with $n$ as red.  Thus, $n_\text{Reds}=n_\text{Reds}'+1>1$, and  $c=n_\text{Reds}'-n_\text{Reds}=-1$, as assumed.

\textbf{Case (C)} If the last sign in inds is $-1$, with the choice number $n_\text{choice}$ as before, and if $n_\text{choice}\le n_\text{Blues}$, let $b$ denote the entry number $n_\text{choice}$ on the increasing list of blues in the active family of $\pi'$.  We replace $b$ by the bud $(n,b)$ to obtain the permutation $\pi$.  The number of active blues decreased by one, the number of active reds stayed the same, so $n_\text{Reds}=n_\text{Reds}'$, and $c=n_\text{Reds}'-n_\text{Reds}=0$, as assumed.

\textbf{Case (D)} If the last sign in inds is $-1$, and $n_\text{choice}>n_\text{Blues}$, then let $n_\text{choice}'=n_\text{choice}-n_\text{Blues}>0$, and let $r$ denote the entry number $n_\text{choice}'$ on the increasing list of reds in the active family of $\pi'$.  We replace $r$ by the bud $(n,r)$ to obtain the permutation $\pi$.  The number of active reds decreased by one, the number of active blues stayed the same, so $n_\text{Reds}=n_\text{Reds}'-1$, and $c=n_\text{Reds}'-n_\text{Reds}=+1$, as assumed.

This ends the algorithm.

Remark that when the highest element $n$ is inserted into $\pi'$, in precisely the position in which we inserted it (and we have covered all possible positions in front of elements in the active family of $\pi'$), then the result is the primitive permutation $\pi$, as we described it in the algorithm.  In the first two cases, the active family $n$ enters the active family (and the paired value is positive and the Dyck path is rising).  In the last two cases, $n$ does not fit into the active family, so it removes its friend $b$ respectively $r$, to start a new family as a bud.  There are $n_\text{choice}$ choices of friends, and $n_\text{choice}$ is recorded as a decoration on the descending Dyck path, while in inds, the last entry $-k$ is paired with the $n_\text{choice}$ choices among the remaining unpaired positive entries in inds preceding it.  These last observations show that the last entry of inds$'$, can be recovered from the pair $\pi'$ and $\pi$.  As $n_\text{Reds}$ is the number of active ascent values, or reds, in $\pi$, this shows that the algorithm is reversible, i.e., the paired values, or decorated Dyck path, together with $n_\text{Reds}$, can be recovered out of $\pi$.  Thus the algorithm is invertible.  

This ends the proof that primitive permutations are in bijective correspondence with paired values, or decorated Dyck paths, together with a specified number of reds and blues in the active family.  The algorithm also gives an fast and explicit construction for this bijection.  The smallest cases of data and primitive permutations are the following.

\vskip 0.1 in

\begin{tabular}{|c|c|c|c|c|c|}
 $n_\text{Reds}$ & $n_\text{Blues}$ &$n_\text{Buds}$  & Paired values & Dyck paths & Primitive permutations \\ 
\hline 		\rr{1} & \bb{1} & 0 & $(1,2) $ & $(+,+)$ & $(\rr{2}\ \bb{1})$ \\ 
\hline	  	\rr{2} & \bb{1} & 0 & (1,2,3) & (+,+,+) & $(\rr{2}\ \rr{3}\ \bb{1})$\\
\hline	 	\rr{1}  & \bb{2} & 0 & (1,2,3) & (+,+,+) & $(\rr{3}\ \bb{2}\ \bb{1})$\\
\hline	  \rr{3}  & \bb{1} & 0 & (1,2,3,4) & (+,+,+,+) & $(\rr{2}\ \rr{3}\ \rr{4}\ \bb{1})$\\
\hline	  \rr{2}  & \bb{2} & 0 & (1,2,3,4) & (+,+,+,+) & $(\rr{3}\ \rr{4}\ \bb{2}\ \bb{1})$\\
\hline	  \rr{1}  & \bb{3} & 0 & (1,2,3,4) & (+,+,+,+) & $(\rr{4}\ \bb{3}\ \bb{2}\ \bb{1})$\\
\hline \rr{1} & \bb{1} & 1 & (1,2,3,--1) & (+,+,+,--(1)) & $(\rr{3}\ \bb{2}\ (4\ 1))$ \\ 
 	   	 &  &  & (1,2,3,--2) &  (+,+,+,--(2)) & $((4\ 2)\ \rr{3}\ \bb{1})$ \\ 
		 &  &  & (1,2,3,--3) &  (+,+,+,--(3)) & $(\rr{2}\ (4\ 3)\ \bb{1})$\\ 
\hline
\end{tabular} 
\vskip 0.1 in
Remark that besides the paired values or Dyck paths, one needs to specify also the number of reds (ascent values) and blues (descent values) desired, in order to obtain the primitive permutation.  The structure of the primitive permutation changes considerably with the specification of the colors.

The number of primitive permutations of each type depends only on the sum $n_\text{Fam}=n_\text{Reds}+n_\text{Blues}$, rather than on $n_\text{Reds}$ and $n_\text{Reds}$ individually, as can be easily checked in the algorithm above.  These numbers, in small cases, are given in the table below.

\vskip 0.1 in
\begin{tabular}{c|c|c|c|c|c|c|c|}
\multicolumn{1}{c}{} & \multicolumn{7}{c}{$n_{Buds}$}\tabularnewline
\cline{2-8} 
\multirow{6}{*}{$n_{Fam}$} &  & 0 & 1 & 2 & 3 & 4 & 5\tabularnewline
\cline{2-8} 
 & 2 & 1 & 3 & 21 & 207 & 2529 & 36243\tabularnewline
\cline{2-8} 
 & 3 & 1 & 7 & 69 & 843 & 12081 & 197127\tabularnewline
\cline{2-8} 
 & 4 & 1 & 12 & 159 & 2388 & 40221 & 751032\tabularnewline
\cline{2-8} 
 & 5 & 1 & 18 & 309 & 5628 & 110781 & 2361222\tabularnewline
\cline{2-8} 
 & 6 & 1 & 25 & 540 & 11760 & 268365 & 6495345\tabularnewline
\cline{2-8} 
\end{tabular}
\vskip 0.1 in

The first line of the table is the sequence oeis A167872: $1,3,21,207,\ldots$, which is a sequence of moments connected with Feynman numbers.  Its double is the sequence A115974: $2,6,42,414,\ldots$, which counts the number of Feynman diagrams at perturbative order n which are proper, i.e., cannot be obtained from simpler diagrams by concatenation.  Note that our algorithm provides a direct construction.  For Feynman diagrams, the known constructions give all diagrams, counted by the sequence A000698: $2,10,74,706,\ldots$.  From these, the improper diagrams have to be removed case by case.

The other lines of the diagram, as well as the table itself, do not appear to have been studied before.

\subsection{The generating function for primary permutations}
\

Let 
\begin{eqnarray*}
\varphi(t) & = & \frac 1 2\left(\frac 1 t - \frac{1}{1-\frac{1}{\sum_{k=0}^{\infty}(2k-1)!!t^k}}\right)= \\
& = & 1+3t+21t^2+207t^3+\cdots,
\end{eqnarray*}
which is the generating function for the number of primitive permutations with small families ($n_\text{fam}=2$) and $0,1,2,\ldots$ buds, related to the count of fermionic Feynman diagrams.
This function counts our pairing data for constructing primitive permutations, as $(2k-1)!!$ counts the number of ways to make pairs out of $2k$ elemnts.

Let 
\begin{eqnarray*}
\psi(x) & = & \int_0^x e^{y^2}dy,
\end{eqnarray*}
which is up to a normalization the imaginary error function.  The generating series
\begin{eqnarray*}
\Phi(x,t) & = & \sum n_{\text{prim}}x^{n_\text{fam}}t^{n_\text{buds}}\\
& = & -x+\left(e^{\frac{1-(1-x)^2}{2t}}-1\right)(1-2 \varphi(t))+\\
& & + 2 e^{-\frac{(1-x)^2}{2t}}\left(\psi\left(\frac{1-x}{\sqrt{2t}}\right)-\psi\left(\frac{1}{\sqrt{2t}}\right)\right)\frac{1-t-2 t \varphi(t)}{\sqrt{2t}}.
\end{eqnarray*}
with the boundary condition 
\begin{eqnarray*}
\lim\limits_{x\rightarrow 0}x^{-2}\Phi(x,t) & = &\varphi(t) 
\end{eqnarray*}
which satisfies the equation

\begin{eqnarray*}
\frac{\partial \Phi(x,t)}{\partial x} = \frac{x-x^2-4 t}{t x} \Phi(x,t)+\frac{2 }{x^3}\varphi(t)-\frac{1}{t x^2}
\end{eqnarray*}

This function is obtained as a solution to the differential equation which encodes the recurrence relation for families which we described before. 

\subsection{The decimal code of a permutation}

Let us start with a permutation $\pi$ written in line notation. We associate to it a sequence resembling the Dewey code for the books in a library.

The data for a primitive permutation $\pi$ consists of a pairing sequence, e.g $$(+1, +2, +3, -2, +4, -1, +5, +6, -4)$$ together with colors like (2, 1), i.e. 2 reds and 1 blue, for the unpaired numbers.

Let us rewrite the data as follows. The paired elements $(+4, -4)$ become $4.1$ and $4.2$.  The ending $.2$ stands for the negative sign.  The unpaired elements are followed by their color \bfsr or \bfb, with all blues before all reds.

The previous sequence becomes $$(1.1,\ 2.1,\ 3.\bfb,\ 2.2,\ 4.1,\ 5.\bfr,\ 6.\bfr,\ 4.2).$$  We shall call this a {\bf colored pairing sequence}.  The primitive permutation algorithm produces a primitive permutation, in which the values, from lowest to highest, correspond exactly to our data sequence $(1.1,\ 2.1,\ 3.\bfb,\ 2.2,\ 4.1,\ 5.\bfr,\ 6.\bfr,\ 4.2),$ in increasing order of the values.  The colors $\bfr$ and $\bfb$ become the colors indicating ascent values and descent values.

A bud pair, for instance $(+4, -4)$, now $(4.1,\ 4.2)$, is in the tree structure of the permutation, the root, the smallest elements of a new branch. Thus $(4.1,4.2)$ become in the new branch the nodes $(1,\ 2)$, part of a new branch node which is described as before by a primitive permutation, e.g  $$(1.1,\ 2.\bfb,\ 1.2,\ 3.\bfb,\ 4.\bfr).$$  The numbers $1.\cdots 2.\cdots$ which form its root, are $(1.1,\ 2.\bfb)$, correspond to the bud $(4.1,\ 4.2)$ in the previous colored paired sequence.  To unify the notation $(4.1,4.2)$ with $(1.1,2.\bfb)$, we call both of them $(4.1.1,4.2.\bfb)$.

We give the prefix $4$ to the whole new branch and write it as 
$$(4.1.1,\ 4.2.\bfb,\ 4.1.2,\ 4.3.\bfb,\ 4.4.\bfr).$$ 
The previous sequence is now modified to 
$$(1.1,\ 2.1,\ 3.\bfb,\ 2.2,\ 4.1.1,\ 5.\bfr,\ 6.\bfr,\ 4.2.\bfb).$$ 

Finally, we prepend to each element the number of its block. 
Recall that the blocks of the permutation are, in the line notation, the segments between successive minima.

Mark a singlet, which is the block number $i$, by $i.\bfk$. 

{\bf Remark} Singlets are singlet blocks. At this point, note that each decimal number which we have obtained corresponds throughout the process to a value of the permutation.  They are either first or preceded by the minimum of the previous block, which is smaller. So they are always ascent values, to be colored red. By the inverse cycle transform, they correspond to on diagonal elements, which are colored black. So depending on the application, singlets, or singlet blocks, may be all colored red or black $i.\bfk$ (K is the label of black for computers).

This way, every value of the permutation gets a unique {\bf decimal number} separated by periods, with all elements numbers except for the last element which is a color.

In the permutation, a decimal number will correspond to a value of the permutation, equal to the position of that decimal number in the list obtained from the permutation.  The decimal numbers of a primitive permutation will appear in decreasing order. We therefore order all decimal numbers in the order of the corresponding values of the permutation.

The resulting sequence will be called the \textbf{decimal code} of the permutation.

We shall show that it encodes the permutation we started from, and we shall axiomatize it in what follows.  Remark at this point that the decimal code has a structure which is very far from the one of a permutation.  As a permutation, it is closest to $\pi^{-1}\circ\pi=\id$, since we used the values of the permutation in ascending order $1,2,3,\ldots$.  Most of the features of the decimal code resemble the identity permutation.  The subsequence corresponding to a primitive node, for instance, has most numbers in natural ascending natural order, except for a few pairings to previous numbers.  The colors in such a branch are also in ascending order from blue to red.  The blocks of the permutation which give the first digit of the code are in ascending order as well.  

All of the information which allows us to recover the permutation comes from the pairings between elements $\cdots .1.\cdots.2.\cdots$, as well as from the shuffling between branches which have different heads.  Thus, the decimal code is a translation between the order structure of a permutation and a pairing and shuffle structure.  The latter is typical in many different areas, starting from particle physics, with creation paired with annihilation, and time shuffles of particles, to biological growth and to the very growth of libraries by addition and loss of books, which inspired the Dewey code.

We shall axiomatize the decimal code structure as follows.  

{\bf Definition}

A {\bf decimal number} $\nu$ is a sequence $i_1.i_2.\cdots.i_k.c$ separated by dots, in which $i_1,\dots,i_k\in\{1,2,\dots\}$ and the last entry $c$ is a color, red $\bfr$ or blue $\bfb$. Any cutoff $i_1.i_2.\cdots.i_l$ to length $l$ is called a {\bf head of the decimal number $\nu$}. A numerical sequence $i_1.i_2.\cdots.i_l$ where $i_1,\dots,i_l\in\{1,2,\dots\}$ is called a {\bf decimal head}.

A {\bf decimal code} $\delta$ is a finite sequence of decimal numbers with the following properties.

Any decimal head $h$ of an entry of $\delta$ is immediately followed, among the entries of $\delta$, 

(i) either by a color $c$. In that case, $h$ appears only once as head, $h.c$ is called a {\bf leaf}, and $h$ is called a {\bf leaf head}.

(ii) or by a number. In that case $h$ is called a {\bf branch head}, and is always immediately followed only by numbers in the entries of $\delta$.

Define the {\bf node cut} of the branch head $h$ as the entries obtained by 

(i) selecting the entries of $\delta$ with head $h$, in order,

(ii) cutting off each such entry to length 2 + the length of $h$, and 

(iii) keeping only the ones ending in 1, 2, or a color, in which each $h.n.n'$ is continued as $h.n.n'.1.\cdots.1.c$ with a number $\ge 0$ of 1's after $n'$.

Cut now off the head $h$ and keep the following two digits from each number, in the order of the decimal code $\delta$.  We ask these numbers to form a colored pairing sequence, i.e., such that 
(i) the numbers of the form $n.1$, $n.\bfb$, $n.\bfr$, should appear in the natural order of $n$, i.e., one of each with $n=1,n=2,n=3,\ldots$ in ascending order,

(ii) each element of the form $n.2$ should be paired with an element $n.1$ which appeared before it, with at most one such pair for each $n$,

(iii) the colors $c$ in the elements $n.c$, which we call leaves, should appear in ascending order, first blues then reds, with one of each color, and

(iv) the sequence of signs obtained by replacing the elements of type $n.2$ by $-1$ and $n.1$, $n.\bfb$, $n.\bfr$ by $+1$ should form a sequence with the property that the sum of the first $k$ terms is $\ge 2$ for any $k\ge 2$ (a form of a Dyck sequence).  In addition, we require the following property.

The first digits in each decimal numbers in the decimal code should form a set $\{1,2,\ldots,m\}$.  We call these numbers the \textbf{block labels.}

We require that for each block label $i$, there should exist exactly one element $\alpha(i)$, called the block anchor, of the form $\alpha(i)=i.1.\cdots.1.c$, with $c$ a color and with a number $\ge 0$ of 1's after $i$.  The color $c$ is black if there are no 1's after $i$, and we call this kind of $\alpha(i)$ a singlet.  Otherwise, the color $c$ is red.  We require the block anchors to appear in ascending order in the code $\delta$, so
$$\delta=(\cdots,\alpha(1),\cdots,\alpha(2),\cdots,\alpha(3),\cdots).$$
From the previous conditions $\alpha(1)$ will be on the first place of $\delta$.

This ends the axiomatization of the code.

\begin{thm}

Decimal codes consisting of a sequence of $n$ decimal numbers are in natural bijection with permutations of length $n$.

The entries of the decimal code are in bijection with the permutation $\pi$ correspond to the values of $\pi$ in ascending order, i.e. to the line form of the inverse permutation $\pi^{-1}$. 

The decimal number corresponding to a permutation value indicates its position in the family tree, followed by its color (red or blue for ascent or descent.)
The elements with a same branch head $h$ correspond, in increasing order, to the values of the elements enclosed in a parenthesis in the parenthesized form of $\pi$, which form a branch of the tree form of $\pi$. The elements in the node cut of a branch head $h$ form the pairing sequence of the primitive permutation corresponding to the root node of the branch of $h$. The elements with a head $i$ of length 1 are the values in the $i$-th block of $\pi$, the $i$-th tree in the forest of the tree form of $\pi$.
\end{thm}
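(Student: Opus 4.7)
The plan is to build the decimal code bijection on top of structures already proved: the main registry/tree bijection (which canonically decomposes $\pi$ into a forest whose nodes are primitive permutations), and the pairing sequence bijection for primitive permutations (which encodes each node by a colored pairing sequence together with the pair $(k,l)$ giving the number of ascent/descent values in its active family). The idea is that a decimal code records, for each value of $\pi$ in ascending order, its address in this forest, and nothing more.

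First I would define the forward map $\pi \mapsto \delta(\pi)$. Using the parenthesized form of $\pi$, label the blocks $1, 2, \ldots, m$ in order; recursively, having labeled a node by a head $h$, label each child subtree (branch) by appending to $h$ the rank of the bud of that branch among all buds of the node when ordered by the value of the bud's founder. Inside a node with head $h$, read off its primitive permutation's colored pairing sequence in the sense of Section on primitive permutations: for each of its two founder/cofounder elements append $.1$ and $.2$ (using the convention that the root of a whole block is of the form $i.1.\cdots.1.\bfr$, or $i.\bfk$ if the block is a singlet); for each internal bud of the node append $.n.1$ and $.n.2$ with $n$ matching the pairing; for each active-family element of the node append $.n.\bfr$ or $.n.\bfb$ according to its color. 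Finally, sort the resulting decimal numbers into the order of their corresponding permutation values; this is $\delta(\pi)$.

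Next I would verify that $\delta(\pi)$ satisfies every clause of the axioms. The pairing-sequence/Dyck condition at each branch head $h$ holds because the node cut at $h$ is, by construction, precisely the colored pairing sequence of the primitive permutation at that node, and those were axiomatized in the primitive-permutation section to satisfy exactly this Dyck/pairing condition. The block labeling conditions hold because the outermost parentheses of the parenthesized form are the blocks between successive minima, they are indexed in natural order, and each block has a unique anchor (its minimum), which descends through a chain of founders $i.1.1.\cdots.1.c$ down to the leftmost leaf of the block's tree, colored red (or black if the block is a singlet). That each value of $\pi$ receives exactly one decimal number, and that the global ascending order of values matches the ordering of $\delta$, is immediate from the definition.

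For the inverse map, starting from an axiomatic decimal code $\delta$ I would invert node-by-node. For each maximal branch head $h$, the node cut at $h$ is a colored pairing sequence; applying the pairing-sequence algorithm from the primitive-permutation section reconstructs a unique primitive permutation at that node, with its active family colored by the $\bfr,\bfb$ data and its buds marked by the $n.1,n.2$ pairs. The parent-child structure of branch heads gives the tree, and at each bud the child branch is glued in at the bud's position. The ordering of decimal numbers in $\delta$ records the shuffle: it tells us, for any two values from different branches, which is larger, and therefore reconstructs a unique permutation of $\{1,\ldots,n\}$. I would then check that $\delta(\pi(\delta))=\delta$ and $\pi(\delta(\pi))=\pi$ by induction on the number of nodes in the forest; the base case of a single singlet block is immediate, and the inductive step reduces to the already-proven bijectivity of the primitive-permutation/pairing-sequence correspondence at the newly removed leaf node.

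The main obstacle I expect is bookkeeping rather than any deep new idea: one must check carefully that the global total order on decimal numbers (which encodes the inter-branch shuffle of values) is compatible with the local data at each node, and in particular that the position of a bud's founder/cofounder pair in the parent's pairing sequence is consistent with the position of the corresponding branch root in the child's pairing sequence. Writing the correspondence as a translation chart between the three data types (parenthesized permutation, forest of primitive permutations, and decimal code) and proving compatibility by induction on $n$ should make this routine but slightly tedious. Once compatibility is verified, the three remaining bullet points of the theorem (entries correspond to values of $\pi$ in ascending order; elements with branch head $h$ are the values in that parenthesis; elements with length-$1$ head $i$ are the $i$-th block) are immediate restatements of how $\delta$ was built.
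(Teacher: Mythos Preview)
Your proposal is correct and follows essentially the same route as the paper: both factor the bijection through the parenthesized/tree form of $\pi$, invoke the already-established pairing-sequence $\leftrightarrow$ primitive-permutation bijection at each branch head, and recover the global permutation from the total order on the entries of $\delta$. One small correction of convention: the integer $n$ you append to a head $h$ to label a child branch is not the rank of that bud among \emph{all buds} of the node, but rather the positive label of the bud's founder in the node's pairing sequence (equivalently, its rank by value among all elements of the primitive permutation that were ever in the active family); this does not affect the structure of your argument.
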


\begin{proof}

Given a decimal number, which always ends in a color, we call a \textbf{head} of it the decimal number obtained by cutting off a number of elements at its end.  Cutting off the last entry, the color, leaves heads which will be called \textbf{leaf heads.}  A leaf head is continued in the decimal code by only one color.  Cutting off two or more elements at the end leaves nonempty heads which will be called \textbf{branch heads.}  Branch heads correspond to primitive permutations, and to the parentheses in the parenthesized form of the permutation.  The algorithm for constructing a permutation $\pi$ from a decimal code proceeds as follows.

\begin{center}
\includegraphics[height=1.0\textheight]{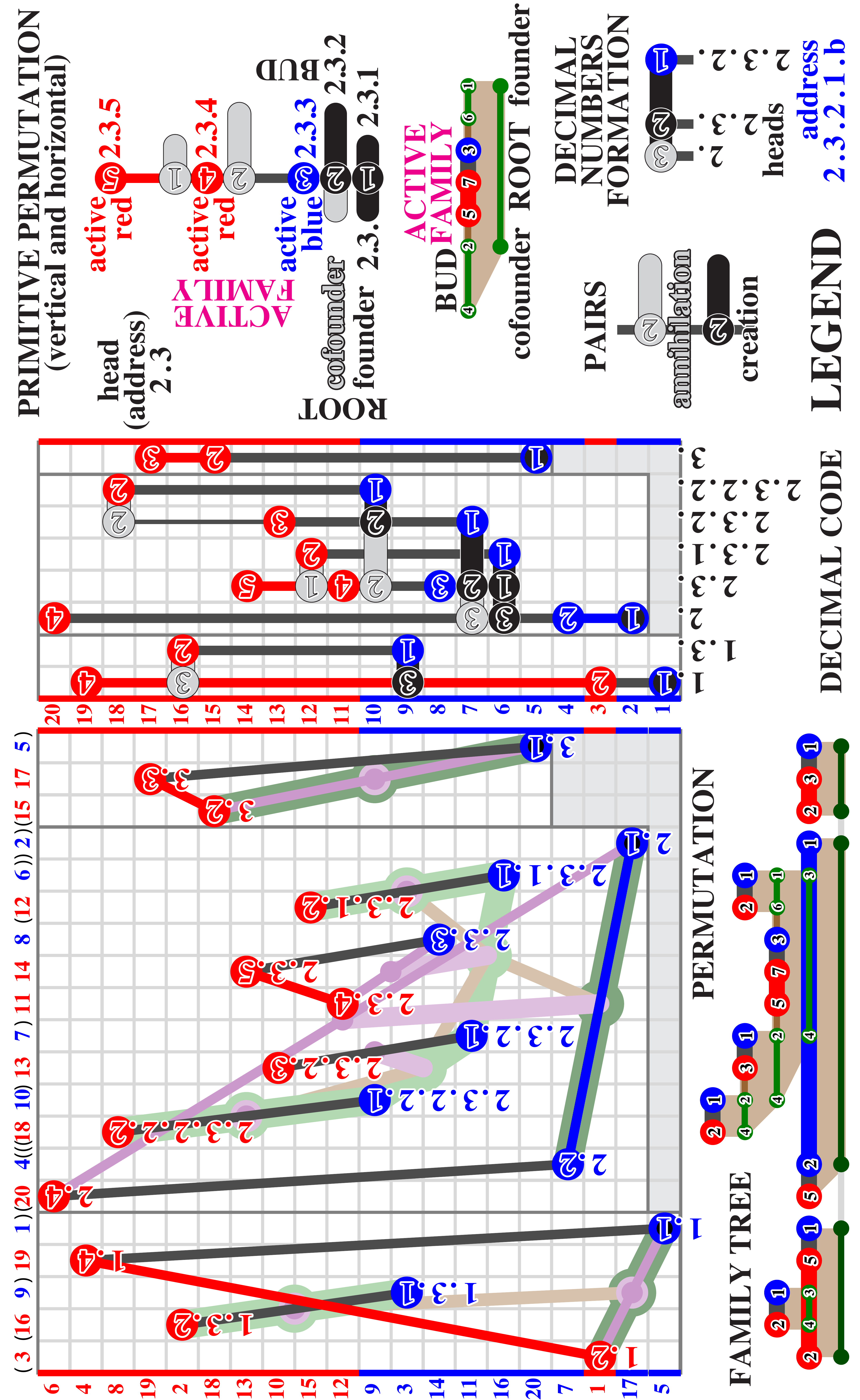}
\end{center}

For a branch head $h$, find all its continuations in the given decimal code which have length $\ge \vert h \vert+2,$ where $\vert h \vert$ denotes the length of $h$.  These continuations will be of the form $h.n.c,$ with $c$ a color, which we keep, and $h.n,n'$, where $n,n'$ are numbers.  We keep only those for which $n'=1$ or $2$ and in which the corresponding full decimal number is $h.n.n'.1.\cdots.1.c,$ where $c$ is a color and the number of $1$'s after $h.n.n'$ is $\ge 0$.  We now cut off the head $h$ and leave the pairs $n.c$ or $n.n'$ with $n'=1$ or $2$, in the order in which the corresponding full decimal numbers appear in the decimal code $\delta$.  Our main requirement for a decimal code was that such a sequence of pairs is a colored pairing data which encodes a primitive permutation $\sigma$, which we construct with the primitive permutation algorithm.  The outer parentheses for the permutation $\sigma$ are labeled by the head $h$.  The elements of the active family in $\sigma$ are labeled by decimal numbers of the form $h.n.c$, where $c$ is a color encoding the ascent/descent of that member of the active family.  The buds of $\sigma$ correspond to paired elements in the data sequence and their entries are labeled by pairs $h.n.1$ and $h.n.2$.  Now, assuming that $h$ has length $\ge 2$, let $h=h'.p,$ where $p$ is a number.  The primitive permutation $\sigma'$ corresponding to $h'$ will have in it a bud labeled $(h'.p.1, h'.p.2)=(h.1,h.2).$  We now replace this small parenthesis in $\sigma'$ with the whole parenthesis containing $\sigma$, which as we recall was labeled by $h$ and contained as a root precisely the pair $(h.1,h.2)$.  

The algorithm proceeds as follows.  

The block anchors are the elements $\alpha(i)=i.1.\cdots.1.c$, where $c$ is a color and the number of 1's from the second place on is $\ge 0$.  We cut them off to their first two digits, which will be called $\tilde{\alpha}(i)$ and place them in parentheses, to form a list $$((\tilde{\alpha}(1)),\ldots,(\tilde{\alpha}(m))),$$ where $m$ is the number of blocks.  We call this list the \textbf{parenthesized decimal code.}  

Take now all branch heads of length 1, which label the non singlet blocks.  Their two-digit extensions, as before, encode primitive permutations.  Replace in the parenthesized decimal code the singlet $(\tilde{\alpha}(i))$ with the primitive permutation corresponding to the head $i$.  Continue then with branch heads of length 2 and bigger, inductively by head length.  

Construct the primitive permutation corresponding to each branch node head $h$.  Replace now in the parenthesized decimal code the bud labeled $(h.1,h.2)$ by the whole parenthesis of the primitive permutation built from the head $h$.  We continue to enlarge the parenthesized decimal code until all the branch heads have been used and all the corresponding primitive permutations have been introduced in it.  We now replace each entry in parenthesized decimal code, which is a decimal number, by the number indicating the position of that decimal number in the list which is the given decimal code $\delta$.  We obtain this way the permutation $\pi$ corresponding to $\delta$, in parenthesized form.  Removing the parentheses leaves $\pi$ in line notation.  The parentheses correspond to the nodes which are primitive permutation of the tree structure of $\pi$, and are precisely the primitive permutations encoded in the decimal code $\delta$.  The colors at the end of each decimal number give the ascent/descent/singlet statute of the corresponding permutation value.  It is not hard to see that the two directions of the algorithms matching a permutation with a decimal code are inverse to each other.  The main difficulties are concentrated in the primitive permutation algorithm, which we described in detail.  
\end{proof}
\newpage
\textbf{An example}
The code for the permutation 
$$(\rr{3},\ \rr{16},\ \bb{9},\ \rr{19}\ ,\bb{1},\ \rr{20},\ \bb{4},\ \rr{18},\ \bb{10},\ \rr{13},\ \bb{7},\ \rr{11},\ \rr{14},\ \bb{8},\ \rr{12},\ \bb{6},\ \bb{2},\ \rr{15},\ \rr{17},\ \bb{5})$$
in which the colors indicate ascent/descent values, is 
\begin{eqnarray*}
\begin{array}{rl}
 1 &\ 1.1.\bfb  \\
 2 &\ 2.1.\bfb \\
 3 &\ 1.2.\bfr \\
 4 &\ 2.2.\bfb  \\
 5 &\ 3.1.\bfb  \\
 6 &\ 2.3.1.1.\bfb  \\
 7 &\ 2.3.2.1.\bfb  \\
 8 &\ 2.3.3.\bfb  \\
 9 &\ 1.3.1.\bfb  \\
 10 &\ 2.3.2.2.1.\bfb  \\
 11 &\ 2.3.4.\bfr  \\
 12 &\ 2.3.1.2.\bfr  \\
 13 &\ 2.3.2.3.\bfr  \\
 14 &\ 2.3.5.\bfr  \\
 15 &\ 3.2.\bfr  \\
 16 &\ 1.3.2.\bfr  \\
 17 &\ 3.3.\bfr  \\
 18 &\ 2.3.2.2.2.\bfr  \\
 19 &\ 1.4.\bfr  \\
 20 &\ 2.4.\bfr  \\
\end{array}
\end{eqnarray*}

The block anchors are $\alpha(1)=1.1.\bfb,$ $\alpha(2)=2.1.\bfb,$ $\alpha(3)=3.1.\bfb$.  The parenthesized decimal code starts with the two digit heads of these as 

$$((1.1),\ (2.1),\ (3.1)).$$

The branch heads, which correspond to primitive permutations are obtain by cutting away $\ge 2$ digits at the end of the decimal numbers. They are the following.
\begin{eqnarray*}
\begin{array}{l}
1  \\
2 \\
3  \\
1.3  \\
2.3\\
2.3.1 \\
2.3.2 \\
2.3.2.2\\
\end{array}
\end{eqnarray*}

The decimal numbers on the branch with head $h=2.3$, selected and truncated to the two digits following $h$, are in order 
$$1.1\quad 2.1\quad 3.\bfb\quad 2.2\quad 4.\bfr\quad 1.2\quad 5.\bfr.$$  
In pairing notation, they correspond to $(1,2,3,-2,4,-1,5)$.  The corresponding primitive permutation is $((4,2),5,7,3,(6,1))$, with active family $(\rr{5},\rr{7},\bb{3})$.  The buds are $(4,2)$ and $(6,1)$.  Recalling that the decimal head was $h=2.3$, the first four digits of the decimal code of the numbers in each parenthesis are 

$$((2.3.2.2,\ 2.3.2.1),\ 2.3.4.\bfr,\ 2.3.5.\bfr,\ 2.3.3.\bfb,\ (2.3.1.2,\ 2.3.1.1)).$$
Note that the head $h=2.3$ encodes this whole primitive permutation, and in particular labels its outer parentheses.  The inner buds, and their parentheses are encoded by $2.3.2$ and $2.3.1$, respectively.  The endings $(.2,\ .1)$ in each indicate the members of the bud, which will become the root elements, cofounder and founder, of the branches in which these buds would develop. These emerging branches have heads $2.3.2$ and respectively $2.3.1$.  The branches will be inserted in the parenthesized decimal code, in the place of these buds.

The last two digits of these, which encode the values of the primitive permutation\\
 $((4,2),5,7,3,(6,1))$ are 

$$((2.2,2.1),4.\bfr,5.\bfr,3.\bfb,(1.2,1.1)).$$

In the end, we obtain a parenthesized decimal code which contains all the entries in the given decimal code and starts with $$((1.2.\bfr,\ 1.3.2.\bfr,\ 1.3.1.\bfb,\ 1.4.\bfr,\cdots)).$$

We replace these numbers by their position in the decimal code $\delta$, to obtain the parenthesized 
$$((\rr{3},\ (\rr{16},\ \bb{9}),\ \rr{19}\ ,\bb{1}),\ (\rr{20},\ \bb{4},\ (((\rr{18},\ \bb{10}),\ \rr{13},\ \bb{7}),\ \rr{11},\ \rr{14},\ \bb{8},\ (\rr{12},\ \bb{6})),\ \bb{2}),\ (\rr{15},\ \rr{17},\ \bb{5})).$$
This is precisely our initial permutation.  The branch with head $h=2.3$ which we analyzed before, called there $((4,2),5,7,3,(6,1)),$ have now become 
$$(((\rr{18},\bb{10}),\ \rr{13},\ \bb{7}),\ \rr{11},\ \rr{14},\ \bb{8},(\rr{12},\ \bb{6})),$$
in which the bud $(4,\ 2)$ has expanded to the branch $((\rr{18},\bb{10}),\ \rr{13},\ \bb{7})$.  We keep out of this branch its smallest elements $(10,\ 7)$, which form its root as cofounder and founder.  That branch started in the permutation when $10$ pulled out $7$ from its family to start a new branch.  Replacing that branch by its root, we obtain the primitive node 
$$((10,\ 7),\ \rr{11},\ \rr{14},\ \bb{8},\ (12,\ 6)).$$

The pattern of this node, obtained by reducing the numbers, in order, to the smallest possible values, is 
$$((4,\ 2),\ \rr{5},\ \rr{7},\ \bb{3},\ (6,\ 1)),$$
which is the primitive permutation of the node.  This primitive permutation is inserted into the final parenthesized permutation by the presence of the head $h=2.3$ which labeled it in the decimal code.

The decimal code is shown this way to encode the structure of the permutation $\pi$ by means of the values corresponding to the pairings in each primitive component, together with the way in which the values corresponding to different branches are shuffled with each other.  Other than this, the components of the decimal code are all in natural ascending order.  The order change described by a permutation has been replaced with pairing and shuffling information, of an entirely different nature.

\section{Family statistics: modeling cell biology}

Just like the combinatorics of binomial coefficients, the $2^n$ subsets of $\overline n$ into subsets with the same cardinality, led to the normal distribution and the start of modern probability theory, we expect the map from of each of the $n!$ permutations of $\overline n$ into trees of primitive permutations and into registries of families to open new directions in probability theory. The fact that families only grow when they are very small, but overwhelmingly bud into new branches when they are large, and the fact that each branch grows independently, with branches ``competing'' for new numbers, could be ideal for modeling biological processes. 

The $n$ elements of a line permutation are partitioned into families playing the role of cells with $k$ blue and $l$ red elements each, counting the ascents and descents. As new numbers, modeling food, are introduced into the permutation, cells receive it proportional to their size.

Out of the $k+l$ positions of a $(k,l)$ cell, $1/(k+l)$ (or $2/(k+l)$ in case $k$ is equal to 1), produced cell growth.  The other positions diminish the cell by one  and produce a $(1,1)$ offspring. Thus in early stages cells mostly grow and when they become larger they mostly reproduce.

We have computed the statistical distribution of cells by $(k,l)$ type and the result was remarkably simple, a multinomial with multivariate Gaussian distribution.  

The number of permutations of $\overline n$ with $n_1$ singlets and $n_{(i,j)}$ families with $i$ ascents and $j$ descents is
$$ \frac{n!}{n_1!\prod (i+j)!^{n_{(i,j)}}}\cdot  \frac{(\sum n_{(i,j)})!}{\prod{n_{(i,j)}!}}$$
The configurations are all the sequences of numbers $(n_1,n_{(1,1)},n_{(2,1)},n_{(1,2)},n_{(3,1)},n_{(2,2)},n_{(1,3)},\dots)$ with 
$$n_1+2 n_{(1,1)}+3 (n_{(2,1)}+n_{(1,2)})+4(n_{(3,1)}+n_{(2,2)}+n_{(1,3)})+\dots=n$$
Summed over these the multinomials above add up to $n!$. The reason behind this very simple form of the sum at the end of a very complex process is that our structure theorem gives a bijection between permutations of $\overline n$ and unrestricted compositions of $n$ into families.

Thus a complex cell growth model has its outcome exactly described, in a simple way, by our new structure theory. It opens a gate towards the study of the deeper structure of the model, the possible tree structures arising from it, and corresponding new structures in probability and statistics.

\section{Conclusions}
In this paper, we have developed aspects of the internal structure of permutations, as well as the algebraic structure of multinomials, which count permutations by means of bicolored set partitions.

In shortly forthcoming papers, we shall describe applications of these results in simplicial geometry and in theoretical physics.

\section{Acknowledgments}
This paper started as our student Nick Early insisted on taking typed notes of our presentation of our study of permutations. We expanded with comments and examples all the parts which were not immediately obvious to him. Without his tremendous help, and his insistence that initial stage of the study be put out, this paper could not have reached the publication stage.

We would like to thank our undergraduate students Sean Cotner and Andrew Hanlon who listened patiently and commented during our initial development of this topic.

We would like to thank Ira Gessel for his historical comments on the cycle transform. George Andrews and Alexandre Kirllov, Sr. for useful and encouraging comments.

\begin{center}
\includegraphics[height=.95\textheight]{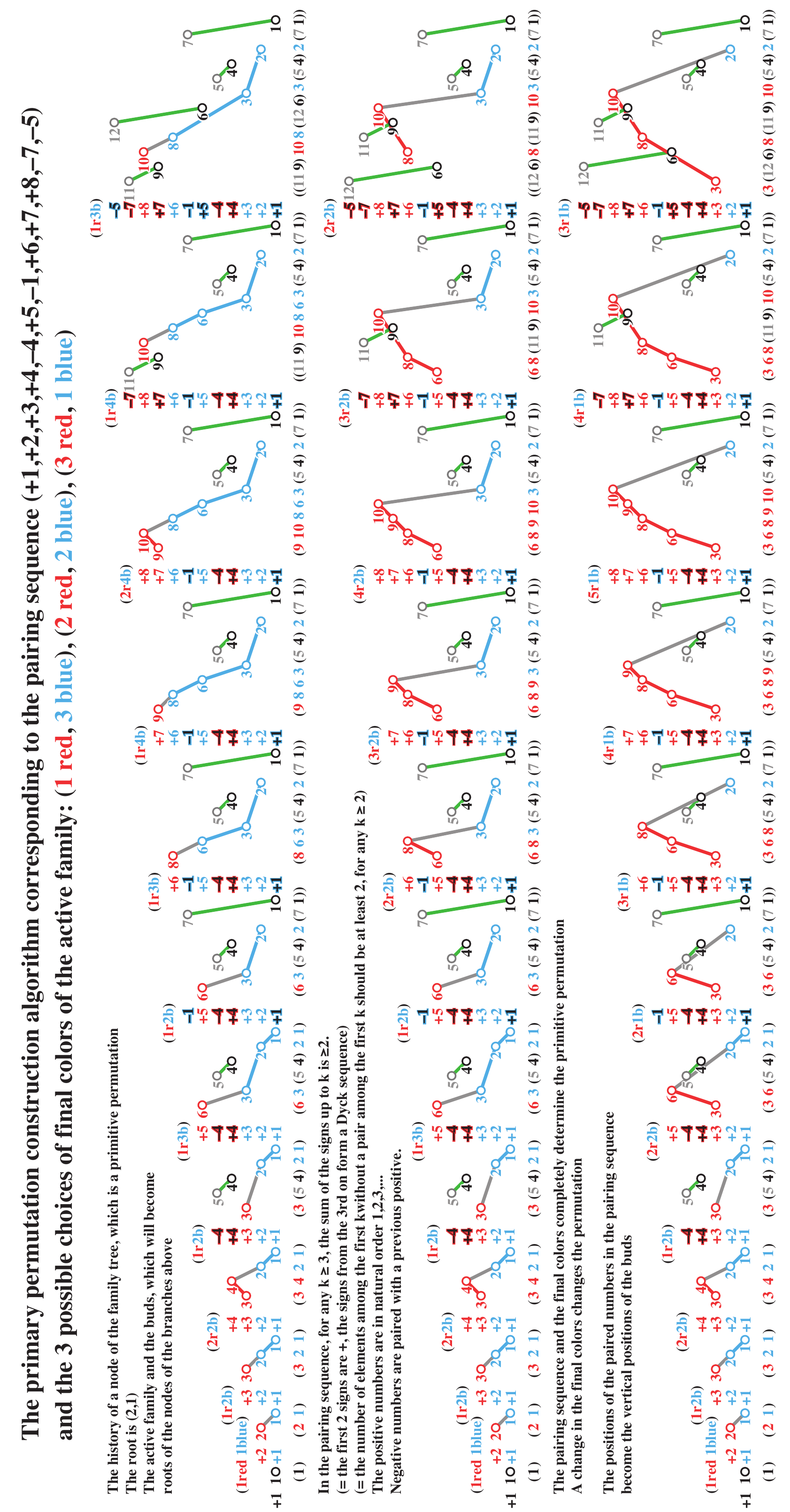}
\end{center}
\begin{center}
\includegraphics[height=.99\textheight]{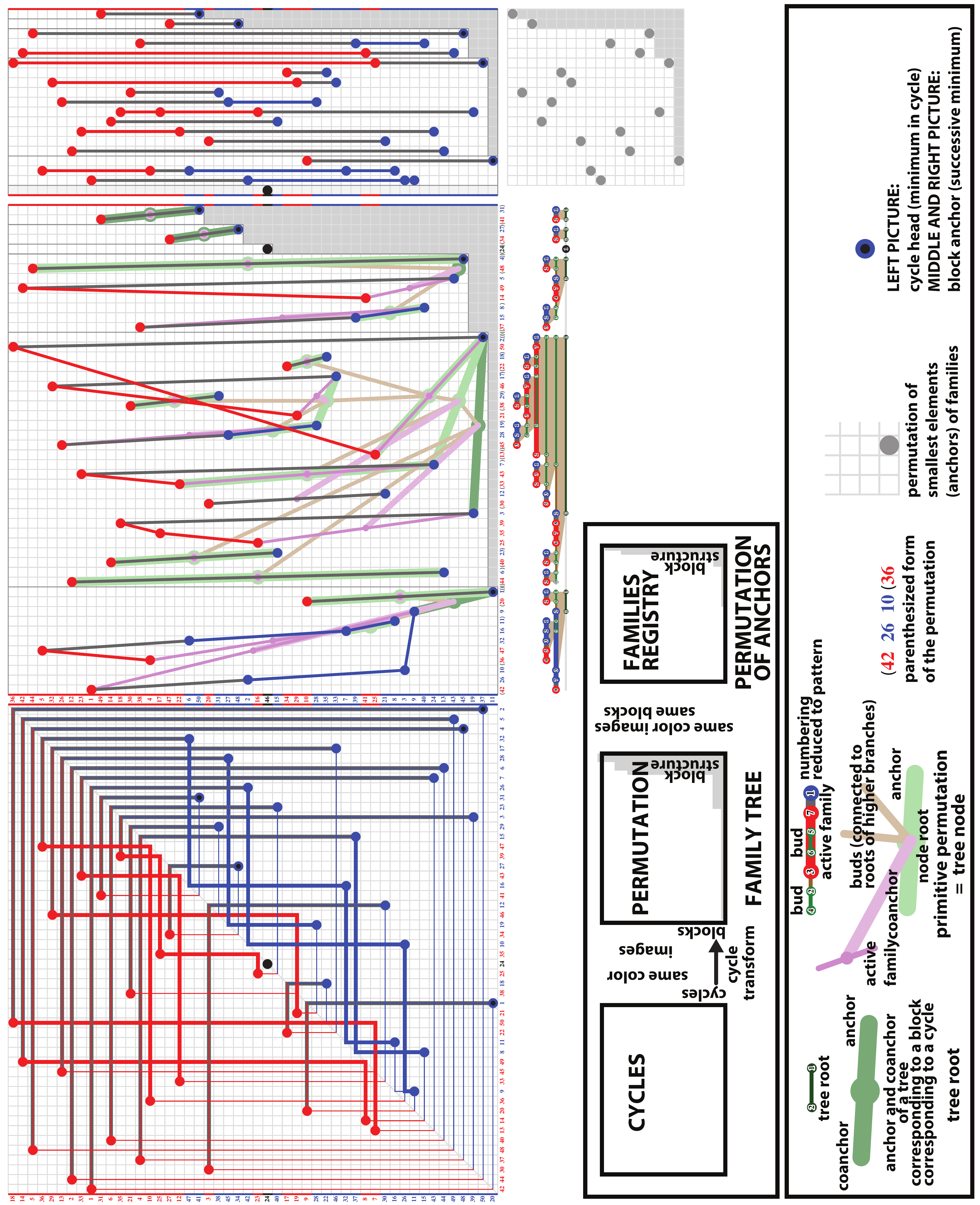}
\end{center}

\nocite{*}

\bibliographystyle{amsplain}

\bibliography{PermutationsPaper}

\end{document}